\numberwithin{equation}{section}
\newtheorem{theorem}[equation]{Theorem}
\newtheorem{proposition}[equation]{Proposition}
\newtheorem{lemma}[equation]{Lemma}
\newtheorem{corollary}[equation]{Corollary}
\theoremstyle{definition}
\newtheorem{definition}[equation]{Definition}
\theoremstyle{remark}
\newtheorem{remark}[equation]{Remark}
\theoremstyle{remark}
\theoremstyle{remark}
\theoremstyle{remark}
\theoremstyle{remark}
\begin{document}
\title{On the deformation of inversive distance circle packings, II}
\author{Huabin Ge, Wenshuai Jiang}
\date{\today}

\maketitle

\begin{abstract}
We show that the results in \cite{Ge-Jiang1} are still true in hyperbolic background geometry setting, that is, the solution to Chow-Luo's combinatorial Ricci flow can always be extended to a solution that exists for all time, furthermore, the extended solution converges exponentially fast if and only if there exists a metric with zero curvature. We also give some results about the range of discrete Gaussian curvatures, which generalize Andreev-Thurston's theorem to some extent.
\end{abstract}

\section{Introduction}\label{Introduction}
\subsection{Background}
We continue our study about the deformation of inversive distance circle patterns on a surface $M$ with triangulation $\mathcal{T}$. If the triangulated surface is obtained by taking a finite collection of Euclidean triangles in $\mathds{E}^2$ and identifying their edges in pairs by isometries, we call it in Euclidean background geometry. While if the triangulated surface is obtained by taking a finite collection of hyperbolic triangles in $\mathds{H}^2$ and identifying their edges in pairs by isometries, we shall call it in hyperbolic background geometry.

Consider a closed surface $M$ with a triangulation $\mathcal{T}=\{V,E,F\}$, where $V,E,F$ represent the sets of vertices, edges and faces respectively. Thurston \cite{T1} once introduced a metric structure on $(M, \mathcal{T})$ by circle patterns. The idea is taking the triangulation as the nerve of a circle pattern, while the length structure on $(M, \mathcal{T})$ can be constructed from radii of circles and intersection angles between circles in the pattern. More precisely, let $\Phi_{ij}\in [0,\frac{\pi}{2}]$ be intersection angles between two circles $c_i$, $c_j$ for each nerve $\{ij\}\in E$, and let $r_i\in(0,+\infty)$ be the radius of circle $c_i$ for each vertex $i\in V$, see Figure \ref{figure-circle-packing}.
One can use the cosine law in $\mathds{E}^2$ or $\mathds{H}^2$, to equip each edge $\{ij\}\in E$ with a length $l_{ij}=\sqrt{r_i^2+r_j^2+2r_ir_j\cos \Phi_{ij}}$ in Euclidean background geometry and $l_{ij}=\cosh^{-1}(\cosh r_i\cosh r_j+\sinh r_i\sinh r_j\cos \Phi_{ij})$ in hyperbolic background geometry. Thurston proved that (\cite{T1}, Lemma 13.7.2), in both two background geometry and for each face $\{ijk\}\in F$, the three lengths $l_{ij}, l_{jk}, l_{ik}$ satisfy the triangle inequalities. This makes each triangle in $F$ isometric to a triangle in $\mathds{E}^2$ in Euclidean background geometry and a triangle in $\mathds{H}^2$ in hyperbolic background geometry. Furthermore, the triangulated surface $(M, \mathcal{T})$ could be taken as gluing many Euclidean (hyperbolic) triangles coherently in Euclidean (hyperbolic) background geometry. Suppose $\theta_i^{jk}$ is the inner angle of the triangle $\{ijk\}$ at the vertex $i$, the classical well-known discrete Gaussian curvature at each vertex $i$ is
\begin{equation}\label{classical Gauss curv}
K_i=2\pi-\sum_{\{ijk\} \in F}\theta_i^{jk},
\end{equation}
and the discrete curvature $K_i$ satisfies the following discrete version of Gauss-Bonnet formula \cite{CL1}:
\begin{equation}\label{Gauss-Bonnet}
\sum_{i=1}^NK_i=2\pi \chi(M)+\lambda Area(M).
\end{equation}
Here $\lambda=0$ in Euclidean background geometry and $\lambda=1$ in hyperbolic background geometry. Andreev-Thurston observed the rigidity property of circle patterns, that is, a circle packing metric is uniquely determined by its curvatures up to Euclidean similarity (hyperbolic isometry) in Euclidean (hyperbolic) background geometry, see \cite{Andreev2, Andreev1, T1, Marden-Rodin, Colindev} for a proof. In the pioneered work of \cite{CL1}, Chow and Luo first established an intrinsic connection between Thurston's circle patterns and surface Ricci flow. They introduced the so called combinatorial Ricci flows,
\begin{equation}\label{Def-ChowLuo-flow-euclid}
{r_i}'(t)=-K_ir_i
\end{equation}
in Euclidean background geometry and
\begin{equation}\label{Def-ChowLuo-flow-hyper}
{r_i}'(t)=-K_i\sinh r_i
\end{equation}
in hyperbolic background geometry. These discrete flows are analog of Hamilton's Ricci flow in the combinatorial setting. Chow and Luo proved that the solutions to combinatorial Ricci flows exist for all time. Moreover, the solutions converges (to a circle pattern with constant curvature)
if and only if there exists a metric of constant curvature. As a consequence, they obtained a new proof of Thurston's existence of circle packing theorem and a new algorithm to deform circle patterns.
\begin{figure}\label{figure-circle-packing}
\begin{minipage}[t]{0.48\linewidth}
\centering
\includegraphics[width=0.75\textwidth]{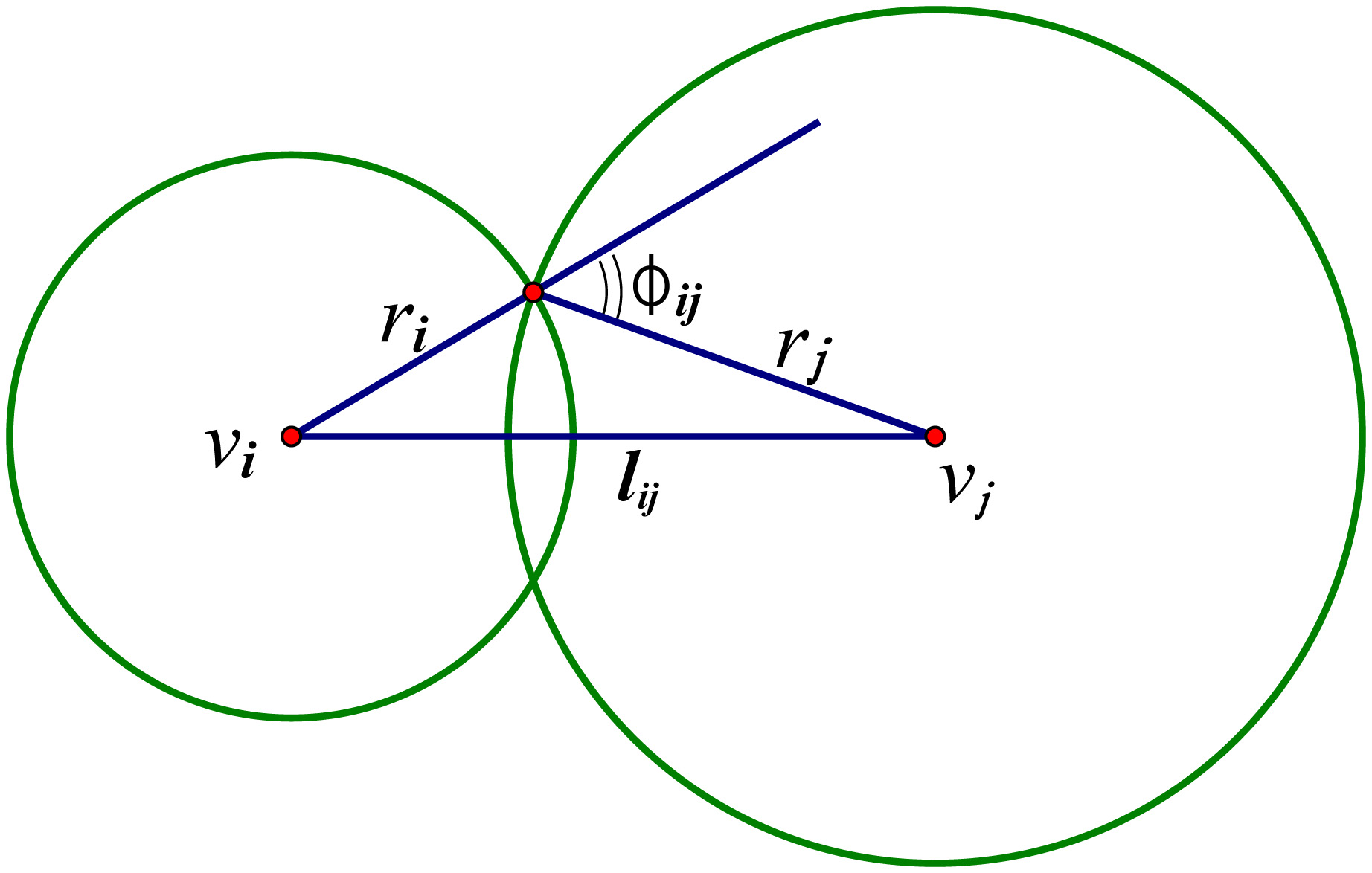}
\caption{circle packing metric}
\end{minipage}
\end{figure}

\subsection{Inversive distance circle packing metric}\label{ICP-metric}
However, Andreev and Thurston's circle patterns require adjacent circles intersect with each other, which is too restrictive. Hence Bowers and Stephenson \cite{Bowers-Stephenson} introduced inversive distance circle packing, which allow adjacent circles to be disjoint and measure their relative positions by the inversive distance. Consider two circles $c_1$, $c_2$ with radii $r_1$, $r_2$ respectively, and assume that $c_1$ does not contain $c_2$ and vice versa. If the distance between their center is $l_{12}$, then the inversive distance between $c_1$, $c_2$ is given by formula
\begin{equation}\label{formula-inver-dist-euclid}
I(c_1, c_2)=\cfrac{l_{12}^2-r_1^2-r_2^2}{2r_1r_2}.
\end{equation}
in Euclidean background geometry and formula
\begin{equation}\label{formula-inver-dist-hyper}
I(c_1, c_2)=\cfrac{\cosh l_{12}-\cosh r_1\cosh r_2}{\sinh r_1\sinh r_2}.
\end{equation}
in hyperbolic background geometry. These two formulas are related by a stereographic projection (\cite{Guoren}). The notion of inversive distance generalizes the notion of intersection angle of two circles. Actually, note that $l_{12}> |r_1-r_2|$, we have $-1<I(c_1, c_2)< +\infty$. The inverse distance $I(c_1, c_2)$ describes the relative positions of $c_1$ and $c_2$. When $I(c_1, c_2)\in (-1, 0)$, the circles $c_1$, $c_2$ intersect with an intersection angle $\arccos I(c_1, c_2)\in (\frac{\pi}{2}, \pi)$. When $I(c_1, c_2)\in [0, 1]$, the circles $c_1$, $c_2$ intersect with an intersection angle $\arccos I(c_1, c_2)\in [0, \frac{\pi}{2}]$. When $I(c_1, c_2)\in (1, +\infty)$, the circles $c_1$, $c_2$ are separated.

Now we reformulate Bowers and Stephenson's construction of an inversive distance circle packing with prescribed inversive distance $I$ on triangulated surface $(M, \mathcal{T})$. Consider $I$ as a function defined on all edges; that is $I: E\rightarrow (-1, +\infty)$, and we call $I$ the inversive distance. For every given radius vector $r\in \mathds{R}^N_{>0}$, we equip each edge $\{ij\}\in E$ with a length
\begin{equation}\label{Def-edge-length}
l_{ij}=\sqrt{r_i^2+r_j^2+2r_ir_jI_{ij}}
\end{equation}
in Euclidean background geometry and length
\begin{equation}\label{formula-inver-dist-hyper}
l_{ij}=\cosh^{-1}(\cosh r_i\cosh r_j+I_{ij}\sinh r_i\sinh r_j)
\end{equation}
in hyperbolic background geometry. We should be careful that for $\{ijk\}\in F$, $l_{ij}, l_{jk}, l_{ik}$ may not satisfy the triangle inequalities any more in the inversive distance setting, which is quite different from Thurston's observation, i.e., Lemma 13.7.2 in \cite{T1}. Denote
\begin{equation}
\Omega=\Big\{r\in \mathds{R}^N_{>0}\;\big|\;l_{ij}+l_{jk}>l_{ik},\;l_{ij}+l_{ik}>l_{jk},\;l_{ik}+l_{jk}>l_{ij}, \;\forall \;\{ijk\}\in F\Big\}
\end{equation}
as the space of all possible inversive distance circle packing metrics.

\subsection{Our main results}
Bowers-Stephenson's relaxation of intersection condition is very useful for practical applications, especially in medical imaging and computer graphics fields, see Bowers-Hurdal \cite{Bowers-Hurdal} and Hurdal-Stephenson \cite{Hurdal-Stephenson} for example. Based on extensive numerical evidences, Bowers and Stephenson \cite{Bowers-Stephenson} conjectured the rigidity and convergence of inversive distance circle packings. Hence it is quite necessary to generalize Andreev-Thurston's ridigity property to inversive distance circle pattern setting. Guo \cite{Guoren} first proved that Bowers-Stephenson＊s conjecture of rigidity is locally true by complicated computations. Luo \cite{Luo1} proved Bowers-Stephenson＊s conjecture of rigidity eventually. Their results say that a inversive distance circle packing metric is uniquely determined by its curvatures (up to Euclidean similarity or hyperbolic isometry). More precisely, the curvature map $K:\Omega\rightarrow \mathds{R}^N$, $r\mapsto K$ is injective in hyperbolic background geometry and is injective up to scaling in Euclidean background geometry. It is also necessary to generalize Chow-Luo's combinatorial Ricci flow to inversive distance circle pattern setting. In fact, in Euclidean background geometry, we \cite{Ge-Jiang1} had extended the solution to Chow-Luo's combinatorial Ricci flow so as it exists for all time and converges exponentially fast to a circle pattern with constant cone angles. In this paper, we shall do the same thing in hyperbolic background geometry. Combine Proposition \ref{Prop-maxm-exists-time} and Theorem \ref{thm-long-term-existence-extended}, we get
\begin{theorem}[Extendable flow]
Given a triangulated surface $(M, \mathcal{T})$ with inversive distance $I\geq 0$ in hyperbolic background geometry. For any initial value $r(0)\in \Omega$, the solution $\{r(t)|0\leq t<T\}\subset\Omega$ to flow (\ref{Def-ChowLuo-flow-hyper}) uniquely exists on a maximal time interval $t\in[0,T)$ with $0<T\leq +\infty$. Furthermore, we can always extend the solution to a new solution $\{r(t)|t\in[0,+\infty)\}$ whenever $T<+\infty$.
\end{theorem}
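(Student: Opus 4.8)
The plan is to prove the two halves separately, exactly as the statement indicates: short-time existence and uniqueness of a maximal solution (this is Proposition \ref{Prop-maxm-exists-time}), followed by an extension argument that pushes the solution past every finite escape time (this is Theorem \ref{thm-long-term-existence-extended}). For the first half I would observe that on $\Omega$ each edge length $l_{ij}$ is a real-analytic function of $r$, and that on $\Omega$ every face $\{ijk\}\in F$ is a nondegenerate hyperbolic triangle, so the inner angles $\theta_i^{jk}$, and hence the curvatures $K_i=2\pi-\sum_{\{ijk\}\in F}\theta_i^{jk}$, are real-analytic in $r$ via the hyperbolic cosine law. Consequently the vector field $r\mapsto(-K_i\sinh r_i)_i$ on the right-hand side of (\ref{Def-ChowLuo-flow-hyper}) is locally Lipschitz on the open set $\Omega$, and the Picard--Lindel\"of theorem yields a unique solution on a maximal interval $[0,T)$ with $0<T\le+\infty$.

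The heart of the matter is the extension when $T<+\infty$. The obstruction to continuing is that the trajectory may approach $\partial\Omega$, where some face degenerates and the angles cease to be defined by the cosine law. The remedy is to replace $K$ by a globally defined continuous extension $\widetilde K:\mathds{R}^N_{>0}\to\mathds{R}^N$: following the extension of the angle functions as in Luo \cite{Luo1}, whenever the three lengths of a face violate the triangle inequality one sets the angle opposite the longest edge equal to $\pi$ and the remaining two equal to $0$, and one checks that the resulting extended angles are continuous across $\partial\Omega$ (this is where the hypothesis $I\ge0$ enters, guaranteeing well-behaved limiting configurations). The extended flow $\dot r_i=-\widetilde K_i\sinh r_i$ then has a continuous right-hand side on all of $\mathds{R}^N_{>0}$, agrees with (\ref{Def-ChowLuo-flow-hyper}) on $\Omega$, and by Peano's theorem admits a solution continuing the original one; it remains only to show this extended solution never leaves $\mathds{R}^N_{>0}$ in finite time.

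For the a priori bounds I would pass to the coordinates $u_i=\ln\tanh(r_i/2)\in(-\infty,0)$, for which a direct computation gives $du_i/dr_i=1/\sinh r_i$ and therefore transforms the flow into the clean form $\dot u_i=-\widetilde K_i$. Since each extended angle lies in $[0,\pi]$, the extended curvature is uniformly bounded, $2\pi-\pi d_i\le\widetilde K_i\le2\pi$ where $d_i$ is the degree of vertex $i$; hence $|\dot u_i|$ is bounded, $u_i(t)$ cannot reach $-\infty$ in finite time, and $r_i(t)$ stays bounded away from $0$ on every finite interval. For the upper bound I would use the genuinely hyperbolic fact that as $r_i\to+\infty$ every face at $i$ becomes thin at $i$, so $\theta_i^{jk}\to0$ and thus $\widetilde K_i\to2\pi>0$ (unlike the Euclidean setting, this requires no normalization, since in $\mathds{H}^2$ the faces grow genuinely large rather than merely rescaling). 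This makes $\dot u_i=-\widetilde K_i$ strictly negative near $u_i=0$, so $u_i$ is repelled from $0$ and $r_i(t)$ is bounded above. Together these two estimates confine the trajectory to a compact subset of $\mathds{R}^N_{>0}$ on any finite interval, so the extended solution exists for all $t\in[0,+\infty)$.

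The main obstacle I anticipate is not the ODE bookkeeping but the two geometric inputs: the continuity of the extended curvature across the degenerate locus $\partial\Omega$, and the uniform-in-neighbors verification that $\theta_i^{jk}\to0$ as $r_i\to\infty$ irrespective of whether the radii $r_j,r_k$ remain bounded or also blow up. Once these are secured, the passage to the variable $u_i$ and the confinement argument are routine, and combining Proposition \ref{Prop-maxm-exists-time} with Theorem \ref{thm-long-term-existence-extended} gives the stated all-time existence.
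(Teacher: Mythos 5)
Your proposal is correct and follows essentially the same route as the paper: Picard--Lindel\"of in the coordinates $u_i=\ln\tanh(r_i/2)$ for the maximal solution, Luo's continuous extension of the angles to define $\widetilde K$ on all of $\mathds{R}^N_{>0}$, the uniform bound $|\widetilde K_i|\le\pi d_i$ to keep $r_i$ away from $0$ in finite time, and the ``angle at a huge circle is small, hence $\widetilde K_i>0$'' repulsion argument for the upper bound. Your explicit appeal to Peano's theorem for the merely continuous extended vector field is a small point of extra care that the paper glosses over (and is consistent with its later remark that uniqueness of the extended solution is open).
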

In the proof, we first follow the idea from Bobenko-Pincall-Springborn \cite{Bobenko} and Luo \cite{Luo1} to extend the definition of curvature $K$ so as it defined on the whole space $\mathds{R}^N_{>0}$. Then we consider the extension of flow equation (\ref{Def-ChowLuo-flow-hyper}). Using the tricks developed by Ge-Xu \cite{Ge-Xu2, Ge-Xu4} and Ge-Jiang \cite{Ge-Jiang1}, we finally obtain the extended solution $\{r(t)|0\leq t< +\infty\}$. By deep analysis into Guo-Luo's combinatorial Ricci potential, we generalize Andreev-Thurston and Guo-Luo's rigidity of inversive distance circle patterns to
\begin{theorem}\label{corollary-unique-K-bar}
Any curvature $\bar{K}\in K(\Omega)$ is realized by an unique metric $\bar{r}$ in the extended space $\mathds{R}^N_{>0}$. As a consequence, if there exists a metric of zero curvature in $\Omega$, then it exists uniquely in $\mathds{R}^N_{>0}$.
\end{theorem}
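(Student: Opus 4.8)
The plan is to recast the statement as a problem in convex analysis, exploiting the variational structure behind the discrete curvature. First I would pass to Guo--Luo's coordinates $u_i=\log\tanh(r_i/2)$, which give a diffeomorphism $\mathds{R}^N_{>0}\cong(-\infty,0)^N$ carrying $\Omega$ onto an open set $\Omega_u$. In these coordinates the one-form $\sum_i K_i\,du_i$ is closed, because the combinatorial Schläfli symmetry gives $\partial K_i/\partial u_j=\partial K_j/\partial u_i$; hence it integrates to the combinatorial Ricci potential $\mathcal{E}$, normalized so that $\nabla_u\mathcal{E}=K$ and $\Hess_u\mathcal{E}=\partial K/\partial u$. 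By Guo's local rigidity \cite{Guoren} this Hessian is positive definite on $\Omega_u$ in the hyperbolic setting, so $\mathcal{E}$ is strictly convex there. The extension of the curvature $K$ to all of $\mathds{R}^N_{>0}$ constructed earlier in this paper corresponds, face by face, to the single-triangle energy extension of Bobenko--Pinkall--Springborn \cite{Bobenko} and Luo \cite{Luo1}, and I would record that it produces a $C^1$ extension $\widetilde{\mathcal{E}}$ of $\mathcal{E}$ to the whole octant $(-\infty,0)^N$ with $\nabla_u\widetilde{\mathcal{E}}=\widetilde{K}$, the extended curvature.

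The key structural step, and the main obstacle, is to show that $\widetilde{\mathcal{E}}$ is globally convex on $(-\infty,0)^N$ while remaining strictly convex on $\Omega_u$. For global convexity it suffices to treat each face separately: on the admissible region of a single triangle the per-face potential is convex by the sign of the angle Jacobian $\partial(\theta_i,\theta_j,\theta_k)/\partial(u_i,u_j,u_k)$, and outside that region the extension is by the constant (degenerate) angle values, arranged precisely so that the extended per-face potential stays $C^1$ and convex across the boundary where the triangle degenerates. Assembling the faces and adding the linear term then yields a $C^1$ convex $\widetilde{\mathcal{E}}$, strictly convex on $\Omega_u$ by the previous paragraph. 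Verifying this $C^1$-matching of values and first derivatives along $\partial\Omega$, together with the preservation of convexity, is where the real analytic work lies; everything after it is soft.

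Granting this, I would finish by a convexity argument. Let $\bar K\in K(\Omega)$, so there is $\bar u\in\Omega_u$ with $\nabla_u\widetilde{\mathcal{E}}(\bar u)=\bar K$; this already exhibits the realizing metric $\bar r=r(\bar u)\in\Omega\subset\mathds{R}^N_{>0}$, so only uniqueness remains. Suppose $u'\in(-\infty,0)^N$ also satisfies $\nabla_u\widetilde{\mathcal{E}}(u')=\bar K$, and set $g(t)=\widetilde{\mathcal{E}}\big((1-t)\bar u+t u'\big)$ for $t\in[0,1]$. Then $g$ is convex with $g'(0)=\langle \bar K,\,u'-\bar u\rangle=g'(1)$, so $g'$ is constant and $g$ is affine on $[0,1]$. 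If $u'\neq\bar u$, then for small $t>0$ the point $(1-t)\bar u+tu'$ lies in the open set $\Omega_u$, where strict convexity forces $g''(t)=(u'-\bar u)^{\top}\Hess_u\widetilde{\mathcal{E}}\,(u'-\bar u)>0$, contradicting that $g$ is affine. Hence $u'=\bar u$, and $\bar r$ is the unique metric in $\mathds{R}^N_{>0}$ realizing $\bar K$. The stated consequence is then immediate: if some $r^{\ast}\in\Omega$ has $K(r^{\ast})=0$, then $0\in K(\Omega)$, and the uniqueness just proved shows $r^{\ast}$ is the only metric in $\mathds{R}^N_{>0}$ of zero extended curvature.
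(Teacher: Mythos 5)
Your proposal is correct and follows essentially the same route as the paper: both rely on Luo's $C^1$ convex extension of the per-triangle potential (the paper's Lemma \ref{lemma-luo-essential}), the positive definiteness of the Hessian on $\ln\tanh\frac{\Omega}{2}$ from Guo's local rigidity, and a one-dimensional convexity argument to show the critical point of the (prescribed) extended Ricci potential is unique. The only cosmetic difference is that you argue along the segment joining two putative critical points, while the paper argues that $\varphi_{\xi}'(t)$ has $t=0$ as its unique zero along every ray from the known solution; these are interchangeable.
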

Note the ``consequence" part of above theorem is restated as Theorem \ref{thm-unique-zero-curvature}, while the general part of above theorem is restated as Theorem \ref{thm-unique-admissible-curvature}. As to the long tern convergence behavior of flow (\ref{Def-ChowLuo-flow-hyper}), we have
\begin{theorem}[Convergence of extended flow]\label{Thm-converge-introduction}
Given a triangulated surface $(M, \mathcal{T})$ with inversive distance $I\geq 0$ in hyperbolic background geometry. Then there exists a metric of zero curvature $r^*\in\Omega$ on $M$ if and only if $r(t)$ can always be extended to a convergent solution. Furthermore, if the extended solution converges, then it converges exponentially fast to $r^*$ as $t\rightarrow+\infty$.
\end{theorem}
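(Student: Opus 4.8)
The plan is to prove Theorem \ref{Thm-converge-introduction} by combining the long-time existence already secured by the Extendable Flow theorem with a standard gradient-flow/variational analysis built on Guo-Luo's combinatorial Ricci potential. First I would recast the extended flow (\ref{Def-ChowLuo-flow-hyper}) as a negative gradient flow of a suitable convex energy. In hyperbolic background geometry one introduces the change of variable $u_i = \log\tanh(r_i/2)$ (or a comparable coordinate that renders the Jacobian $\partial K/\partial u$ symmetric); by the extension procedure of Bobenko-Pincall-Springborn and Luo the curvature map $K$ extends $C^1$-smoothly to all of $\mathds{R}^N_{>0}$, and there is a globally defined $C^2$ potential $\mathcal{E}(u)$ whose gradient is $K$. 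The key structural input is \emph{convexity}: the Hessian of $\mathcal{E}$ is positive semidefinite with kernel only in the trivial scaling direction (and in the hyperbolic case, the rigidity of Theorem \ref{corollary-unique-K-bar} upgrades this to strict convexity on the relevant slice), so that $\mathcal{E}$ is a strictly convex Lyapunov function for the flow.

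For the easy direction, suppose the extended solution converges, say $r(t)\to r^\infty$. Then $r'(t)\to 0$, and since $\sinh r_i^\infty>0$, the flow equation forces $K_i(r^\infty)=0$ for every $i$; one must then check that the limit $r^\infty$ in fact lies in the genuine metric space $\Omega$ rather than on its boundary, which follows because a zero-curvature configuration has all inner angles summing to $2\pi$ at each vertex, precluding any degenerate (non-triangle-inequality) faces. This yields the desired $r^*=r^\infty\in\Omega$ with zero curvature.

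For the converse and the exponential rate, assume $r^*\in\Omega$ has $K(r^*)=0$. By the uniqueness in Theorem \ref{corollary-unique-K-bar}, $r^*$ is the unique critical point of $\mathcal{E}$ in $\mathds{R}^N_{>0}$ and hence its strict global minimizer on the slice transverse to the scaling direction. Along the extended flow, $\frac{d}{dt}\mathcal{E}(u(t)) = -\sum_i K_i^2\,(\text{positive factor}) \le 0$, so $\mathcal{E}$ is nonincreasing and bounded below; the orbit stays precompact (the long-time solution cannot escape to infinity because the strictly convex coercive potential would otherwise blow up), and LaSalle's invariance principle identifies the $\omega$-limit set with the critical set $\{K=0\}=\{r^*\}$. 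Thus $r(t)\to r^*$. For the rate, I would linearize at $r^*$: since the Hessian $L = \partial K/\partial u$ at $r^*$ is symmetric and positive definite (strict convexity), the linearized flow $\dot{u}\approx -\Sigma L\,(u-u^*)$ with the positive diagonal $\Sigma=\mathrm{diag}(\sinh r_i)$ has all eigenvalues with negative real part bounded away from zero; a standard Lyapunov-stability argument (or directly differentiating $\|u-u^*\|^2$ and using the spectral gap) gives $\|r(t)-r^*\|\le Ce^{-\delta t}$ for some $\delta>0$.

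The main obstacle I anticipate is establishing strict convexity of $\mathcal{E}$ (equivalently, positive definiteness of the extended Hessian $\partial K/\partial u$) on the whole extended domain $\mathds{R}^N_{>0}$, not merely on $\Omega$; the $C^1$ extension of $K$ across $\partial\Omega$ need not preserve positive definiteness automatically, so one must invoke the delicate analysis of Guo-Luo's potential together with the global rigidity of Theorem \ref{corollary-unique-K-bar} to rule out degeneracy. A secondary technical point is confirming precompactness of the orbit and that the limit does not drift off to the boundary at infinity; here the coercivity of $\mathcal{E}$ modulo scaling, combined with the hyperbolic-specific absence of a scaling symmetry (the Gauss-Bonnet term $\lambda\,\mathrm{Area}(M)$ with $\lambda=1$ pins down the scale), should close the argument.
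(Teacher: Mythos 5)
Your overall architecture --- view the extended flow as the negative gradient flow of the extended Guo--Luo potential, use coercivity to get precompactness, identify the limit with the unique zero-curvature metric, then linearize at $r^*$ for the exponential rate --- is the same as the paper's. However, two of your structural claims are false, and the step you flag as the ``main obstacle'' is resolved in the wrong way. The extended curvature $\widetilde K$ is only continuous, not $C^1$, across the ``triangle inequality invalid boundary'' (the paper notes $\partial\theta_i^{jk}/\partial r_j=\infty$ there), so the extended potential $\widetilde F$ is only $C^1$ on $\mathds{R}^N_{<0}$ and merely convex there; its Hessian is positive definite only on $\ln\tanh\frac{\Omega}{2}$ (Lemma \ref{prop-hessF}). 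Global strict convexity of $\widetilde F$ on the extended domain cannot be established and is not needed: Proposition \ref{Prop-F-tura-infinity} derives coercivity from global convexity together with strict convexity on a small ball around $u^*$ inside $\ln\tanh\frac{\Omega}{2}$ --- along each ray from $u^*$ the directional derivative of $\widetilde F$ is nondecreasing and becomes strictly positive after a fixed small time $\delta$, so $\widetilde F\to+\infty$ along every ray of infinite length, and an elementary lemma upgrades this to $\widetilde F(u)\to+\infty$ as $\|u\|\to\infty$. The same one-variable argument, not global strict convexity, gives uniqueness of the zero of $\widetilde K$ (Theorem \ref{thm-unique-zero-curvature}).

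Two further gaps. (i) Coercivity of $\widetilde F$ alone does not give precompactness of the orbit: in the coordinates $u_i=\ln\tanh\frac{r_i}{2}$, the degeneration $r_i\to 0$ corresponds to $\|u\|\to\infty$ and is excluded by the monotone decrease of $\widetilde F$, but $r_i\to+\infty$ corresponds to $u_i\to 0^-$, a \emph{finite} boundary point of $\mathds{R}^N_{<0}$ reached by rays of finite length $a_\xi$, where $\widetilde F$ need not blow up. The paper rules this out by a separate geometric argument (Lemma \ref{Lemma-edge-big-then-angle-zero} and Proposition \ref{Prop-uniform-up-bound}): once $r_i$ is large, all generalized angles at $i$ are small, so $\widetilde K_i>\pi$ and $r_i$ decreases. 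Your proposal omits this mechanism. (ii) In the easy direction, your argument that the limit must lie in $\Omega$ ``because zero curvature forces the angles at each vertex to sum to $2\pi$, precluding degenerate faces'' fails: degenerate faces contribute generalized angles equal to $0$ or $\pi$, and such angles can perfectly well sum to $2\pi$ around a vertex. The paper sidesteps this by formulating the converse for solutions that converge to a point of $\Omega$ (Theorem \ref{Thm-0curv-metric-exist-imply converg}); if you want the stronger statement for the extended flow you need a genuine additional argument, not the angle-sum remark.
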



The paper is organized as follows. In section \ref{section-basic}, we study some basic properties of flow \ref{Def-ChowLuo-flow-hyper}), it contains uniqueness, uniform lower or upper bound, and local convergence behavior. In section \ref{section-extend-flow}, we study the extended flow. Using Luo's extension of locally convex functional, we then prove the extended combinatorial Ricci potential is proper and get the convergence results related to the extended flow. We also introduce a prescribed flow to deform inversive distance circle pattern to a pattern with admissible curvatures. In section \ref{section-degenerate-pattern}, we study the degeneration of inversive distance circle patterns. We also give a necessary condition, expressed by a system of combinatorial and topological obstructions, for the existance of zero curvature metric. We further analysis the effect on the existence of zero curvature metrics due to a deficiency of maximum principle. In section \ref{section-open-question}, we state some open questions related to this subject.

\section{Basic properties of flow (\ref{Def-ChowLuo-flow-hyper})}\label{section-basic}
In this section, we state some basic properties of flow (\ref{Def-ChowLuo-flow-hyper}). Set $u_i=\ln \tanh \frac{r_i}{2}$, then
$du_i=\frac{1}{\sinh r_i}dr_i$. Note $r\mapsto u$ is a coordinate change, sending $r\in\mathds{R}^N_{>0}$ homeomorphically to $u\in\mathds{R}^N_{<0}$. Using this coordinate change, we can write flow (\ref{Def-ChowLuo-flow-hyper}) to an autonomous ODE system
\begin{equation}\label{Def-hyper-flow-u}
\begin{cases}
{u_i}'(t)=-K_i\\
\,u(0)\in \ln\tanh\frac{\Omega}{2}
\end{cases}.
\end{equation}
where $\ln\tanh\frac{\Omega}{2}\triangleq\left\{\big(\ln \tanh \frac{r_1}{2},\cdots,\ln\tanh\frac{r_N}{2}\big)|r\in \Omega\right\}$
is the space of all meaningful inversive circle packing metrics expressed within $u$-coordinate. Since it is no difference between flow (\ref{Def-ChowLuo-flow-hyper}) and flow (\ref{Def-hyper-flow-u}), we will not distinguish them in the following.

Note that, in $\ln\tanh \frac{\Omega}{2}$, $K_i$ as a function of $u=(u_1,\cdots,u_N)^T$ is smooth and hence locally Lipschitz continuous. By Picard theorem in classical ODE theory, flow (\ref{Def-hyper-flow-u}) has a unique solution $u(t)$, $t\in[0, \epsilon)$ for some $\epsilon>0$. As a consequence, we have
\begin{proposition}\label{Prop-maxm-exists-time}
Given a triangulated surface $(M, \mathcal{T})$ with inversive distance $I\geq 0$ in hyperbolic background geometry. For any initial value $r(0)\in \Omega$, the solution $\{r(t)|0\leq t<T\}\subset\Omega$ to flow (\ref{Def-ChowLuo-flow-hyper}) uniquely exists on a maximal time interval $t\in[0,T)$ with $0<T\leq +\infty$.
\end{proposition}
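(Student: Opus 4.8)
The plan is to transfer the entire question to the autonomous system (\ref{Def-hyper-flow-u}) in the $u$-coordinate and then invoke the classical existence--uniqueness--continuation machinery for ordinary differential equations. The decisive gain from the substitution $u_i=\ln\tanh\frac{r_i}{2}$ is that the right-hand side $-K$ of (\ref{Def-hyper-flow-u}) no longer depends explicitly on $t$; the whole matter reduces to understanding the vector field $u\mapsto -K(u)$ on the open set $\ln\tanh\frac{\Omega}{2}\subset\mathds{R}^N_{<0}$, since (as already noted) there is no difference between flow (\ref{Def-ChowLuo-flow-hyper}) and flow (\ref{Def-hyper-flow-u}).

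The first step is to verify that this vector field is smooth on $\ln\tanh\frac{\Omega}{2}$. I would argue as follows: the map $r\mapsto u$ is a diffeomorphism of $\mathds{R}^N_{>0}$ onto $\mathds{R}^N_{<0}$, so it suffices to check that $K$ is smooth in $r$ on $\Omega$. By the very definition of $\Omega$, for every face $\{ijk\}\in F$ the three lengths $l_{ij},l_{jk},l_{ik}$ produced by the hyperbolic length formula satisfy strict triangle inequalities, so each such triangle is a genuine nondegenerate hyperbolic triangle. The hyperbolic cosine law then expresses each inner angle $\theta_i^{jk}$ as a smooth (indeed real-analytic) function of the three side lengths on this nondegenerate range, and the side lengths are themselves smooth in $r$; hence each $K_i=2\pi-\sum_{\{ijk\}\in F}\theta_i^{jk}$ is smooth on $\Omega$, and correspondingly $K$ is smooth on $\ln\tanh\frac{\Omega}{2}$. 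In particular it is locally Lipschitz there. Note also that $\Omega$ is open, being cut out by finitely many strict inequalities between continuous functions of $r$.

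With smoothness in hand, the classical Picard existence and uniqueness theorem yields, for the initial value $u(0)\in\ln\tanh\frac{\Omega}{2}$, a unique solution $u(t)$ on some interval $[0,\epsilon)$ with $\epsilon>0$. The final step is the standard continuation argument: one forms the union of all intervals $[0,a)$ carrying a solution that remains in the open set $\ln\tanh\frac{\Omega}{2}$, obtaining a maximal solution on some $[0,T)$ with $0<T\le+\infty$; uniqueness on overlaps guarantees that the local pieces agree wherever they are both defined, so this maximal solution is itself unique. Translating back through the homeomorphism $u\mapsto r$ delivers the asserted unique maximal solution $\{r(t)\mid 0\le t<T\}\subset\Omega$ of flow (\ref{Def-ChowLuo-flow-hyper}).

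Since everything above is classical once smoothness is established, there is no serious obstacle here; the only point deserving genuine care is the smoothness (equivalently, local Lipschitz) claim for $K$, which rests squarely on the openness of $\Omega$ and the nondegeneracy of every triangle for $r\in\Omega$. I emphasize that this proposition asserts nothing about whether $T=+\infty$: it records only local well-posedness and the existence of a maximal interval, deferring the substantive questions of long-time existence, extension past $T$, and convergence to the later sections.
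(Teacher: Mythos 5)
Your proposal is correct and follows essentially the same route as the paper, which likewise passes to the autonomous $u$-coordinate system, notes that $K$ is smooth (hence locally Lipschitz) on the open set $\ln\tanh\frac{\Omega}{2}$, and invokes the Picard theorem together with the standard maximal-continuation argument. Your write-up merely makes explicit some details (openness of $\Omega$, smoothness of the cosine law on nondegenerate triangles) that the paper leaves implicit.
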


Let $\{r(t)|0\leq t<T\}$ be the unique solution to flow (\ref{Def-ChowLuo-flow-hyper}) on a right maximal time interval $[0, T)$ with $0<T\leq +\infty$. Intuitively, $r(t)$ touches the boundary of $\Omega$ as $t\uparrow T$. Roughly speaking, the boundary of $\Omega$ can be classified into three types. The first type is ``$0$ boundary", $r(t)$ touches the ``$0$ boundary" means that there exists a sequence of time $t_n\uparrow T$ and a vertex $i\in V$ so that $r_i(t_n)\rightarrow 0$. The second type is ``$+\infty$ boundary", $r(t)$ touches the ``$+\infty$ boundary" means that there exists $t_n\uparrow T$ and $i\in V$ so that $r_i(t_n)\rightarrow +\infty$. The last type is ``triangle inequality invalid boundary", for this case, there exists $t_n\uparrow T$ and a triangle $\{ijk\}\in F$, such that the triangle inequality in triangle $\{ijk\}\in F$ do not hold any more as $n\rightarrow +\infty$. At first glance the limit behavior of $r(t)$ as $t\uparrow T$ may be mixed of the three types and may be very complicated. We prove that in any finite time interval, $r(t)$ never touches the ``$0$ boundary" and ``$+\infty$ boundary".

\begin{proposition}\label{Prop-positive-low-bound}
Given a triangulated surface $(M, \mathcal{T})$ with inversive distance $I\geq 0$ in hyperbolic background geometry. Let $r(t)$ be the solution to flow (\ref{Def-ChowLuo-flow-hyper}) on a right maximal time interval $[0, T)$ with $0<T\leq +\infty$, then every $r_i(t)$ has a positive lower bound in any finite time interval $[0, a)\cap[0,T)$ with $a<+\infty$.
\end{proposition}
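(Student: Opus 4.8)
The plan is to transport the whole question into the $u$-coordinate $u_i=\ln\tanh\frac{r_i}{2}$, in which the flow is the autonomous system $u_i'(t)=-K_i$ of (\ref{Def-hyper-flow-u}). The essential feature of this change of variables is its behaviour near the ``$0$ boundary'': since $\tanh\frac{r_i}{2}\to 0^+$ as $r_i\downarrow 0$, we have $u_i\to-\infty$ precisely when $r_i\downarrow 0$. Hence the assertion that $r_i(t)$ has a positive lower bound on $[0,a)\cap[0,T)$ is equivalent to $u_i(t)$ being bounded below on that interval, and it suffices to produce a lower bound for $u_i(t)$.

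The key input is a cheap but uniform upper bound on the curvature. For every $r\in\Omega$ each triangle $\{ijk\}\in F$ is non-degenerate, so every inner angle satisfies $\theta_i^{jk}\in(0,\pi)$; since each vertex $i\in V$ lies in at least one face, $\sum_{\{ijk\}\in F}\theta_i^{jk}>0$ and therefore
\begin{equation*}
K_i=2\pi-\sum_{\{ijk\}\in F}\theta_i^{jk}<2\pi.
\end{equation*}
This bound holds at every point of $\Omega$, and the solution $\{r(t)\}$ stays in $\Omega$ by the definition of the maximal interval $[0,T)$, so $K_i(t)<2\pi$ for all $t\in[0,T)$.

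Now I would simply integrate. From $u_i'(t)=-K_i>-2\pi$ we get, for any $t\in[0,a)\cap[0,T)$,
\begin{equation*}
u_i(t)=u_i(0)-\int_0^t K_i\,ds>u_i(0)-2\pi a.
\end{equation*}
Exponentiating and using that $u_i(0)=\ln\tanh\frac{r_i(0)}{2}$ gives $\tanh\frac{r_i(t)}{2}>e^{-2\pi a}\tanh\frac{r_i(0)}{2}>0$, and since $x\mapsto\tanh\frac{x}{2}$ is strictly increasing this furnishes the asserted positive lower bound for $r_i(t)$, depending only on $a$ and $r_i(0)$.

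The point worth checking, and essentially the only one, is that the curvature bound is genuinely uniform along the flow; this is automatic here because $K_i<2\pi$ holds everywhere on $\Omega$ and requires nothing about the magnitude of the radii. In particular the argument uses only the upper estimate $K_i<2\pi$ and not the hypothesis $I\geq 0$. I would expect the real difficulty to lie in the companion statements that the solution avoids the ``$+\infty$ boundary'' and the ``triangle inequality invalid boundary'' in finite time, where controlling $K_i$ from below (equivalently keeping the angle sum at $i$ bounded away from its upper limit) is far more delicate and is presumably where the hypothesis $I\geq 0$ enters.
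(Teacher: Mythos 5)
Your proof is correct and follows essentially the same route as the paper: pass to the coordinate $u_i=\ln\tanh\frac{r_i}{2}$, bound the curvature uniformly (the paper quotes $|K_i|\leq c$ with $c$ depending only on the triangulation, while you observe that only the upper bound $K_i<2\pi$ is actually needed), and integrate $u_i'=-K_i$ to get a lower bound for $u_i$ on any finite time interval. The paper's displayed inequality $r_i(t)\geq\ln\frac{1+c_ie^{-ct}}{1-c_ie^{-ct}}$ is exactly your estimate $\tanh\frac{r_i(t)}{2}\geq e^{-ct}\tanh\frac{r_i(0)}{2}$ with $c$ in place of $2\pi$.
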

\begin{proof}
Note that, all $|K_i|$ are uniformly bounded by a constant $c>0$, which depends only on the triangulation. Denote $c_i=\tanh\frac{r_i(0)}{2}$, obviously $c_i\in(0,1)$, then
$$r_i(t)\geq\ln \frac{1+c_ie^{-ct}}{1-c_ie^{-ct}}>0,$$
which implies the conclusion above.
\end{proof}

\begin{lemma}\label{Lemma-edge-big-then-angle-zero}
For any $\epsilon>0$, there exists a number $l$ so that when $r_i>l$, the inner angle $\theta_i$ in the hyperbolic triangle $\triangle v_iv_jv_k$ is smaller than $\epsilon$.
\end{lemma}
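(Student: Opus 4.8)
The plan is to reduce the statement to a single explicit estimate by invoking the hyperbolic law of cosines. Writing $l_{ij},l_{ik}$ for the two edges of $\triangle v_iv_jv_k$ incident to $v_i$ and $l_{jk}$ for the edge opposite $v_i$, the law of cosines in $\mathds{H}^2$ gives
\begin{equation*}
\cos\theta_i=\frac{\cosh l_{ij}\cosh l_{ik}-\cosh l_{jk}}{\sinh l_{ij}\sinh l_{ik}}=\coth l_{ij}\coth l_{ik}-\frac{\cosh l_{jk}}{\sinh l_{ij}\sinh l_{ik}}.
\end{equation*}
Since $\coth l\geq 1$ for every $l>0$, the first term is at least $1$, so it suffices to prove that the second term tends to $0$ as $r_i\to+\infty$; then $\cos\theta_i\to 1$ and hence $\theta_i\to 0$.

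The main work is then to control the three lengths through the inversive distance formula $\cosh l_{ij}=\cosh r_i\cosh r_j+I_{ij}\sinh r_i\sinh r_j$. Using $\sinh r\leq\cosh r$ together with $I\geq 0$, I would bound the opposite edge from above by $\cosh l_{jk}\leq(1+I_{jk})\cosh r_j\cosh r_k$, and each incident edge from below by $\cosh l_{ij}\geq\sinh r_i(\cosh r_j+I_{ij}\sinh r_j)\geq\sinh r_i\cosh r_j$. Once $r_i$ is large enough that $\cosh l_{ij}\geq 2$ (which holds as soon as $\sinh r_i\geq 2$, a condition on $r_i$ alone), one has $\sinh l_{ij}\geq\frac12\cosh l_{ij}\geq\frac12\sinh r_i\cosh r_j$, and likewise for $l_{ik}$. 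Combining these gives
\begin{equation*}
\frac{\cosh l_{jk}}{\sinh l_{ij}\sinh l_{ik}}\leq\frac{(1+I_{jk})\cosh r_j\cosh r_k}{\tfrac14\sinh^2 r_i\,\cosh r_j\cosh r_k}=\frac{4(1+I_{jk})}{\sinh^2 r_i},
\end{equation*}
so that $\cos\theta_i\geq 1-4(1+I_{jk})/\sinh^2 r_i$. Choosing $l$ with $4(1+I_{jk})/\sinh^2 l<1-\cos\epsilon$ then forces $\cos\theta_i>\cos\epsilon$, i.e.\ $\theta_i<\epsilon$, whenever $r_i>l$.

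The point I want to emphasise, and the only genuinely delicate feature, is the \emph{uniformity} in $r_j,r_k$: in the displayed bound the factors $\cosh r_j\cosh r_k$ cancel, leaving a quantity governed purely by $r_i$ and the fixed inversive distance $I_{jk}$. Thus the threshold $l$ depends only on $\epsilon$ and the triangulation data, not on the other two radii, which is exactly what makes the lemma usable when several radii are allowed to vary simultaneously. If one only needs the weaker statement for fixed $r_j,r_k$, no cancellation is required: the numerator $\cosh l_{jk}$ stays bounded while the denominator $\sinh l_{ij}\sinh l_{ik}$ grows like $e^{2r_i}$, so the second term tends to $0$ immediately. Either way the conclusion $\theta_i\to 0$ follows from the same law-of-cosines identity.
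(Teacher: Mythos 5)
Your proof is correct. Note that the paper itself gives no argument for this lemma at all: it simply cites Chow--Luo (Lemma 3.5 of \cite{CL1}) and the ``elementary proof'' in Ge--Xu (\cite{Ge-Xu4}, Lemma 3.2), both of which are written in the Thurston intersection-angle setting $\Phi\in[0,\tfrac{\pi}{2}]$, i.e.\ $I\in[0,1]$, whereas the lemma is invoked here (and in Lemma \ref{Lemma-edge-big-then-angle-zero-extended} and Proposition \ref{Prop-uniform-up-bound}) for general inversive distance $I\geq 0$. Your computation closes exactly that gap: the splitting $\cos\theta_i=\coth l_{ij}\coth l_{ik}-\cosh l_{jk}/(\sinh l_{ij}\sinh l_{ik})$ with $\coth\geq 1$, the bounds $\cosh l_{jk}\leq(1+I_{jk})\cosh r_j\cosh r_k$ and $\cosh l_{ij}\geq\sinh r_i\cosh r_j$ (both of which use only $I\geq 0$ and $\sinh\leq\cosh$), and the cancellation of the $\cosh r_j\cosh r_k$ factors yield a threshold $l$ depending only on $\epsilon$ and $\max_e I_e$ --- the uniformity in $r_j,r_k$ that you rightly flag is precisely what Proposition \ref{Prop-uniform-up-bound} needs, since there the other radii are not controlled. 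Two cosmetic points: fold the auxiliary requirement $\sinh l\geq 2$ into your final choice of $l$, and note that for $\epsilon\geq\pi$ the claim is vacuous, so you may assume $\epsilon\in(0,\pi)$ when comparing $\cos\theta_i$ with $\cos\epsilon$. With those trivial adjustments your argument is a complete, self-contained replacement for the external citations and is strictly more general than what they literally cover.
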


Chow and Luo first state above lemma, see Lemma 3.5 in \cite{CL1}. For an elementary proof, see \cite{Ge-Xu4}, Lemma 3.2.

\begin{proposition}\label{Prop-uniform-up-bound}
Given a triangulated surface $(M, \mathcal{T})$ with inversive distance $I\geq 0$ in hyperbolic background geometry. Let $\{r(t)|0\leq t<T\}$ be the unique solution to flow (\ref{Def-ChowLuo-flow-hyper}) on a right maximal time interval $[0, T)$ with $0<T\leq +\infty$. Then all $r_i(t)$ are uniformly bounded above on $t\in[0,T)$.
\end{proposition}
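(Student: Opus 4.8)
The plan is to argue entirely in the $u$-coordinate, where the flow reads ${u_i}'(t)=-K_i$ and the map $r_i\mapsto u_i=\ln\tanh\frac{r_i}{2}$ is an increasing homeomorphism of $(0,+\infty)$ onto $(-\infty,0)$ with $u_i\to 0^-$ exactly as $r_i\to+\infty$. Thus a uniform upper bound on $r_i(t)$ is equivalent to producing a constant $\delta>0$ with $u_i(t)\leq-\delta$ for all $i$ and all $t\in[0,T)$, i.e.\ to keeping the envelope $\Phi(t):=\max_i u_i(t)$ bounded away from $0$. The mechanism that prevents $u_i$ from climbing toward $0$ is that a large radius forces a strictly positive curvature at that vertex, so the flow pushes $u_i$ back down; I would make this precise through a maximum principle applied to $\Phi$.

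For the geometric input, let $d=\max_i d_i$, where $d_i$ is the number of faces of $F$ incident to the vertex $i$, and fix $\epsilon>0$ with $d\epsilon<\pi$ (say $\epsilon=\pi/(2d)$). By Lemma \ref{Lemma-edge-big-then-angle-zero} there is a threshold $l>0$ so that $r_i>l$ forces every inner angle $\theta_i^{jk}$ at $i$ to satisfy $\theta_i^{jk}<\epsilon$; consequently $\sum_{\{ijk\}\in F}\theta_i^{jk}<d_i\epsilon\leq d\epsilon<\pi$, and hence
$$K_i=2\pi-\sum_{\{ijk\}\in F}\theta_i^{jk}>\pi>0\qquad\text{whenever } r_i>l.$$
Here the hypothesis $I\geq0$ enters implicitly: it guarantees $l_{ij},l_{ik}\to\infty$ as $r_i\to\infty$, which is exactly what makes the angle at $v_i$ collapse and legitimizes the use of Lemma \ref{Lemma-edge-big-then-angle-zero}. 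Set $u^*=\ln\tanh\frac{l}{2}<0$, so that $u_i>u^*$ is equivalent to $r_i>l$.

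For the maximum principle, note that $\Phi(t)=\max_i u_i(t)$ is locally Lipschitz, and at each $t$ its upper right Dini derivative satisfies $D^+\Phi(t)=\max\{{u_i}'(t): u_i(t)=\Phi(t)\}=-\min\{K_i: u_i(t)=\Phi(t)\}$. If $\Phi(t)>u^*$, then every index $i$ realizing the maximum has $r_i(t)>l$, whence $K_i>\pi$ by the previous step and $D^+\Phi(t)<-\pi<0$. A standard comparison argument then yields $\Phi(t)\leq\max\{\Phi(0),u^*\}$ for all $t\in[0,T)$: indeed $\Phi$ is strictly decreasing on any subinterval where it exceeds $u^*$, so it can never rise above $\max\{\Phi(0),u^*\}$. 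Translating back through the coordinate change gives $r_i(t)\leq\max\{\max_j r_j(0),\,l\}$ for all $i$ and all $t\in[0,T)$, which is the desired uniform upper bound; crucially this bound is independent of $T$, in contrast with the lower bound of Proposition \ref{Prop-positive-low-bound}.

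The only genuinely delicate points are the geometric content of Lemma \ref{Lemma-edge-big-then-angle-zero}, which supplies the uniform angle decay and is already available to us by citation, and the justification of the maximum-principle step for the non-smooth envelope $\Phi$. For the latter one must know that $\Phi$ is differentiable almost everywhere with derivative equal to that of an active index, or equivalently argue with Dini derivatives; this is routine but I would state it carefully rather than gloss over it, since it is where the argument could appear to cheat. Everything else is bookkeeping, and in particular neither the Gauss--Bonnet relation (\ref{Gauss-Bonnet}) nor the positive lower bound of Proposition \ref{Prop-positive-low-bound} is needed for this upper estimate.
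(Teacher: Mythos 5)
Your argument is correct and rests on exactly the same mechanism as the paper's proof: Lemma \ref{Lemma-edge-big-then-angle-zero} gives a threshold $l$ beyond which $K_i>\pi$, so the flow strictly decreases any radius exceeding $l$, yielding the bound $r_i(t)\leq\max\{\max_j r_j(0),\,l\}$. The only difference is packaging --- you run a Dini-derivative maximum principle on the envelope $\max_i u_i$, whereas the paper tracks a single offending vertex and derives a contradiction at the last time its radius crosses $l$ --- so this is essentially the same proof.
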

\begin{proof}
We use Ge-Xu's tricks first developed in \cite{Ge-Xu4}. If the proposition is not true, then there exists at least one vertex $i\in V$, such that $\overline{\lim\limits_{t\uparrow T}}~r_i(t)=+\infty$.
For this vertex $i$, using Lemma \ref{Lemma-edge-big-then-angle-zero}, we can choose $l>0$ large enough so that, whenever $r_i>l$,
the inner angle $\theta_i^{jk}$ is smaller than $\pi/d_i$, where $d_i$ is the valence (or say degree) at $i$. Thus $K_i>\pi$. Choose $t_0\in(0,T)$ such that $r_i(t_0)>l$, this can be done since $\overline{\lim\limits_{t\uparrow T}}~r_i(t)=+\infty$. Denote $a=\inf\{\,t<t_0\,|\,r_i(t)>l\,\}$, then $r_i(a)=l$. Let's look at what happens to flow equation (\ref{Def-ChowLuo-flow-hyper}) in the time interval $[a, t_0]$. Note that $r'_i(t)=-K_i\sinh r_i<0$ for $a\leq t\leq t_0$, hence $r_i(t)\leq r_i(a)=l$, which contradicts with $r_i(t_0)>l$.
\end{proof}
\begin{remark}
Chow-Luo proved this result for Andreev-Thurston's circle pattern case by using a maximum principle. However, for inversive distance circle pattern setting, the maximum principle doesn't work, see subsection \ref{section-max-principle} in the following.
\end{remark}

\begin{theorem}[Local stable]\label{Thm-0curv-metric-exist-imply converg}
Given $(M, \mathcal{T})$ with inversive distance $I\geq 0$ in hyperbolic background geometry.
\begin{enumerate}
  \item If the solution $\{r(t)\}\subset\Omega$ to flow (\ref{Def-ChowLuo-flow-hyper}) converges to a metric $r^*\in\Omega$, then $r^*$ is a zero curvature metric. As a consequence, there exists a zero curvature metric on $(M, \mathcal{T})$.
  \item Conversely, assuming there exists a zero curvature metric $r^*\in\Omega$. Then the solution $r(t)$ to flow (\ref{Def-ChowLuo-flow-hyper}) exists for all time $t\geq0$ and converges exponentially fast to $r^*$ if $r(0)$ is close enough to $r^*$.
\end{enumerate}
\end{theorem}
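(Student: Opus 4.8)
The plan is to treat the two parts separately, working throughout in the $u$-coordinate of Section \ref{section-basic}, where by (\ref{Def-hyper-flow-u}) the flow reads $u_i'(t)=-K_i$, and writing $u^*=\ln\tanh\frac{r^*}{2}$ for the image of a point $r^*\in\Omega$.

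For part (1), suppose $r(t)\to r^*\in\Omega$. Since $r^*$ lies in the interior and $r\mapsto u$ is a homeomorphism onto $\ln\tanh\frac{\Omega}{2}$, the trajectory $u(t)$ converges to the finite point $u^*$, and each $K_i$ is continuous near $u^*$, so $K_i(u(t))\to K_i(u^*)$. If some $K_i(u^*)\neq 0$, say $K_i(u^*)=2c>0$, then $u_i'(t)=-K_i(u(t))<-c$ for all large $t$, which forces $u_i(t)\to-\infty$ and contradicts convergence; the case $K_i(u^*)<0$ is symmetric. Hence $K_i(u^*)=0$ for every $i$, that is, $r^*$ is a zero curvature metric, and in particular such a metric exists on $(M,\mathcal{T})$.

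For part (2), the essential input is the local rigidity established by Guo \cite{Guoren} and Luo \cite{Luo1}: in $u$-coordinate the Jacobian
\[
\Lambda(u)=\left(\frac{\partial K_i}{\partial u_j}\right)_{N\times N}
\]
is symmetric and positive definite. The symmetry reflects that $\Lambda$ is the Hessian of Guo--Luo's combinatorial Ricci potential $\cE$, which satisfies $\frac{\partial \cE}{\partial u_i}=K_i$; thus the flow is the negative gradient flow $u'=-\nabla_u\cE$, and $u^*$ is a nondegenerate critical point of the locally strictly convex function $\cE$. (Unlike the Euclidean case there is no scaling direction in the kernel, so in the hyperbolic setting $\Lambda$ is genuinely nondegenerate.) Granting this, the equilibrium $u^*$ of the autonomous system $u'=-K(u)$ has linearization $\dot v=-\Lambda(u^*)v$, whose coefficient matrix has all eigenvalues real and bounded above by $-\lambda$, where $\lambda=\lambda_{\min}(\Lambda(u^*))>0$. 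By the standard theorem on asymptotic stability of hyperbolic equilibria—equivalently, using $\cE(u)-\cE(u^*)$ as a Lyapunov function together with the strong-convexity estimate $|\nabla_u\cE|^2\geq 2\lambda\big(\cE-\cE(u^*)\big)$ on a convex neighborhood of $u^*$—there is a neighborhood of $u^*$ from which every solution stays nearby, exists for all $t\geq 0$, and satisfies $|u(t)-u^*|\leq Ce^{-\lambda t}$. Since such a trajectory remains in a compact subset of $\ln\tanh\frac{\Omega}{2}$, it never reaches the boundary, so by Proposition \ref{Prop-maxm-exists-time} the maximal time is $T=+\infty$; transporting back through the diffeomorphism $u\mapsto r$ yields the exponential convergence $r(t)\to r^*$.

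I expect the genuine difficulty to be concentrated entirely in the positive definiteness of $\Lambda(u^*)$: once the Hessian of the combinatorial Ricci potential is known to be positive definite at $r^*$, part (2) is a routine application of ODE stability theory and part (1) needs only continuity of $K$. The role of the hypothesis $I\geq 0$ is precisely to secure this positivity, since in the inversive distance regime the angle derivatives need not have the favorable signs for all $I>-1$; the argument must therefore lean on the detailed analysis of Guo--Luo's potential rather than on any maximum-principle shortcut available in the intersecting-circle case.
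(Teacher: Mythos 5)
Your proposal is correct and follows essentially the same route as the paper: part (1) is the standard fact that the limit of a convergent trajectory of an autonomous system is an equilibrium, and part (2) linearizes the flow at $u^*$, invokes Guo's positive definiteness of the Jacobian $\big(\partial K_i/\partial u_j\big)$ (the paper's matrix $L$), and concludes by Lyapunov's stability theorem. The only cosmetic issue is your reuse of the symbol $\Lambda$ for this Jacobian, which collides with the paper's auxiliary function $\Lambda$ in (\ref{def-big-lamda}).
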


\begin{proof}
We say the solution $r(t)$ to flow (\ref{Def-ChowLuo-flow-hyper}) converges, if $r(t)$ exists for all time $t\in[0,+\infty)$, and $r(t)$ converges in the Euclidean space topology to some $r^*\in\Omega$ as time $t$ tends to $+\infty$. According to the classical ordinary differential equation theory, $r^*$ is a zero point of $-K_i\sinh r_i$. Hence $K_i(r^*)=0$ for each $i$, and $r^*$ is the unique zero curvature metric. Conversely, assume $r^*\in\Omega$ is a zero curvature metric. Write flow (\ref{Def-hyper-flow-u}) in matrix form as $\dot{u}=-K$, then differentiate $K$ at $u^*$, we get
$$D_u(-K)\big|_{u^*}=-\frac{\partial(K_{1},\cdots,K_{N})}{\partial(u_{1},\cdots,u_{N})}=-L.$$
By Guo Ren's result that $L$ is positive definite at each $r\in\Omega$ (see the proof of Corollary in \cite{Guoren}), and Lyapunov stability theorem in ODE theory, $u^*$
is an asymptotically stable point of the flow (\ref{Def-ChowLuo-flow-hyper}), this implies the conclusion above.
\end{proof}

\section{extended combinatorial Ricci flow}\label{section-extend-flow}
Let $\{r(t)|0\leq t<T\}$ be the maximal unique solution to flow (\ref{Def-ChowLuo-flow-hyper}). If $T< +\infty$, then by Proposition \ref{Prop-positive-low-bound} and \ref{Prop-uniform-up-bound}, there exist constants $c, C$ so that $0<c\leq r_i(t)\leq C$ for each $i\in V$ and all $t\in[0,T)$. As a consequence, all edges $l_{ij}(t)$ remain positive for $t\in[0,T)$. Thus $r(t)$ touches only the ``triangle inequality invalid boundary" of $\Omega$ as $t\uparrow T$, that is, there exist $t_n\uparrow T$ and some triangles in $F$, such that the triangle inequality in these triangles do not hold any more as $n\rightarrow +\infty$. However, for these degenerate triangles, we can give reasonable meaning of inner angles, therefore we can extend the definition of discrete Gaussian curvatures to triangulations gluing by both ordinary and degenerate triangles. Using this extension of curvature, we extend flow (\ref{Def-ChowLuo-flow-hyper}) naturally. It's very interesting that the degenerate triangles can be deformed again along the extended flow. In this meaning, we finally extend the solution $r(t)$ so that it exists for all time $t\in[0,+\infty)$.

\subsection{Generalized triangle and generalized curvature}
We follow \cite{Bobenko,Luo1,Luo2} to generalize the definition of inner angles of triangles and \cite{Ge-Jiang1} to generalize the definition of discrete Gaussian curvatures. The key point is to extend the definition of inner angles, and as to the extension of $K_i$, it is just a combinatorial process.

A generalized hyperbolic (or Euclidean) triangle $\triangle v_1v_2v_3$ is a topological triangle of vertices $v_1, v_2, v_3$ so that each edge is assigned a positive number, called edge length. Let $x_i$ be the assigned length of the edge $v_jv_k$ where $\{i, j, k\}$=$\{1, 2, 3\}$. The inner angle $\tilde{\theta}_i$=$\tilde{\theta}_i(x_1, x_2, x_3)$ at the vertex $v_i$ is defined as follows. If $x_1, x_2, x_3$ satisfy the triangle inequalities, then $\tilde{\theta}_i$ is the inner angle of the hyperbolic (or Euclidean) triangle of edge lengths $x_1, x_2, x_3$ opposite to the edge of length $x_i$; if $x_i\geq x_j+x_k$, then $\tilde{\theta}_i=\pi$, $\tilde{\theta}_j=\tilde{\theta}_k=0$. Denote
\begin{equation}
\Xi\triangleq\left\{(x_1,x_2,x_3)\in\mathds{R}^3_{>0}\,\big|\,x_i+x_j>x_k,\, \{i,j,k\}=\{1,2,3\}\right\}
\end{equation}
Obviously, the inner angles $\theta_i$, $\theta_j$ and $\theta_k$ which are originally defined on $\Xi$, are now extend to $\tilde{\theta}_i$, $\tilde{\theta}_j$ and $\tilde{\theta}_k$ which are defined on $\mathds{R}^3_{>0}$. Luo (Lemma 2.2,\cite{Luo1}) already showed that the extension is continuously. We can express the extended inner angles $\tilde{\theta}_i:\mathds{R}^3_{>0}\rightarrow [0,\pi]$, $i\in \{i,j,k\}$ more clearly and directly. In fact, recall the auxiliary function $\Lambda(x)$ we introduced in \cite{Ge-Jiang1}
\begin{equation}\label{def-big-lamda}
\Lambda(x)=
\begin{cases}
\;\;\;\;\;\pi, & \text{$x \leq -1$.}\\
\arccos x, & \text{$-1\leq x \leq 1$.}\\
\;\;\;\;\;0,& \text{$x\geq 1$.}
\end{cases}
\end{equation}
$\Lambda$ is continuous on $\mathds{R}$ and $\Lambda(-x)=\pi-\Lambda(x)$ for each $x\in \mathds{R}$. Then it follows for each $i\in \{i,j,k\}$
\begin{equation}\label{formula-tilde-xita-euclid}
\tilde{\theta}_i(x_1,x_2,x_3)=\Lambda\bigg(\frac{x_j^2+x_{k}^2-x_{i}^2}{2x_{j}x_{k}}\bigg)
\end{equation}
in Euclidean background geometry and
\begin{equation}\label{formula-tilde-xita-hyper}
\tilde{\theta}_i(x_1,x_2,x_3)=\Lambda\bigg(\frac{\cosh x_j\cosh x_{k}-\cosh x_{i}}{\sinh x_{j}\sinh x_{k}}\bigg)
\end{equation}
in hyperbolic background geometry. Note (\ref{formula-tilde-xita-euclid}) and (\ref{formula-tilde-xita-hyper}) also implies that $\tilde{\theta}_i\in C(\mathds{R}^3_{>0})$.

For a compact surface $M$ with triangulation $\mathcal{T}$, let $d:E\rightarrow(0,+\infty)$ be a positive function assigning each edge $\{ij\}\in E$ a length $d_{ij}$. We call $d$ a hyperbolic (or Euclidean) PL-metric (piecewise linear
metric) if for each triangle $\{ijk\}\in F$ (with lengthes $d_{ij}$, $d_{jk}$ and $d_{ik}$), $\{ijk\}$ is isometric to a hyperbolic (or Euclidean) triangle in $\mathds{H}^2$ (or $\mathds{E}^2$). Thus a triangulated surface $(M, \mathcal{T})$ with a hyperbolic (or Euclidean) PL-metric can be obtained by gluing some hyperbolic (or Euclidean) triangles coherently together along edges. Note the discrete Gaussian curvatures (\ref{classical Gauss curv}) is still meaningful for surface with PL-metric. However, if we change hyperbolic (or Euclidean) triangles to generalized hyperbolic (or Euclidean) triangles, and``gluing" the generalized hyperbolic (or Euclidean) triangles together, we can get a generalized hyperbolic (or Euclidean) PL-metric, and then we naturally extend the definition of discrete Gaussian curvature continuously to
\begin{equation}\label{def-K-tuta}
\widetilde{K}_i(d)=2\pi-\sum_{\{ijk\} \in F}\tilde{\theta}_i^{jk}.
\end{equation}
Note each point $d\in\mathds{R}^{|E|}_{>0}$ is in fact a generalized hyperbolic (or Euclidean) PL-metric, hence $\widetilde{K}_i$ is a continuous function defined on $\mathds{R}^{|E|}_{>0}$. For the extended curvature $\widetilde{K}$, we still have a discrete version of Gauss-Bonnet formula
\begin{equation}\label{Gauss-Bonnet-extend}
\sum_{i=1}^N\widetilde{K}_i=2\pi \chi(M)+\lambda Area(M).
\end{equation}
\subsection{Luo's extension method for locally convex function}
Inversive distance circle pattern is a way to produce generalized hyperbolic (or Euclidean) PL-metrics. Given a triangulated surface $(M, \mathcal{T})$ with inversive distance $I\geq0$ in hyperbolic (or Euclidean) background geometry. Consider a generic triangle $\{ijk\}\in F$, which is configured by three circles with inversive distance $I_{ij}$, $I_{jk}$ and $I_{ik}$. With out loss of generality, we may suppose the three vertices $i$, $j$, $k$ appear in this order in the ordered vertex sequence $1,\cdots,N$. Denote
\begin{equation}
\Delta_{ijk}\triangleq\left\{(r_i,r_j,r_k)\in\mathds{R}^3_{>0}\,\big|\,(l_{ij},l_{jk},l_{ik})\in \Xi\right\},
\end{equation}
where $l_{ij}$ is determined by (\ref{formula-inver-dist-hyper}) (or (\ref{Def-edge-length})) in hyperbolic (or Euclidean) background geometry. In the following, we only consider the hyperbolic background geometry. We use $\ln\tanh\frac{\Delta_{ijk}}{2}$ as the homeomorphic image of
$\Delta_{ijk}$ under coordinate change $u_i=\ln\tanh\frac{r_i}{2}$. Guo proved (see Lemma 11-12, Corollary 13, \cite{Guoren})
\begin{lemma}\label{lemma-guoren-define-W}
In hyperbolic background geometry, $\Delta_{ijk}$ is a non-empty simply connected open subset of $\mathds{R}^3_{>0}$. The Jacobian matrix of functions $\theta_i^{jk}$, $\theta_j^{ik}$, $\theta_k^{ij}$ in terms of $u_1$, $u_2$, $u_3$ is symmetric and negative definite.
For any fixed $c\in\ln\tanh\frac{\Delta_{ijk}}{2}$, the integration $W_{ijk}(u_i,u_j,u_k)\triangleq\int_{c}^{(u_i,u_j,u_k)}(\theta_i^{jk}du_i+\theta_j^{ik}du_j+\theta_k^{ij}du_k)$ is well defined. It is a locally strictly concave function on $\ln\tanh\frac{\Delta_{ijk}}{2}$ and satisfies $\frac{\partial W_{ijk}}{\partial u_i}=\theta_i^{jk}$.
\end{lemma}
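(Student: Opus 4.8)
The statement bundles three claims, and the plan is to prove them in an order in which each feeds the next: the topology of $\Delta_{ijk}$, then the symmetry and definiteness of the Jacobian of the angle map, and finally the construction of $W_{ijk}$. The logical skeleton is that symmetry of the Jacobian makes the $1$-form $\omega=\theta_i^{jk}\,du_i+\theta_j^{ik}\,du_j+\theta_k^{ij}\,du_k$ closed; simple connectivity of $\ln\tanh\frac{\Delta_{ijk}}{2}$ promotes closed to exact, so that $W_{ijk}$ is a single-valued, path-independent primitive; and negative definiteness of the Jacobian is exactly the local strict concavity of $W_{ijk}$ together with $\partial W_{ijk}/\partial u_i=\theta_i^{jk}$.

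First I would settle the topology. Openness is immediate: the map $(r_i,r_j,r_k)\mapsto(l_{ij},l_{jk},l_{ik})$ given by (\ref{formula-inver-dist-hyper}) is smooth, $\Xi$ is open, so $\Delta_{ijk}$ is the preimage of an open set and is open; and since $r\mapsto u$ is a homeomorphism, $\ln\tanh\frac{\Delta_{ijk}}{2}$ is open too. For non-emptiness I would use the large equal-radius regime special to hyperbolic geometry: taking $r_i=r_j=r_k=R$, formula (\ref{formula-inver-dist-hyper}) gives $l_{ab}=2R+\ln\frac{1+I_{ab}}{2}+o(1)$ as $R\to+\infty$, so the three edge lengths all diverge like $2R$ while differing by bounded amounts, which forces all three triangle inequalities once $R$ is large. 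Simple connectivity is the delicate point; I would fiber $\Delta_{ijk}$ over the $(r_i,r_j)$-coordinates and argue that, for fixed $(r_i,r_j)$, the admissible set of $r_k$ is a single open interval, by controlling the monotonicity in $r_k$ of the three defect functions $l_{ik}+l_{jk}-l_{ij}$, $l_{ij}+l_{jk}-l_{ik}$, $l_{ij}+l_{ik}-l_{jk}$ together with their $r_k\to 0$ and $r_k\to+\infty$ asymptotics. This would exhibit $\ln\tanh\frac{\Delta_{ijk}}{2}$ as the region between two continuous graphs over a connected planar base, hence simply connected. The careful boundary analysis of where each triangle inequality degenerates is exactly Guo's Lemmas 11--12 in \cite{Guoren}, and I would follow it.

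The analytic heart is the Jacobian $J=\partial(\theta_i^{jk},\theta_j^{ik},\theta_k^{ij})/\partial(u_i,u_j,u_k)$. I would compute it through the chain $u\to r\to l\to\theta$, using $\partial r_m/\partial u_m=\sinh r_m$, the $r$-derivatives of (\ref{formula-inver-dist-hyper}), and the derivative of the hyperbolic cosine law applied to $\theta_i^{jk}=\arccos\big((\cosh l_{ij}\cosh l_{ik}-\cosh l_{jk})/(\sinh l_{ij}\sinh l_{ik})\big)$ on $\Delta_{ijk}$. The symmetry $\partial\theta_i/\partial u_j=\partial\theta_j/\partial u_i$ I would verify by a direct chain-rule computation; it becomes a clean identity precisely because the coordinate $u_m=\ln\tanh\frac{r_m}{2}$ supplies the factor $\sinh r_m=\partial r_m/\partial u_m$ that balances the derivative cosine-law terms. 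For definiteness I would establish the sign pattern $\partial\theta_i/\partial u_i<0$ (enlarging $r_i$ pushes vertex $i$ outward and shrinks $\theta_i$), $\partial\theta_i/\partial u_j>0$ for $i\neq j$, and, crucially, the strict row-sum inequality $\partial\theta_i/\partial u_i+\partial\theta_i/\partial u_j+\partial\theta_i/\partial u_k<0$, which is the infinitesimal statement that enlarging all three circles simultaneously strictly decreases each angle — a genuinely hyperbolic effect with no Euclidean counterpart, where the corresponding row sums vanish by scale invariance. Granting this sign pattern, $-J$ is symmetric with positive diagonal, negative off-diagonal entries, and strict diagonal dominance, hence positive definite by Gershgorin; equivalently $J$ is negative definite.

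With symmetry and definiteness in hand, the assembly is routine. Symmetry makes $d\omega=0$ on the open set $\ln\tanh\frac{\Delta_{ijk}}{2}$; simple connectivity gives a single-valued primitive $W_{ijk}(u_i,u_j,u_k)=\int_{c}^{(u_i,u_j,u_k)}\omega$ independent of the path, and the fundamental theorem of calculus yields $\partial W_{ijk}/\partial u_i=\theta_i^{jk}$. Since the Hessian of $W_{ijk}$ equals $J$, the negative definiteness proved above is precisely local strict concavity. I expect the main obstacle to be the definiteness step, specifically the strict row-sum inequality: it does not follow from soft monotonicity and requires quantitatively controlling the competition between the negative diagonal term and the two positive off-diagonal terms. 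The simple-connectivity claim is the main topological subtlety, but it is comparatively routine once the monotonicity of the defect functions along the $r_k$-fibers is established.
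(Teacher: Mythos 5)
First, a point of reference: the paper does not prove this lemma at all --- it is quoted from Guo (Lemmas 11--12 and Corollary 13 of \cite{Guoren}) --- so the comparison here is with Guo's argument rather than with anything in the present text. Your overall architecture (topology of $\Delta_{ijk}$, then symmetry of the Jacobian giving a closed $1$-form, then simple connectivity giving the single-valued primitive $W_{ijk}$ whose Hessian is the Jacobian) is the right one, and the openness, non-emptiness and final assembly steps are sound.

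The genuine gap is in the negative-definiteness step, and it is not a technicality: the sign pattern you posit is false in exactly the regime this lemma is needed for. You claim $\partial\theta_i/\partial u_j>0$ for $i\neq j$ and then invoke strict diagonal dominance plus Gershgorin. But for inversive distance $I>1$ (separated circles) the off-diagonal derivatives $\partial\theta_i^{jk}/\partial u_j$ can be negative; this is precisely Guo's observation, which the present paper itself invokes in Subsection \ref{section-max-principle} to explain why the maximum principle breaks down for general $I\geq 0$ even though it works for $0\leq I\leq 1$. Once an off-diagonal entry can be negative, a negative row sum no longer yields $|a_{ii}|>\sum_{j\neq i}|a_{ij}|$, and the Gershgorin argument collapses. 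Guo's actual proof does not go through diagonal dominance at all: he computes the $3\times 3$ Jacobian explicitly as a matrix product and verifies negative definiteness by checking the signs of the determinant and of the leading principal minors directly. Your simple-connectivity sketch also leans on monotonicity in $r_k$ of defect functions such as $l_{ij}+l_{jk}-l_{ik}$, whose $r_k$-derivative has no definite sign; since you explicitly defer that analysis to Guo's Lemmas 11--12 this is a deferral rather than an error, but the definiteness step, by contrast, is an argument that cannot be repaired along the lines you propose.
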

Using the locally concave property of $W_{ijk}$, Guo proved local rigidity for inversive distance circle patterns. To obtain global rigidity, one needs to extend $W_{ijk}$ to a global concave function. \cite{Bobenko} first provided a method to do so (in discrete conformal change setting), while Luo \cite{Luo1} generalized \cite{Bobenko}'s work to more generic $1$-forms and locally convex functions. A differential $1$-form $\omega=\sum_{i=1}^na_i(x)dx_i$ in an open set $U\subset\mathds{R}^n$ is said to be continuous if each $a_i(x)$ is a continuous function on $U$. A continuous $1$-form $\omega$ is called closed if $\int_{\partial\tau}\omega=0$ for any Euclidean triangle $\tau\subset U$. By the standard approximation theory, if $\omega$ is closed and $\gamma$ is a piecewise $C^1$-smooth null homologous loop in $U$, then $\int_{\gamma}\omega=0$. If $U$ is simply connected, then in the integral
$G(x)=\int_{a}^x\omega$ is well defined (where $a\in U$ is arbitrary chose), independent of the choice of piecewise smooth paths in $U$ from $a$ to $x$. Moreover, the function $G(x)$ is $C^1$-smooth so that $\frac{\partial G(x)}{\partial x_i}=a_i(x)$. Luo proved
\begin{lemma}[$C^{1,1}$-extend]\label{lemma-luo-essential}
The $C^{\infty}$-smooth $1$-form $\theta_i^{jk}du_i+\theta_j^{ik}du_j+\theta_k^{ij}du_k$, originally defined on $\ln\tanh\frac{\Delta_{ijk}}{2}$,
 can be extended to a continuous closed $1$-form $\tilde{\theta}_i^{jk}du_i+\tilde{\theta}_j^{ik}du_j+\tilde{\theta_k}^{ij}du_k$, defined on $\mathds{R}^3_{<0}$, so that the integration
\begin{equation}\label{def-F-tuta-123}
\widetilde{W}_{ijk}(u_i,u_j,u_k)\triangleq\int_{c}^{(u_i,u_j,u_k)} \tilde{\theta}_i^{jk} du_i+\tilde{\theta}_j^{ik} du_j+\tilde{\theta}_k^{ij} du_k, \,\,\,(u_i,u_j,u_k)\in\mathds{R}^3_{<0}
\end{equation}
is a $C^1$-smooth concave function, where $c$ is arbitrary chosen in $\mathds{R}^3_{<0}$.
\end{lemma}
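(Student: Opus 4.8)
The plan is to establish, in order, four things about the extended form $\omega \triangleq \tilde{\theta}_i^{jk}du_i + \tilde{\theta}_j^{ik}du_j + \tilde{\theta}_k^{ij}du_k$: that it is continuous, that it is closed, that its primitive $\widetilde{W}_{ijk}$ is consequently well defined and $C^1$, and finally that this primitive is concave. Write $A \triangleq \ln\tanh\frac{\Delta_{ijk}}{2} \subset \mathds{R}^3_{<0}$ for the region where all three triangle inequalities hold. Continuity of $\omega$ is immediate from (\ref{formula-tilde-xita-hyper}): each coefficient is $\Lambda$ composed with a map that is real-analytic in $(u_i,u_j,u_k)$ (through the real-analytic edge lengths and the real-analytic change $u\mapsto r$), and $\Lambda$ is continuous on $\mathds{R}$.

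For closedness I would decompose $\mathds{R}^3_{<0}$. On $A$ the form equals $dW_{ijk}$ by Lemma \ref{lemma-guoren-define-W}, hence is smooth and closed. The open complement $\mathds{R}^3_{<0}\setminus\overline{A}$ is the union of the (at most three) regions where one triangle inequality fails; on the region where the edge opposite $v_i$ is too long the extension forces $\tilde{\theta}_i^{jk}=\pi$ and $\tilde{\theta}_j^{ik}=\tilde{\theta}_k^{ij}=0$, so there $\omega=\pi\,du_i$ is a constant-coefficient form, smooth and closed. Thus $\omega$ is closed off the degeneracy locus $\partial A$, which is cut out in the length variables by analytic equations such as $l_{jk}=l_{ij}+l_{ik}$ and is therefore a real-analytic hypersurface. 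To upgrade to closedness on all of $\mathds{R}^3_{<0}$ I would verify $\int_{\partial\tau}\omega=0$ for every Euclidean triangle $\tau$: for a generic $\tau$ the real-analytic curve $\partial A\cap\tau$ cuts $\tau$ into finitely many regions, Stokes' theorem on each region (where $\omega$ is smooth and closed) gives zero, and the contributions along $\partial A\cap\tau$ cancel in pairs, so $\int_{\partial\tau}\omega=0$; the general $\tau$ follows by approximation using continuity of $\omega$. Since $\mathds{R}^3_{<0}$ is convex, hence simply connected, the primitive $\widetilde{W}_{ijk}(u)=\int_c^u\omega$ is path-independent and, by the general property of closed continuous $1$-forms recalled in the excerpt, is $C^1$ with $\partial\widetilde{W}_{ijk}/\partial u_i=\tilde{\theta}_i^{jk}$, and similarly for $u_j,u_k$.

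It remains to prove concavity, which I would check one line segment at a time. Fix a segment $t\mapsto p+tv$ in $\mathds{R}^3_{<0}$ and set $g(t)=\widetilde{W}_{ijk}(p+tv)$; then $g$ is $C^1$ and $g'(t)=\nabla\widetilde{W}_{ijk}(p+tv)\cdot v$ is continuous. Wherever $p+tv\in A$, Lemma \ref{lemma-guoren-define-W} makes $\Hess\widetilde{W}_{ijk}$ negative definite, so $g''(t)=v^{T}\,\Hess\widetilde{W}_{ijk}\,v\leq 0$; wherever $p+tv$ lies in a complementary region, $\widetilde{W}_{ijk}$ is affine and $g''(t)=0$. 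For a generic $v$ the restriction to the line of a defining analytic function of $\partial A$ is not identically zero, so the line meets $\partial A$ in a locally finite set; then $g'$ is continuous and nonincreasing on each of the finitely many subintervals it determines and agrees at the shared endpoints, hence is nonincreasing on the whole segment, so $g$ is concave. Because $\widetilde{W}_{ijk}$ is continuous and such generic segments are dense, the concavity inequality passes to every pair of points, and $\widetilde{W}_{ijk}$ is concave on all of $\mathds{R}^3_{<0}$.

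The main obstacle is the gluing across the degeneracy hypersurface $\partial A$, which enters twice: once to pass from ``closed off $\partial A$'' to ``closed,'' and once to pass from ``concave on each open piece'' to ``globally concave.'' Both rest on knowing that $\partial A$ is a genuine real-analytic hypersurface, so that a generic line, or a generic triangle, meets it in a locally finite set; pinning this down, together with the fact that only $C^1$ regularity (not $C^2$) is available across $\partial A$, so second-derivative tests fail exactly on the gluing set, is the delicate part. I would also note that since the Hessian is negative definite only on $A$ while the complementary pieces are merely affine, this argument yields concavity but not strict concavity, which is precisely what the statement claims.
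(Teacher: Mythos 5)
The paper itself contains no proof of this lemma: it is quoted from Luo \cite{Luo1}, so there is no in-paper argument to compare against. What you have written is, in substance, a reconstruction of Luo's general extension theorem for closed continuous $1$-forms and locally convex functions (Theorem 2.1 of \cite{Luo1}), specialized to the angle form: the decomposition of $\mathds{R}^3_{<0}$ into $A=\ln\tanh\frac{\Delta_{ijk}}{2}$ and the three disjoint open regions where one triangle inequality fails (on which the form collapses to the constant-coefficient form $\pi\,du_i$), the Stokes-and-cancellation argument across the real-analytic degeneracy locus, and the one-variable ``$g'$ is continuous and nonincreasing'' argument for concavity are exactly the ingredients of Luo's proof, and the logic is sound. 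Two places would need one more line each in a full write-up, though neither is a gap in the approach: (i) in the Stokes step, each region $R$ cut out of $\tau$ has boundary arcs lying on the degeneracy locus itself, where $\omega$ is merely continuous, so $\int_{\partial R}\omega=0$ should be obtained by pushing $\partial R$ slightly into the open region where $\omega$ is exact and then passing to the limit by continuity, rather than by a bare citation of Stokes; (ii) the genericity claims need the observation that each defining function such as $l_{jk}-l_{ij}-l_{ik}$ is real-analytic in $u$ and not identically zero (which follows since $\Delta_{ijk}\neq\emptyset$ by Lemma \ref{lemma-guoren-define-W}), so that its zero set meets a generic line or triangle in a locally finite set; also note that $\mathds{R}^3_{<0}\setminus\overline{A}$ is only contained in, not necessarily equal to, the union of the three failure regions, but closedness on the open set $A\cup B_1\cup B_2\cup B_3$ is all the argument requires. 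Your closing remark that one obtains concavity globally but strict concavity only on $\ln\tanh\frac{\Delta_{ijk}}{2}$ is consistent with the statement as given.
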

\subsection{The extended flow}
Now we consider the extended combinatorial Ricci flow
\begin{equation}\label{Def-hyper-flow-u-extend}
\begin{cases}
{u_i}'(t)=-\widetilde{K}_i\\
\,u(0)\in \ln\tanh\frac{\Omega}{2}
\end{cases}.
\end{equation}

Similar to Lemma \ref{Lemma-edge-big-then-angle-zero}, we have
\begin{lemma}\label{Lemma-edge-big-then-angle-zero-extended}
For any $\epsilon>0$, there exists a number $l$ so that when $r_i>l$, the generalized inner angle $\tilde{\theta}_i$ in the generalized hyperbolic triangle $\triangle v_iv_jv_k$ is smaller than $\epsilon$.
\end{lemma}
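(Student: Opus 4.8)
The plan is to work directly with the explicit formula (\ref{formula-tilde-xita-hyper}) for the generalized angle, namely
\[
\tilde{\theta}_i=\Lambda\!\left(\frac{\cosh l_{ij}\cosh l_{ik}-\cosh l_{jk}}{\sinh l_{ij}\sinh l_{ik}}\right),
\]
where $l_{ij},l_{ik}$ are the two edges meeting at $v_i$ and $l_{jk}$ is the opposite edge. Since $\Lambda$ is continuous, non-increasing, and satisfies $\Lambda(1)=0$, it suffices to show that the argument $A$ of $\Lambda$ tends to $1$ as $r_i\to+\infty$, and crucially that this happens \emph{uniformly} in the remaining radii $r_j,r_k$. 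The uniformity is exactly what is needed to deploy the lemma in bounding radii from above along the extended flow (\ref{Def-hyper-flow-u-extend}), in the same way Lemma \ref{Lemma-edge-big-then-angle-zero} was used in Proposition \ref{Prop-uniform-up-bound}, since there no a priori upper bound on $r_j,r_k$ is yet available.

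First I would rewrite $1-A$ using the identity $\cosh l_{ij}\cosh l_{ik}-\sinh l_{ij}\sinh l_{ik}=\cosh(l_{ij}-l_{ik})$, obtaining
\[
1-A=\frac{\cosh l_{jk}-\cosh(l_{ij}-l_{ik})}{\sinh l_{ij}\sinh l_{ik}}.
\]
If the numerator is $\le 0$, then $A\ge 1$ and $\tilde{\theta}_i=\Lambda(A)=0$, so the only case requiring control is when it is positive, in which case $0\le 1-A\le \cosh l_{jk}/(\sinh l_{ij}\sinh l_{ik})$, using $\cosh(l_{ij}-l_{ik})\ge 1$.

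Next I would estimate the hyperbolic lengths in terms of the radii, using the hypothesis $I\ge 0$. Expanding $\sinh^2 l_{ij}=\cosh^2 l_{ij}-1$ with $\cosh l_{ij}=\cosh r_i\cosh r_j+I_{ij}\sinh r_i\sinh r_j$ and discarding the nonnegative cross terms yields the clean lower bound $\sinh l_{ij}\ge \sinh r_i\cosh r_j$, and likewise $\sinh l_{ik}\ge \sinh r_i\cosh r_k$. For the opposite edge, $I_{jk}\sinh r_j\sinh r_k\le I_{jk}\cosh r_j\cosh r_k$ gives $\cosh l_{jk}\le(1+I_{jk})\cosh r_j\cosh r_k$. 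Combining these, the factors $\cosh r_j\cosh r_k$ cancel and
\[
0\le 1-A\le \frac{1+I_{jk}}{\sinh^2 r_i},
\]
where $I_{jk}$ is fixed data of the triangle. This bound is independent of $r_j,r_k$, so $A\to 1$ uniformly. Finally, for $r_i$ large one has $\tilde{\theta}_i=\Lambda(A)\le\arccos\!\big(1-(1+I_{jk})/\sinh^2 r_i\big)$, and the right-hand side decreases to $\arccos 1=0$; so one simply chooses $l$ with $\arccos\!\big(1-(1+I_{jk})/\sinh^2 l\big)<\epsilon$, and then $r_i>l$ forces $\tilde{\theta}_i<\epsilon$.

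The main obstacle is precisely the uniformity over $r_j,r_k$, and the technical heart is the sharp lower bound $\sinh l_{ij}\ge\sinh r_i\cosh r_j$. It is worth stressing that the Euclidean analogue of this estimate \emph{fails}: if $r_j$ grows proportionally to $r_i$, the angle at $v_i$ need not shrink. What rescues the hyperbolic argument is the extra factor $\sinh^2 r_i$ in the denominator—reflecting an $e^{-2r_i}$ decay with no Euclidean counterpart—which dominates the growth of $\cosh l_{jk}$ coming from $r_j,r_k$.
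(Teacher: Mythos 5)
Your proof is correct, but it takes a genuinely different route from the paper's. The paper disposes of the lemma in two lines by a case analysis on which triangle inequality degenerates: for $r_i$ large one cannot have $l_{jk}\geq l_{ij}+l_{ik}$ (so $\tilde{\theta}_i\neq\pi$); if one of the other two inequalities fails then $\tilde{\theta}_i=0$ by definition; and otherwise $\tilde{\theta}_i=\theta_i$ and the claim reduces to Lemma \ref{Lemma-edge-big-then-angle-zero}, whose proof is outsourced to \cite{Ge-Xu4}. You instead work directly with the closed-form expression (\ref{formula-tilde-xita-hyper}), which absorbs all the degenerate cases into the single function $\Lambda$, and you prove the quantitative bound $0\leq 1-A\leq (1+I_{jk})/\sinh^2 r_i$ via the estimates $\sinh l_{ij}\geq\sinh r_i\cosh r_j$ and $\cosh l_{jk}\leq(1+I_{jk})\cosh r_j\cosh r_k$ (both of which check out, using $I\geq 0$ and $\cosh^2 r_j(\cosh^2 r_i-\sinh^2 r_i)\geq 1$). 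What your argument buys is a self-contained proof with an explicit rate and, more importantly, manifest uniformity in $r_j,r_k$ — which is exactly the property silently needed when the lemma is applied in Proposition \ref{Prop-uniform-up-bound}, where no upper bound on the neighboring radii is yet available; the paper's version inherits this uniformity only implicitly from the statement of Lemma \ref{Lemma-edge-big-then-angle-zero}. What the paper's version buys is brevity and the fact that it works for any $I$ making the lengths well defined, whereas your length estimates use $I\geq 0$ (which is a standing hypothesis here, so this costs nothing). Your closing remark about the failure of the Euclidean analogue is apt and explains why the Euclidean paper \cite{Ge-Jiang1} must argue differently at the corresponding step.
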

\begin{proof}
If $r_i$ is large enough, then $l_{ij}+l_{ik}>l_{jk}$. If further $l_{ij}+l_{jk}\leq l_{ik}$ or $l_{ik}+l_{jk}\leq l_{ij}$, then obviously $\tilde{\theta}_i=0$. Else, we have $\tilde{\theta}_i=\theta_i$, and in this case, using Lemma \ref{Lemma-edge-big-then-angle-zero} we get the conclusion.
\end{proof}

\begin{theorem}[Extended flow]\label{thm-long-term-existence-extended}
The solution $u(t)$ to the extended flow (\ref{Def-hyper-flow-u-extend}) exists for all time $t\geq 0$.
\end{theorem}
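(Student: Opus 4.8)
The plan is to regard (\ref{Def-hyper-flow-u-extend}) as an autonomous ODE on the open set $\mathds{R}^N_{<0}$ and to show that its maximal solution cannot reach the boundary of this set in finite time. First I would record the two structural facts that make this possible. Since every generalized inner angle obeys $\tilde\theta_i^{jk}\in[0,\pi]$ and each vertex lies on only finitely many faces, the extended curvature $\widetilde{K}_i=2\pi-\sum_{\{ijk\}\in F}\tilde\theta_i^{jk}$ satisfies
$$2\pi-\pi d_i\le \widetilde{K}_i\le 2\pi,$$
so there is a constant $c>0$, depending only on $\mathcal{T}$, with $|\widetilde{K}_i|\le c$ on all of $\mathds{R}^N_{<0}$; moreover $\widetilde{K}_i$ is continuous there by (\ref{formula-tilde-xita-hyper}) and Luo's continuity result. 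Thus the right-hand side $-\widetilde{K}$ is continuous and globally bounded. Let $[0,T_{\max})$ be the maximal interval of existence of the solution issuing from $u(0)\in\ln\tanh\frac{\Omega}{2}$, and suppose for contradiction that $T_{\max}<+\infty$.

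The next step is to produce a priori bounds that confine $u(t)$ to a fixed compact subset of $\mathds{R}^N_{<0}$ on $[0,T_{\max})$. The boundary of such a set has two pieces, $u_i\to-\infty$ (i.e. $r_i\to 0$) and $u_i\to 0^-$ (i.e. $r_i\to+\infty$), and each must be excluded separately. The first is immediate: integrating $|u_i'(t)|=|\widetilde{K}_i|\le c$ gives $u_i(t)\ge u_i(0)-cT_{\max}$, a finite lower bound, so no coordinate $u_i$ can reach $-\infty$ in finite time.

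The second, preventing $r_i\to+\infty$, is the heart of the matter, and I would handle it by a barrier argument in the spirit of Proposition \ref{Prop-uniform-up-bound}. Using Lemma \ref{Lemma-edge-big-then-angle-zero-extended} I fix $l>0$ so large that $r_i>l$ forces $\tilde\theta_i^{jk}<\pi/d_i$ for every face containing $i$, whence $\widetilde{K}_i>2\pi-d_i\cdot(\pi/d_i)=\pi>0$. Put $B_i=\max\{r_i(0),l\}$ and suppose $r_i(t_1)>B_i$ for some $t_1$. Letting $a=\sup\{t<t_1:r_i(t)\le B_i\}$, continuity gives $r_i(a)=B_i$ and $r_i>l$ on $(a,t_1]$; but on that interval $u_i'(t)=-\widetilde{K}_i<0$, so $u_i$, and hence $r_i$, is strictly decreasing, contradicting $r_i(t_1)>B_i=r_i(a)$. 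Therefore $r_i(t)\le B_i$ for all $t$, equivalently $u_i(t)\le\ln\tanh\frac{B_i}{2}<0$, which keeps $u_i$ bounded away from $0$.

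Finally, the two estimates together trap $u(t)$ in a fixed compact subset of $\mathds{R}^N_{<0}$ for every $t\in[0,T_{\max})$, contradicting the fact that a maximal solution of a continuous ODE must leave every compact subset of its domain as $t\uparrow T_{\max}$. Hence $T_{\max}=+\infty$. The main obstacle is precisely the upper bound: the linear estimate $|u_i'|\le c$ controls descent toward $-\infty$ but says nothing against $u_i\uparrow 0$, so the geometric input of Lemma \ref{Lemma-edge-big-then-angle-zero-extended} — that an oversized circle forces a definitely positive curvature which drives its own radius back down — is indispensable, and the one delicate point in transplanting Proposition \ref{Prop-uniform-up-bound} to the extended setting is verifying that this monotonicity survives the passage through degenerate configurations, which is exactly what the generalized-angle version of the lemma secures.
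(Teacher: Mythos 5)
Your proposal is correct and follows essentially the same route as the paper: a uniform bound on $\widetilde{K}$ rules out $r_i\to 0$ in finite time (this is exactly the content of Proposition \ref{Prop-positive-low-bound}, whose explicit lower bound is what your integration of $|u_i'|\le c$ reproduces), while Lemma \ref{Lemma-edge-big-then-angle-zero-extended} lets the barrier argument of Proposition \ref{Prop-uniform-up-bound} carry over to give the upper bound, after which the standard escape-from-compacta criterion forces $T_{\max}=+\infty$. The only difference is that you spell out the details the paper delegates to those two propositions.
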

\begin{proof}
On one hand, note all the extended curvatures $\widetilde{K}_i$ are uniformly bounded too. Then the proof in Proposition \ref{Prop-positive-low-bound} is still valid, and hence $r_i(t)>0$ on any finite time interval. On the other hand, by Lemma \ref{Lemma-edge-big-then-angle-zero-extended}, the proof in Proposition \ref{Prop-uniform-up-bound} can be moved here almost word for word. Hence all $r_i(t)$ are uniformly bounded above. Thus we get the conclusion above.
\end{proof}

\subsection{Combinatorial Ricci potential}
Now fix an arbitrary point $u_0\in \ln\tanh\frac{\Omega}{2}$. Assume $W_{ijk}(u_i,u_j,u_k)$ is defined as in Lemma \ref{lemma-guoren-define-W} and $\widetilde{W}_{ijk}(u_i,u_j,u_k)$ is defined as in (\ref{def-F-tuta-123}) with the same $c=(u_{0,i},u_{0,j},u_{0,k})$. For any $u\in \ln\tanh\frac{\Omega}{2}$, Guo \cite{Guoren} and Luo \cite{Luo1} introduced the following functional
\begin{equation}
F(u)\triangleq 2\pi\sum_{i=1}^N(u_i-u_{0,i})-\sum_{\{ijk\}\in F}W_{ijk}(u_i,u_j,u_k).
\end{equation}
We call above functional combinatorial Ricci potential in hyperbolic background geometry. By Lemma \ref{lemma-luo-essential}, the combinatorial Ricci potential, originally defined on $\ln\tanh\frac{\Omega}{2}$, can be extended $C^1$-smoothly to
\begin{equation}
\widetilde{F}(u)\triangleq 2\pi\sum_{i=1}^N(u_i-u_{0,i})-\sum_{\{ijk\}\in F}\widetilde{W}_{ijk}(u_i,u_j,u_k),
\end{equation}
which is defined for all $u\in\mathds{R}^N_{<0}$. It's easy to show that $\nabla_u F=K$ and $\nabla_u \widetilde{F}=\widetilde{K}$. In \cite{Luo1}, Luo proved
\begin{lemma}\label{prop-hessF}
$\widetilde{F}(u)\in C^1(\mathds{R}^N_{<0})\cap C^{\infty}(\ln\tanh\frac{\Omega}{2})$, and $\widetilde{F}(u)$ is convex on $\mathds{R}^N_{<0}$. Moreover, $hess\widetilde{F}$ is positive definite on $\ln\tanh\frac{\Omega}{2}$.
\end{lemma}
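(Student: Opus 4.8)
The plan is to assemble the statement from three ingredients that are already in place: Luo's $C^{1,1}$-extension of the triangle $1$-form (Lemma \ref{lemma-luo-essential}), the smoothness and local strict concavity of Guo's local potentials $W_{ijk}$ on the nondegenerate locus (Lemma \ref{lemma-guoren-define-W}), and the positive definiteness of the Jacobian $L=\partial K/\partial u$ (Guo's result, already cited in the proof of Theorem \ref{Thm-0curv-metric-exist-imply converg}). I do not expect any genuinely new analysis to be required; the work is in matching each claim to the piece of machinery that controls it.

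First I would settle the regularity. By Lemma \ref{lemma-luo-essential} each $\widetilde{W}_{ijk}$ lies in $C^1(\mathds{R}^3_{<0})$, and precomposing with the coordinate projection $(u_1,\dots,u_N)\mapsto(u_i,u_j,u_k)$, which is affine and hence smooth, preserves this. Since $\widetilde{F}$ is a finite sum of such terms together with the affine function $2\pi\sum_i(u_i-u_{0,i})$, we obtain $\widetilde{F}\in C^1(\mathds{R}^N_{<0})$. For smoothness on $\ln\tanh\frac{\Omega}{2}$, I would note that on this open set every face $\{ijk\}$ satisfies the triangle inequalities, so $(u_i,u_j,u_k)\in\ln\tanh\frac{\Delta_{ijk}}{2}$, where the extension agrees with the original, $\widetilde{W}_{ijk}=W_{ijk}$. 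Because the inner angles $\theta_i^{jk}$ are $C^\infty$ on $\ln\tanh\frac{\Delta_{ijk}}{2}$ by Lemma \ref{lemma-guoren-define-W}, so is each $W_{ijk}$, and therefore $\widetilde{F}$ restricted to $\ln\tanh\frac{\Omega}{2}$ is $C^\infty$.

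Next, for convexity on all of $\mathds{R}^N_{<0}$: each $\widetilde{W}_{ijk}$ is concave on $\mathds{R}^3_{<0}$ by Lemma \ref{lemma-luo-essential}, so $-\widetilde{W}_{ijk}$ is convex, and convexity is preserved under the affine coordinate projection (a concave $g$ pulled back by a linear map stays concave). A finite sum of convex functions plus the affine term is convex, so $\widetilde{F}$ is convex on $\mathds{R}^N_{<0}$. Finally, for the Hessian on $\ln\tanh\frac{\Omega}{2}$: there $\widetilde{F}$ is $C^\infty$ and $\nabla_u\widetilde{F}=\widetilde{K}=K$ (no degenerate triangles, so $\tilde{\theta}=\theta$), whence $\mathrm{hess}\,\widetilde{F}=\partial K/\partial u=L$, which is positive definite by Guo's computation. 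This yields the last assertion.

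The only point carrying real content is the positive definiteness of $L$, and it is imported from Guo rather than reproved here. Were one to insist on a self-contained argument, this is exactly where the obstacle would sit: each local Hessian $-\mathrm{hess}\,W_{ijk}$ is positive definite only on its three-variable block (by the local strict concavity in Lemma \ref{lemma-guoren-define-W}) and is merely positive \emph{semi}definite as an $N\times N$ matrix, so one would have to run a connectivity argument over the triangulation to verify that $\sum_{\{ijk\}\in F}\bigl(-\mathrm{hess}\,W_{ijk}\bigr)$ has no null direction. Everything else — the $C^1$ gluing across faces, the upgrade to $C^\infty$ on the nondegenerate locus, and the global convexity — is formal once Lemma \ref{lemma-luo-essential} is granted.
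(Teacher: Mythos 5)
Your proof is correct. The paper itself gives no argument for this lemma---it simply attributes it to Luo \cite{Luo1}---and your assembly from Lemma \ref{lemma-luo-essential} (for the $C^1$ regularity and concavity of each $\widetilde{W}_{ijk}$, hence global convexity of $\widetilde{F}$ after pulling back by the coordinate projections), Lemma \ref{lemma-guoren-define-W} (for $C^\infty$ smoothness on the nondegenerate locus, where $\widetilde{W}_{ijk}=W_{ijk}$ since both integrals can be taken along paths in the simply connected set $\ln\tanh\frac{\Delta_{ijk}}{2}$ from the common basepoint), and Guo's positive definiteness of $L=\partial K/\partial u$ is exactly the intended derivation. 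One small correction to your closing aside: no connectivity argument over the triangulation is needed to see that $\sum_{\{ijk\}\in F}(-\mathrm{hess}\,W_{ijk})$ is positive definite, because in the hyperbolic setting Lemma \ref{lemma-guoren-define-W} gives that each $3\times 3$ block is strictly negative definite, so a null direction $\xi$ of the sum must vanish on the three vertices of every face, and since every vertex of a triangulated closed surface lies in some face, $\xi=0$. (This is in contrast to the Euclidean case, where each block has the kernel $(1,1,1)$ and one only gets semidefiniteness modulo scaling.) Since you defer to Guo for this point anyway, it does not affect the validity of your proof.
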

We can't expect higher order smoothness of $\widetilde{F}$. Actually, by calculation one can easily find $\partial \theta_i^{jk}/\partial r_j=\infty$ at the ``triangle inequality invalid boundary", hence $\widetilde{K}_i$ is not $C^1$-smooth at the ``triangle inequality invalid boundary". It follows that $\widetilde{F}$ is not $C^2$-smooth if the ``triangle inequality invalid boundary" exists, or equivalently, $I_{ij}>1$ for at least one edge $i\thicksim j$.
\begin{proposition}\label{Prop-F-tura-infinity}
If there exists a zero curvature metric $r^*\in\Omega$, then $\lim\limits_{u\rightarrow\infty; \,u\in \mathds{R}^N_{<0}}\widetilde{F}(u)=+\infty.$
\end{proposition}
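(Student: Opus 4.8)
The plan is to exploit the convexity of $\widetilde{F}$ on $\mathds{R}^N_{<0}$ together with the positive-definiteness of its Hessian at the critical point coming from $r^*$, and then to invoke the standard principle that a convex function which is unbounded along every ray issuing from an interior point is coercive. Throughout, ``$u\to\infty$'' means $|u|\to\infty$ with $u\in\mathds{R}^N_{<0}$; since each coordinate of such $u$ is bounded above by $0$, this forces $u_i\to-\infty$ for some $i$, i.e.\ the escape directions are $v\in\mathds{R}^N_{\leq 0}\setminus\{0\}$.

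First I would set $u^*=\ln\tanh\frac{r^*}{2}$. Because $r^*\in\Omega$ and $\Omega$ is open, $u^*$ lies in the open smooth locus $\ln\tanh\frac{\Omega}{2}$, where the generalized angles coincide with the genuine ones, so $\widetilde{K}(u^*)=K(r^*)=0$. Hence $\nabla\widetilde{F}(u^*)=\widetilde{K}(u^*)=0$, so $u^*$ is a critical point; since $\widetilde{F}$ is convex on the convex set $\mathds{R}^N_{<0}$ by Lemma \ref{prop-hessF}, $u^*$ is in fact a global minimum, and by the same lemma $\Hess\widetilde{F}(u^*)$ is positive definite.

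Next I would show that $\widetilde{F}\to+\infty$ along every recession ray. Fix a unit vector $v\in\mathds{R}^N_{\leq 0}$ and put $g(t)=\widetilde{F}(u^*+tv)$; the whole ray stays in $\mathds{R}^N_{<0}$ since $(u^*+tv)_i=u^*_i+tv_i\leq u^*_i<0$ for $t\geq0$. Then $g$ is convex with $g'(0)=\nabla\widetilde{F}(u^*)\cdot v=0$, and, using that $u^*$ lies in the smooth locus, $g''(0)=v^{T}\Hess\widetilde{F}(u^*)\,v>0$. Thus $g'(t_0)>0$ for some small $t_0>0$, and since convexity makes $g'$ nondecreasing, $g(t)\geq g(t_0)+g'(t_0)(t-t_0)\to+\infty$.

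Finally, the main step is to upgrade ``unbounded along every ray'' to coercivity. Suppose not: there is a sequence $u_k\in\mathds{R}^N_{<0}$ with $t_k:=|u_k-u^*|\to\infty$ and $\widetilde{F}(u_k)\leq M$. Set $v_k=(u_k-u^*)/t_k$ and, after passing to a subsequence, let $v_k\to v$ with $|v|=1$. Any coordinate of $u_k-u^*$ that is positive stays bounded (as $u_{k,i}<0$ while $u^*_i$ is fixed), so after normalization it contributes $0$ to the limit; hence $v\in\mathds{R}^N_{\leq 0}$ is a genuine recession direction, and the ray $u^*+sv$ stays in $\mathds{R}^N_{<0}$. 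For fixed $s>0$ and $k$ large, $u^*+sv_k$ lies on the segment from $u^*$ to $u_k$, so by convexity $\widetilde{F}(u^*+sv_k)\leq\frac{t_k-s}{t_k}\widetilde{F}(u^*)+\frac{s}{t_k}M\to\widetilde{F}(u^*)$; letting $k\to\infty$ and using continuity of $\widetilde{F}$ gives $\widetilde{F}(u^*+sv)\leq\widetilde{F}(u^*)$ for all $s>0$, contradicting the previous paragraph. Therefore $\widetilde{F}$ is coercive, which is exactly the asserted limit. The main obstacle is precisely this last step: extracting the limiting direction $v$ and verifying that it truly lies in $\mathds{R}^N_{\leq 0}$ with its ray inside $\mathds{R}^N_{<0}$, so that the convexity comparison and the $C^1$-continuity of $\widetilde{F}$ from Lemma \ref{prop-hessF} can legitimately be applied; everything else is reduced to convexity, the positive-definiteness at $u^*$, and the elementary one-variable blow-up.
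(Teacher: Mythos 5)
Your proof is correct and follows essentially the same route as the paper: both exploit the convexity of $\widetilde{F}$ on $\mathds{R}^N_{<0}$, the vanishing gradient and positive-definite Hessian at $u^*\in\ln\tanh\frac{\Omega}{2}$, and a linear lower bound along rays emanating from $u^*$. The only real difference is the final step, where the paper invokes the elementary Lemma \ref{lemma-gotoinfinity} (whose proof it omits, citing earlier work) to pass from growth along rays to the full limit, whereas you give a self-contained normalization-and-convexity argument for that step, correctly verifying that the limiting directions are recession directions of $\mathds{R}^N_{<0}$.
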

\begin{proof}
Let $u^*\in\ln\tanh\frac{\Omega}{2}$ be the corresponding $u$-coordinate of $r^*$. Choose $\delta>0$, $\delta<dist(u^*, \partial\ln\tanh\frac{\Omega}{2})$. For each direction $\xi\in \mathbb{S}^{N-1}$, the ray $\overrightarrow{u^*+t\xi}$, $t\geq0$ intersect with $\partial\mathds{R}^N_{<0}$ (consider ``$\infty$" as one boundary point) at some point $u^*+a_{\xi}\xi$. Then $\varphi_{\xi}(t)=\widetilde{F}(u^*+t\xi)$ is well defined on the time interval $t\in [0,a_{\xi})$. It's easy to see $[0,\delta]\subset [0,a_{\xi})$. Since $\widetilde{F}$ is convex, then $\varphi_{\xi}(t)$ is convex on $[0,a_{\xi})$, hence $\varphi_{\xi}'(t)$ is non-decreasing on $[0,a_{\xi})$. Moreover, for $t\in[0,\delta]$, $\varphi_{\xi}''(t)=\xi^T hessF \xi>0$, this shows that $\varphi_{\xi}'(t)$ is strictly increasing on $[0,\delta]$. Note $\varphi_{\xi}'(0)=0$, then it follows $\varphi_{\xi}'(t)\geq\varphi_{\xi}'(\delta)>0$ for all $t\in[\delta,a_{\xi})$, which implies that $\varphi_{\xi}(t)$ is strictly monotone increasing on $t\in[0,a_{\xi})$. For $t\geq \delta$, we have $\varphi_{\xi}(t)\geq\varphi_{\xi}(\delta)+\varphi'_{\xi}(\delta)(t-\delta)$. If $a_{\xi}=+\infty$, let $t\rightarrow a_{\xi}$, then it follows $\varphi_{\xi}(t)\rightarrow+\infty$. By the following elementary Lemma \ref{lemma-gotoinfinity}, we get the conclusion above.
\end{proof}
\begin{lemma}\label{lemma-gotoinfinity}
Let $f\in C(\mathds{R}^n_{<0})$. Suppose there is a $u^*\in\mathds{R}^n_{<0}$, such that for any direction $\xi\in \mathbb{S}^{n-1}$, $f(u^*+t\xi)$ is monotone increasing for $t\in[0,a_{\xi})$, where $a_{\xi}$ is the time that the ray $\overrightarrow{u^*+t\xi}$ intersects with $\partial\mathds{R}^N_{<0}$, furthermore, $f(u^*+t\xi)$ tends to $+\infty$ as $t\rightarrow a_{\xi}$ whenever $a_{\xi}=+\infty$. Then $\lim\limits_{x\rightarrow\infty;\,x\in\mathds{R}^n_{<0}}f(x)=+\infty.$
\end{lemma}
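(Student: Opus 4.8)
The plan is to argue by contradiction, exploiting the compactness of the unit sphere to extract a limiting direction and then playing monotonicity along rays against continuity of $f$. Suppose the conclusion fails. Then there is a constant $M$ and a sequence $x_k\in\mathds{R}^n_{<0}$ with $|x_k-u^*|\to+\infty$ but $f(x_k)\leq M$ for every $k$. Writing $t_k=|x_k-u^*|\to+\infty$ and $\xi_k=(x_k-u^*)/t_k\in\mathbb{S}^{n-1}$, so that $x_k=u^*+t_k\xi_k$, I would pass to a subsequence (by compactness of $\mathbb{S}^{n-1}$) along which $\xi_k\to\xi_\infty$ for some $\xi_\infty\in\mathbb{S}^{n-1}$.

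The first real step is to show that the limiting direction satisfies $a_{\xi_\infty}=+\infty$, i.e. that every coordinate of $\xi_\infty$ is $\leq 0$. Indeed, if some coordinate $(\xi_\infty)_{i_0}>0$, then $(\xi_k)_{i_0}\geq(\xi_\infty)_{i_0}/2>0$ for all large $k$, whence $(x_k)_{i_0}=u^*_{i_0}+t_k(\xi_k)_{i_0}\to+\infty$ as $t_k\to+\infty$; this is impossible since $x_k\in\mathds{R}^n_{<0}$ forces $(x_k)_{i_0}<0$. Thus $a_{\xi_\infty}=+\infty$, so the hypothesis applies in the direction $\xi_\infty$ and gives $f(u^*+t\xi_\infty)\to+\infty$ as $t\to+\infty$.

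From here the contradiction is routine. Fix $T>0$ so large that $f(u^*+T\xi_\infty)>M+1$; this point lies in the open orthant, since all coordinates of $\xi_\infty$ are $\leq 0$ and hence $u^*_i+T(\xi_\infty)_i\leq u^*_i<0$. By continuity of $f$ there, $f(u^*+T\xi_k)\to f(u^*+T\xi_\infty)>M+1$, where each $u^*+T\xi_k\in\mathds{R}^n_{<0}$ for large $k$ because, once $t_k>T$, this point lies on the segment from $u^*$ to $x_k$ and the orthant is convex. Finally, applying the monotonicity hypothesis along the ray in direction $\xi_k$, with $0\leq T<t_k<a_{\xi_k}$, yields $f(x_k)=f(u^*+t_k\xi_k)\geq f(u^*+T\xi_k)>M$ for all large $k$, contradicting $f(x_k)\leq M$ and proving the lemma.

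The step I expect to be the main obstacle is the second paragraph: a priori the limiting direction $\xi_\infty$ could be one whose ray meets $\partial\mathds{R}^n_{<0}$ in finite time, and then the hypothesis (which only constrains $f$ along rays running to infinity) would be useless. The decisive observation is that keeping $x_k$ inside the negative orthant while $|x_k|\to+\infty$ forces $\xi_\infty$ to point into the orthant, so that $a_{\xi_\infty}=+\infty$ and the hypothesis becomes exactly applicable; everything else is a standard combination of monotonicity along $\xi_k$ with continuity of $f$ at the fixed interior point $u^*+T\xi_\infty$.
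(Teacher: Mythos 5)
Your proof is correct. The paper itself omits the argument (deferring to Lemma B.1 of \cite{Ge-Xu1} and Lemma 3.10 of \cite{Ge-Jiang1}), and your compactness-by-contradiction argument is exactly the standard proof those references use: extract a limiting direction $\xi_\infty$, observe that $x_k\in\mathds{R}^n_{<0}$ with $t_k\to+\infty$ forces every coordinate of $\xi_\infty$ to be $\le 0$ so that $a_{\xi_\infty}=+\infty$, and then pull back along the ray via monotonicity to the fixed radius $T$, where continuity at the interior point $u^*+T\xi_\infty$ applies. You correctly identify and resolve the only delicate point, namely that the hypothesis gives blow-up only along rays that stay in the orthant forever.
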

Since the proof of above lemma is similar with Lemma B.1 in \cite{Ge-Xu1} and Lemma 3.10 in \cite{Ge-Jiang1}. We omit the details here.

\begin{theorem}\label{thm-unique-zero-curvature}
Assume there exists a zero curvature metric $r^*\in\Omega$. Then it is unique in $\mathds{R}^N_{>0}$.
\end{theorem}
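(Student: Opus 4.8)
The plan is to work entirely in the $u$-coordinate and reduce the uniqueness of the zero curvature metric to the uniqueness of a critical point of the extended combinatorial Ricci potential $\widetilde{F}$. Recall that $\nabla_u\widetilde{F}=\widetilde{K}$ and that $\widetilde{K}$ agrees with the ordinary curvature $K$ on $\ln\tanh\frac{\Omega}{2}$. Hence the point $u^*=\ln\tanh\frac{r^*}{2}\in\ln\tanh\frac{\Omega}{2}$ corresponding to the hypothesized metric $r^*$ satisfies $\nabla_u\widetilde{F}(u^*)=\widetilde{K}(u^*)=K(r^*)=0$, so $u^*$ is a critical point of $\widetilde{F}$. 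Conversely, any zero curvature metric $\bar{r}\in\mathds{R}^N_{>0}$ corresponds to a point $\bar{u}=\ln\tanh\frac{\bar{r}}{2}\in\mathds{R}^N_{<0}$ with $\nabla_u\widetilde{F}(\bar{u})=\widetilde{K}(\bar{u})=0$, i.e. again a critical point. Thus it suffices to show that $\widetilde{F}$ has exactly one critical point in $\mathds{R}^N_{<0}$, namely $u^*$.

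Next I would exploit the global convexity of $\widetilde{F}$ on $\mathds{R}^N_{<0}$ guaranteed by Lemma \ref{prop-hessF}. For a $C^1$ convex function every critical point is automatically a global minimizer: if $\nabla_u\widetilde{F}(\bar{u})=0$, then $\widetilde{F}(v)\geq\widetilde{F}(\bar{u})+\langle\nabla_u\widetilde{F}(\bar{u}),v-\bar{u}\rangle=\widetilde{F}(\bar{u})$ for every $v\in\mathds{R}^N_{<0}$. Consequently both $u^*$ and any other critical point $\bar{u}$ realize the same minimal value $m=\min_{\mathds{R}^N_{<0}}\widetilde{F}$. Since the set of minimizers of a convex function is convex, the entire segment $\{(1-s)u^*+s\bar{u}\mid s\in[0,1]\}$ consists of minimizers, so $\widetilde{F}$ is constant and equal to $m$ along this segment.

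The concluding step, which is also the crux, is to rule out such a nondegenerate segment by invoking the local strict convexity of $\widetilde{F}$ at $u^*$. Since $r^*\in\Omega$ and $\Omega$ is open, $u^*$ lies in the open set $\ln\tanh\frac{\Omega}{2}$, where $\mathrm{hess}\,\widetilde{F}$ is positive definite (Lemma \ref{prop-hessF}). Writing $\xi=(\bar{u}-u^*)/|\bar{u}-u^*|$ and $\varphi(s)=\widetilde{F}(u^*+s\xi)$, for small $s$ the function $\varphi$ is $C^\infty$ with $\varphi''(0)=\xi^T\,\mathrm{hess}\,\widetilde{F}(u^*)\,\xi>0$; hence $\varphi$ is strictly convex near $0$ and cannot be constant on any interval $[0,\epsilon]$. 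This contradicts the constancy of $\widetilde{F}$ along the segment unless $\bar{u}=u^*$. Therefore $u^*$ is the unique critical point of $\widetilde{F}$, and translating back through the coordinate change $u=\ln\tanh\frac{r}{2}$ yields the uniqueness of the zero curvature metric in $\mathds{R}^N_{>0}$.

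The main obstacle is precisely the gap between global convexity and global strict convexity: $\widetilde{F}$ is only $C^1$ (not $C^2$) at the triangle-inequality-invalid boundary, and positive definiteness of $\mathrm{hess}\,\widetilde{F}$ is known only on $\ln\tanh\frac{\Omega}{2}$, so a direct strict-convexity argument over all of $\mathds{R}^N_{<0}$ is unavailable. The device above, combining global convexity (forcing all critical points to be global minima that fill out a convex set on which $\widetilde{F}$ is constant) with the local strict convexity at the known interior critical point $u^*$, is exactly what bridges this gap; note that properness of $\widetilde{F}$ from Proposition \ref{Prop-F-tura-infinity} is not needed here, since existence of the minimizer is supplied directly by the hypothesis on $r^*$.
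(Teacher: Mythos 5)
Your proof is correct and rests on exactly the same ingredients as the paper's: the global convexity of $\widetilde{F}$ on $\mathds{R}^N_{<0}$ together with the positive definiteness of $\mathrm{hess}\,\widetilde{F}$ near the interior point $u^*$ (Lemma \ref{prop-hessF}). The paper packages this as monotonicity of $\varphi_{\xi}'(t)$ along rays emanating from $u^*$ (recycled from the proof of Proposition \ref{Prop-F-tura-infinity}), whereas you phrase it via the convexity of the minimizer set; this is a cosmetic difference, and your observation that the properness of $\widetilde{F}$ is not actually needed for uniqueness is accurate.
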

\begin{proof}
Note $\varphi_{\xi}'(t)=\xi^T\widetilde{K}$. Whenever there is a zero curvature metric $\hat{u}$ in $\mathds{R}^N_{<0}$, there corresponds a $\hat{t}\in[0,a_{\xi})$ with $\varphi_{\xi}'(\hat{t})=0$. However, $\varphi_{\xi}'(t)$ has the unique zero point at $t=0$ from the proof above. Hence the zero curvature metric is unique.
\end{proof}
\subsection{Proof of Theorem \ref{Thm-converge-introduction}} We just need to prove the following convergence result for the extended flow.
\begin{theorem}[Long time convergence]\label{Thm-converge-zhengwen}
Given $(M, \mathcal{T})$ with inversive distance $I\geq 0$ in hyperbolic background geometry. If there exists a metric of zero curvature $u^*\in\ln\tanh\frac{\Omega}{2}$, then the solution $u(t)$ to the extended flow (\ref{Def-hyper-flow-u-extend}) converges exponentially fast to $u^*$ as $t\rightarrow+\infty$.
\end{theorem}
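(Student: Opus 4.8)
The plan is to recognize the extended flow (\ref{Def-hyper-flow-u-extend}) as the negative gradient flow $\dot u=-\widetilde{K}=-\nabla_u\widetilde{F}$ of the extended combinatorial Ricci potential $\widetilde{F}$. Because $u^*$ is a zero curvature metric lying in $\ln\tanh\frac{\Omega}{2}$, we have $\nabla_u\widetilde{F}(u^*)=\widetilde{K}(u^*)=0$, and since $\widetilde{F}$ is convex on $\mathds{R}^N_{<0}$ (Lemma \ref{prop-hessF}) this critical point is automatically a global minimum. I would split the argument into two stages: first prove $u(t)\to u^*$, then upgrade the convergence to an exponential rate using the positive definiteness of the Hessian near $u^*$.

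For the convergence I plan to monitor the Euclidean distance to $u^*$. Using convexity of $\widetilde{F}$ together with $\nabla_u\widetilde{F}(u^*)=0$, the gradient monotonicity inequality gives
\[
\frac{d}{dt}\tfrac12|u(t)-u^*|^2=-(u(t)-u^*)\cdot\widetilde{K}(u(t))=-(u(t)-u^*)\cdot\big(\nabla_u\widetilde{F}(u(t))-\nabla_u\widetilde{F}(u^*)\big)\leq0,
\]
so $|u(t)-u^*|$ is non-increasing; in particular the trajectory is bounded and has accumulation points. Simultaneously $\widetilde{F}(u(t))$ is non-increasing with $\frac{d}{dt}\widetilde{F}(u(t))=-|\widetilde{K}(u(t))|^2$, and it is bounded below by the minimum value $\widetilde{F}(u^*)$, so $\int_0^{\infty}|\widetilde{K}(u(t))|^2\,dt<+\infty$. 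This lets me extract times $t_n\uparrow+\infty$ with $\widetilde{K}(u(t_n))\to0$ and, after passing to a subsequence, $u(t_n)\to\bar u$. Continuity of $\widetilde{K}$ forces $\widetilde{K}(\bar u)=0$, so $\bar u$ is a zero curvature metric and hence $\bar u=u^*$ by the uniqueness Theorem \ref{thm-unique-zero-curvature}. Since $|u(t)-u^*|$ is monotone and has a subsequence tending to $0$, the full limit $u(t)\to u^*$ follows.

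To obtain the exponential rate I would localize at $u^*$. As $u^*\in\ln\tanh\frac{\Omega}{2}$, an open set on which $\widetilde{F}\in C^{\infty}$ and $\mathrm{hess}\,\widetilde{F}$ is positive definite (Lemma \ref{prop-hessF}), the linearization of the flow at $u^*$ is $\dot v=-L\,v$ with $L=\mathrm{hess}\,\widetilde{F}(u^*)$ symmetric and positive definite. Thus $-L$ has spectrum in the open left half-plane and $u^*$ is an exponentially stable equilibrium, exactly as in the local analysis of Theorem \ref{Thm-0curv-metric-exist-imply converg}(2). Once the convergence already established drives $u(t)$ into the neighborhood of $u^*$ governed by this linear estimate, a standard Lyapunov comparison yields $|u(t)-u^*|\leq Ce^{-\lambda t}$ for some $C,\lambda>0$.

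The chief difficulty to keep in view is the limited regularity of $\widetilde{F}$: it is merely $C^1$ on $\mathds{R}^N_{<0}$ and fails to be $C^2$ along the ``triangle inequality invalid boundary'' whenever some $I_{ij}>1$, so the smooth linearization machinery is unavailable globally. What rescues the exponential step is that $u^*$ itself sits in the smooth locus $\ln\tanh\frac{\Omega}{2}$, confining all the second-order analysis to a neighborhood on which $\widetilde{F}$ is genuinely $C^{\infty}$; the first stage, by contrast, uses only the $C^1$ convexity inequality and the continuity of $\widetilde{K}$, both of which hold on all of $\mathds{R}^N_{<0}$, with global-in-time existence guaranteed by Theorem \ref{thm-long-term-existence-extended}.
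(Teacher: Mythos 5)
Your argument is correct in outline and shares the paper's skeleton (the Lyapunov function $\widetilde{F}$ decreases along the flow, the orbit is precompact, a subsequence converges to the unique critical point, and local exponential stability at $u^*$ finishes the job), but you reach precompactness and full convergence by a genuinely different mechanism. The paper invokes Proposition \ref{Prop-F-tura-infinity} (properness of $\widetilde{F}$ on $\mathds{R}^N_{<0}$) to bound $\|u(t)\|$ from the monotone decrease of $\widetilde{F}$, and then leans on the asymptotic stability of Theorem \ref{Thm-0curv-metric-exist-imply converg} to pass from subsequential to full convergence. You instead exploit the convexity of $\widetilde{F}$ through the gradient monotonicity inequality to show that $|u(t)-u^*|$ is non-increasing; this bounds the orbit without Proposition \ref{Prop-F-tura-infinity} and simultaneously upgrades $u(t_n)\to u^*$ to $u(t)\to u^*$. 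That is a slightly more self-contained route, since properness is really only needed when no zero-curvature metric is assumed in advance, whereas here $u^*$ exists by hypothesis.

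One point you should patch: boundedness of $|u(t)-u^*|$ confines the orbit to a bounded subset of $\mathds{R}^N_{<0}$, but the closure of such a set meets the face $\{u_i=0\}$ of $\partial\mathds{R}^N_{<0}$, which corresponds to $r_i\to+\infty$. Your accumulation point $\bar u$ could a priori land there, where $\widetilde{K}$ is not defined and the continuity argument $\widetilde{K}(\bar u)=0$ breaks down. You need the uniform upper bound on the $r_i(t)$, equivalently a uniform bound $u_i(t)\le -\delta<0$, which is exactly what Lemma \ref{Lemma-edge-big-then-angle-zero-extended} combined with the argument of Proposition \ref{Prop-uniform-up-bound} provides, and what the paper cites at this step. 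With that reference inserted, your compactness and limit identification via Theorem \ref{thm-unique-zero-curvature} go through, and your final exponential-rate step via linearization at $u^*\in\ln\tanh\frac{\Omega}{2}$ coincides with the paper's appeal to Theorem \ref{Thm-0curv-metric-exist-imply converg}.
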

\begin{proof}
By Proposition \ref{Prop-F-tura-infinity}, $\widetilde{F}(u)$ tends to $+\infty$ as $\|u\|$ tends to $+\infty$. It's easy to see $\frac{d}{dt}\widetilde{F}(u(t))=-\|\widetilde{K}\|^2\leq 0$ along flow (\ref{Def-hyper-flow-u-extend}) and then $\widetilde{F}(u(t))$ is decreasing. Hence $\|u(t)\|$ is uniformly bounded from above. This implies that all $r_i(t)$ are uniformly bounded from below from a positive constant. By Proposition \ref{Prop-uniform-up-bound}, all $r_i(t)$ are uniformly bounded from above. Thus the solution $\{r(t)\}$ lies in a compact region in $\mathds{R}^N_{>0}$. Then it's easy to show that there exists a sequence $t_n\uparrow+\infty$ so that $u(t_n)\rightarrow u^*$. By Theorem \ref{Thm-0curv-metric-exist-imply converg}, $u^*$ is the unique asymptotically stable point of the flow (\ref{Def-hyper-flow-u-extend}), and for some sufficient big $t_{n_0}$, the solution $\{u(t)\}_{t\ge t_{n_0}}$ converges exponentially fast to $u^*$, i.e., the original flow $\{u(t)\}_{t\ge 0}$ converges exponentially fast to the zero curvature metric $u^\ast$.
\end{proof}
\begin{remark}
Theorem \ref{thm-long-term-existence-extended} and Theorem \ref{Thm-converge-zhengwen} are still true, for arbitrary initial value $u(0)\in\mathds{R}^N_{<0}$ in (\ref{Def-hyper-flow-u-extend}).
\end{remark}
\subsection{Deform inversive distance circle pattern to prescribed curvature} We can deform any inversive distance circle pattern to a pattern with prescribed curvatures if it is admissible.
\begin{definition}
Denote $K(\Omega)\triangleq\big\{K(r)|r\in \Omega\big\}$. Each prescribed $\bar{K}$ with $\bar{K}\in K(\Omega)$ is called admissible. If $\bar{r}\in\Omega$ such that $\bar{K}=K(\bar{r})$, we say $\bar{K}$ is realized by $\bar{r}$.
\end{definition}

Given a triangulated surface $(M, \mathcal{T})$ with inversive distance $I\geq 0$ in hyperbolic background geometry. Let $\bar{K}\in\mathds{R}^N$ be any prescribed curvature, consider the prescribed flow
\begin{equation}\label{Def-hyper-flow-u-prescribed}
\begin{cases}
{u_i}'(t)=\bar{K}_i-K_i\\
\,u(0)\,\in\ln\tanh\frac{\Omega}{2}
\end{cases},
\end{equation}
then Proposition \ref{Prop-maxm-exists-time} and Proposition \ref{Prop-positive-low-bound} are true. If further assume $\bar{K}_i<2\pi$ for all $i\in V$, then Proposition \ref{Prop-uniform-up-bound} is also true. Note that to get a uniform upper bound for $r_i(t)$, some assumptions on $\bar{K}_i$, such like $\bar{K}_i<2\pi$, is necessary. If all $\bar{K}_i>2\pi$, then $r_i'(t)=(\bar{K}_i-K_i)\sinh r_i>c>0$, hence $r_i(t)$ is strictly increasing, and can't uniformly bounded from above. 


Theorem \ref{Thm-0curv-metric-exist-imply converg} now can be generalized to
\begin{theorem}\label{Thm-0curv-metric-exist-imply-converg-prescribed}
Given $(M, \mathcal{T})$ with inversive distance $I\geq 0$ in hyperbolic background geometry.
\begin{enumerate}
  \item If the solution $\{r(t)\}\subset\Omega$ to flow (\ref{Def-hyper-flow-u-prescribed}) converges to a metric $\bar{r}$, then $\bar{r}$ realized $\bar{K}$, which implies that $\bar{K}$ is admissible.
  \item Conversely, assume $\bar{K}$ is admissible. Then the solution $r(t)$ to flow (\ref{Def-hyper-flow-u-prescribed}) exists for all time $t\geq0$ and converges exponentially fast to $\bar{r}$ if $r(0)$ is close enough to $\bar{r}$.
\end{enumerate}
\end{theorem}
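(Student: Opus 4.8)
The plan is to mirror the proof of Theorem~\ref{Thm-0curv-metric-exist-imply converg} essentially verbatim, exploiting the fact that flow (\ref{Def-hyper-flow-u-prescribed}) is, in the $u$-coordinate, the autonomous system $\dot u=\bar K-K$ in which the prescribed curvature $\bar K$ enters only as an additive constant. Thus the equilibria are exactly the $u\in\ln\tanh\frac{\Omega}{2}$ with $K(u)=\bar K$, and the linearization is insensitive to the constant shift $\bar K$.

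For part (1), I would invoke the elementary fact that a convergent orbit of an autonomous system must limit onto a zero of its (continuous) vector field. Concretely, if $\{r(t)\}\subset\Omega$ converges to the metric $\bar r$, then the limit $\bar u=\ln\tanh\frac{\bar r}{2}$ lies in $\ln\tanh\frac{\Omega}{2}$, where $\bar K-K$ is smooth; were $\bar K_i-K_i(\bar u)\neq 0$ for some $i$, the coordinate $u_i(t)$ would keep drifting and could not converge. Hence $K(\bar r)=\bar K$, so $\bar r\in\Omega$ realizes $\bar K$ and $\bar K\in K(\Omega)$ is admissible. The only point requiring care is that the limit $\bar r$ genuinely belongs to $\Omega$ (so that $K$ is defined and continuous at $\bar r$), which is precisely what is meant by the hypothesis that $r(t)$ converges to a metric.

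For part (2), assume $\bar K$ is admissible, pick $\bar r\in\Omega$ with $K(\bar r)=\bar K$, and let $\bar u$ be its $u$-coordinate, an equilibrium of (\ref{Def-hyper-flow-u-prescribed}). Since $\bar K$ is constant, differentiating the right-hand side at $\bar u$ gives
$$
D_u(\bar K-K)\big|_{\bar u}=-\frac{\partial(K_1,\cdots,K_N)}{\partial(u_1,\cdots,u_N)}\bigg|_{\bar u}=-L .
$$
By Guo Ren's result that $L$ is positive definite at every point of $\ln\tanh\frac{\Omega}{2}$ (the same input used in Theorem~\ref{Thm-0curv-metric-exist-imply converg}), the matrix $-L$ is negative definite, so every eigenvalue of the linearization has strictly negative real part. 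By the Lyapunov stability theorem, $\bar u$ is an asymptotically stable equilibrium: there is a neighborhood of $\bar u$, contained in the open set $\ln\tanh\frac{\Omega}{2}$, so that any solution starting in it stays there for all $t\geq 0$ (hence exists globally, never reaching $\partial\Omega$) and tends to $\bar u$ exponentially fast. Translating back to the $r$-coordinate yields the claimed global existence and exponential convergence to $\bar r$ whenever $r(0)$ is close enough to $\bar r$.

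I do not expect any serious obstacle, since the entire argument is a constant-shift perturbation of Theorem~\ref{Thm-0curv-metric-exist-imply converg}; the positive-definiteness of $L$ is the single nontrivial input and it is already available. The one genuine difference worth flagging is that, unlike the zero-curvature case where properness of the extended potential $\widetilde F$ (Proposition~\ref{Prop-F-tura-infinity}) forces \emph{global} convergence, for a general admissible $\bar K$ we obtain only a local statement, which is exactly why the hypothesis that $r(0)$ be close enough to $\bar r$ cannot be dropped.
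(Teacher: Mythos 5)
Your proposal is correct and follows essentially the same route as the paper: the paper explicitly omits the proof as ``similar with previous sections,'' meaning the argument of Theorem \ref{Thm-0curv-metric-exist-imply converg} (limit of a convergent orbit is an equilibrium; linearization equals $-L$, negative definite by Guo's result; Lyapunov stability gives local exponential convergence), which is exactly what you carry out with the constant shift $\bar K$. Your closing remark correctly identifies why only a local statement is available here, but no further content is needed.
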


By a similar analysis into the following prescribed discrete Ricci potential
\begin{equation}
\widetilde{F}_p(u)\triangleq \sum_{i=1}^N(2\pi-\bar{K}_i)(u_i-u_{0,i})-\sum_{\{ijk\}\in F}\widetilde{W}_{ijk}(u_i,u_j,u_k),
\end{equation}
we can generalize Theorem \ref{thm-long-term-existence-extended}, Theorem \ref{thm-unique-zero-curvature} and Theorem \ref{Thm-converge-zhengwen} to the following three theorems.

\begin{theorem}
Given $(M, \mathcal{T})$ with inversive distance $I\geq 0$ in hyperbolic background geometry. Let $\bar{K}\in\mathds{R}^N $ be any prescribed curvature, consider the extended prescribed flow
\begin{equation}\label{Def-hyper-flow-u-extended-prescribed}
\begin{cases}
{u_i}'(t)=\bar{K}_i-\widetilde{K}_i\\
\,u(0)\,\in\,\mathds{R}^N_{<0}
\end{cases}.
\end{equation}
If $\bar{K}_i<2\pi$ for all $i\in V$, then every solution $u(t)$ to (\ref{Def-hyper-flow-u-extended-prescribed}) exists for all time $t\geq 0$.
\end{theorem}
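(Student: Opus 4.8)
The plan is to mirror the proof of Theorem \ref{thm-long-term-existence-extended}: I would establish a positive lower bound and a uniform upper bound for each $r_i(t)$ on finite time intervals, and then invoke the standard ODE escape argument to conclude that the maximal existence time is $+\infty$. Since $\widetilde{K}$ is only continuous (not locally Lipschitz) on $\mathds{R}^N_{<0}$, a solution is guaranteed by Peano's theorem but need not be unique; accordingly the statement is phrased for \emph{every} solution, and the two-sided bounds derived below apply to any of them on its maximal interval $[0,T)$.

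First I would treat the lower bound. Each generalized angle satisfies $\tilde{\theta}_i^{jk}\in[0,\pi]$, so $\widetilde{K}_i=2\pi-\sum_{\{ijk\}\in F}\tilde{\theta}_i^{jk}$ is uniformly bounded; since $\bar{K}$ is a fixed vector, the right-hand side $\bar{K}_i-\widetilde{K}_i$ is bounded by a constant $c>0$ depending only on $\mathcal{T}$ and $\bar{K}$. From $u_i'=\bar{K}_i-\widetilde{K}_i$ we get $u_i(t)\geq u_i(0)-ct$, which is finite on any finite interval; converting back via $u_i=\ln\tanh\frac{r_i}{2}$ reproduces exactly the estimate in Proposition \ref{Prop-positive-low-bound} and yields a positive lower bound for $r_i(t)$ on $[0,a)\cap[0,T)$ for every $a<+\infty$.

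The upper bound is where the hypothesis $\bar{K}_i<2\pi$ enters, and I expect it to be the main point of the argument. Set $\epsilon_0=\min_{i\in V}(2\pi-\bar{K}_i)>0$ (finite over finitely many vertices) and fix $\epsilon\in(0,\epsilon_0)$. By Lemma \ref{Lemma-edge-big-then-angle-zero-extended} there is an $l>0$ so that $r_i>l$ forces $\tilde{\theta}_i^{jk}<\epsilon/d_i$ for every face at $i$, hence $\widetilde{K}_i>2\pi-\epsilon>\bar{K}_i$, where $d_i$ is the valence at $i$. Consequently $r_i'(t)=(\bar{K}_i-\widetilde{K}_i)\sinh r_i<0$ whenever $r_i(t)>l$, so $l$ acts as a barrier: exactly as in the proof of Proposition \ref{Prop-uniform-up-bound}, once $r_i$ reaches $l$ it can only decrease, giving the uniform bound $r_i(t)\leq\max\{r_i(0),l\}$ for all $t\in[0,T)$. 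Without the assumption $\bar{K}_i<2\pi$ this step fails, as already noted after (\ref{Def-hyper-flow-u-prescribed}).

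Finally I would combine the two bounds: on any finite subinterval of $[0,T)$ the trajectory $u(t)$ stays in a compact subset of $\mathds{R}^N_{<0}$, on which $\widetilde{K}$ is bounded, so if $T<+\infty$ then $u'(t)$ is bounded near $T$ and $\lim_{t\uparrow T}u(t)$ exists and lies in $\mathds{R}^N_{<0}$; Peano's theorem then extends the solution beyond $T$, contradicting maximality, whence $T=+\infty$. I anticipate no serious obstacle beyond correctly calibrating $\epsilon$ relative to $\epsilon_0$ so that the barrier inequality $\widetilde{K}_i>\bar{K}_i$ holds simultaneously at every vertex; the remainder transcribes, almost word for word, the finite-vertex and bounded-curvature arguments already used for the extended flow.
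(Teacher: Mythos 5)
Your proposal is correct and follows exactly the route the paper intends: the paper omits the proof but points to Theorem \ref{thm-long-term-existence-extended} together with its earlier remark that Proposition \ref{Prop-positive-low-bound} carries over verbatim and that Proposition \ref{Prop-uniform-up-bound} survives under the hypothesis $\bar{K}_i<2\pi$, which is precisely your lower-bound estimate and your barrier argument calibrated by $\epsilon<\min_i(2\pi-\bar{K}_i)$. Your added care about Peano existence versus uniqueness for the merely continuous $\widetilde{K}$ is consistent with the paper's own open question on uniqueness of the extended flow and does not change the argument.
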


\begin{theorem}\label{thm-unique-admissible-curvature}
Any curvature $\bar{K}\in K(\Omega)$ is realized by an unique metric $\bar{r}$ in the extended space $\mathds{R}^N_{>0}$.
\end{theorem}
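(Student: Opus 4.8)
The plan is to reduce the claim to the uniqueness of a critical point of the prescribed Ricci potential $\widetilde{F}_p$, reusing almost verbatim the ray-monotonicity mechanism behind Proposition \ref{Prop-F-tura-infinity} and Theorem \ref{thm-unique-zero-curvature}. The starting observation is that $\nabla_u\widetilde{F}_p=\widetilde{K}-\bar{K}$, so a metric $\hat{u}\in\mathds{R}^N_{<0}$ realizes $\bar{K}$ in the extended sense $\widetilde{K}(\hat{u})=\bar{K}$ precisely when $\hat{u}$ is a critical point of $\widetilde{F}_p$. Since $\widetilde{F}_p$ differs from $\widetilde{F}$ only by the linear term $-\sum_i\bar{K}_i(u_i-u_{0,i})$, whose Hessian vanishes, Lemma \ref{prop-hessF} transfers directly: $\widetilde{F}_p\in C^1(\mathds{R}^N_{<0})$ is convex on all of $\mathds{R}^N_{<0}$, and $hess\widetilde{F}_p$ is positive definite on $\ln\tanh\frac{\Omega}{2}$.

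Existence is essentially free. Because $\bar{K}\in K(\Omega)$, by definition there is $\bar{r}\in\Omega$ with $K(\bar{r})=\bar{K}$; and since $\widetilde{K}=K$ throughout $\Omega$, the corresponding point $\bar{u}^*\in\ln\tanh\frac{\Omega}{2}$ already realizes $\bar{K}$ in the extended space $\mathds{R}^N_{>0}$. The entire content of the statement is therefore uniqueness, and $\bar{u}^*$ supplies a distinguished critical point lying in the interior region $\ln\tanh\frac{\Omega}{2}$ from which I would launch the ray argument.

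For uniqueness I would fix a direction $\xi\in\mathbb{S}^{N-1}$ and set $\varphi_\xi(t)=\widetilde{F}_p(\bar{u}^*+t\xi)$ for $t\in[0,a_\xi)$, where $a_\xi$ is the first time the ray $\overrightarrow{\bar{u}^*+t\xi}$ meets $\partial\mathds{R}^N_{<0}$ (with ``$\infty$'' counted as a boundary point). Choosing $0<\delta<dist(\bar{u}^*,\partial\ln\tanh\frac{\Omega}{2})$ keeps the segment $[0,\delta]$ inside $\ln\tanh\frac{\Omega}{2}$. Then $\varphi_\xi'(0)=\xi^T\nabla\widetilde{F}_p(\bar{u}^*)=0$, while $\varphi_\xi''(t)=\xi^T hess\widetilde{F}_p\,\xi>0$ on $[0,\delta]$, so $\varphi_\xi'$ is strictly increasing there; global convexity of $\widetilde{F}_p$ makes $\varphi_\xi'$ non-decreasing on all of $[0,a_\xi)$. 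Combining these, $\varphi_\xi'(t)\geq\varphi_\xi'(\delta)>0$ for every $t>0$, so $t=0$ is the unique zero of $\varphi_\xi'$ along each ray. Finally, if $\hat{u}\in\mathds{R}^N_{<0}$ realizes $\bar{K}$ and $\hat{u}\neq\bar{u}^*$, write $\hat{u}=\bar{u}^*+\hat{t}\xi$ with $\xi=(\hat{u}-\bar{u}^*)/\|\hat{u}-\bar{u}^*\|$ and $\hat{t}=\|\hat{u}-\bar{u}^*\|\in(0,a_\xi)$; then $\varphi_\xi'(\hat{t})=\xi^T(\widetilde{K}(\hat{u})-\bar{K})=0$ contradicts the strict positivity above. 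Hence $\hat{u}=\bar{u}^*$.

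The step I expect to be delicate is the interface between the merely $C^1$, globally convex behavior of $\widetilde{F}_p$ on all of $\mathds{R}^N_{<0}$ and its genuine $C^2$, strictly convex behavior only on $\ln\tanh\frac{\Omega}{2}$: the Hessian of $\widetilde{F}_p$ degenerates at the triangle-inequality-invalid boundary, so one cannot argue $\varphi_\xi''>0$ everywhere along the ray. The decisive point, exactly as in Proposition \ref{Prop-F-tura-infinity}, is that strict positivity of $\varphi_\xi'$ need only be established on the initial interior segment $[0,\delta]$ and is then propagated to all larger $t$ by the global non-strict monotonicity of $\varphi_\xi'$ coming from plain convexity. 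This is what lets the argument conclude uniqueness in the full extended space $\mathds{R}^N_{>0}$ rather than merely inside $\Omega$.
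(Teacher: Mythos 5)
Your proposal is correct and follows essentially the same route the paper intends: it explicitly points to the prescribed potential $\widetilde{F}_p$ with $\nabla_u\widetilde{F}_p=\widetilde{K}-\bar{K}$ and to rerunning the ray-monotonicity argument of Proposition \ref{Prop-F-tura-infinity} and Theorem \ref{thm-unique-zero-curvature} (the paper omits the details as ``similar with previous sections''). The only nitpick is the phrase ``$\varphi_\xi'(t)\geq\varphi_\xi'(\delta)>0$ for every $t>0$,'' which should read that $\varphi_\xi'>0$ on $(0,\delta]$ by strict monotonicity there and $\varphi_\xi'(t)\geq\varphi_\xi'(\delta)>0$ for $t\geq\delta$ by convexity; the conclusion that $t=0$ is the unique zero of $\varphi_\xi'$ is unaffected.
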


\begin{theorem}
Given $(M, \mathcal{T})$ with inversive distance $I\geq 0$ in hyperbolic background geometry. Assume $u(t)$ is a solution to (\ref{Def-hyper-flow-u-extended-prescribed}). If $\bar{K}$ is admissible, then $u(t)$ converges exponentially fast to some $\bar{u}\in\ln\tanh\frac{\Omega}{2}$ as $t\rightarrow+\infty$, and $\bar{u}$ is the unique metric that realized $\bar{K}$. Conversely, if
$u(t)$ converges to some $\bar{u}\in\ln\tanh\frac{\Omega}{2}$, then $\bar{K}$ is realized by $\bar{u}$ and hence is admissible.
\end{theorem}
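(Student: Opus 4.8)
The plan is to recognize the extended prescribed flow (\ref{Def-hyper-flow-u-extended-prescribed}) as the negative gradient flow of the prescribed potential $\widetilde{F}_p$, and then to mirror, step by step, the proof of Theorem \ref{Thm-converge-zhengwen}. First I would record the basic identity $\nabla_u\widetilde{F}_p=\widetilde{K}-\bar{K}$, obtained by differentiating $\widetilde{F}_p$ and using $\partial\widetilde{W}_{ijk}/\partial u_i=\tilde{\theta}_i^{jk}$ from Lemma \ref{lemma-luo-essential}; thus $u_i'=\bar{K}_i-\widetilde{K}_i=-\partial\widetilde{F}_p/\partial u_i$ and $\frac{d}{dt}\widetilde{F}_p(u(t))=-\|\widetilde{K}-\bar{K}\|^2\le 0$. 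Since $\widetilde{F}_p$ differs from $\widetilde{F}$ only by a linear term, their Hessians coincide, so by Lemma \ref{prop-hessF} the potential $\widetilde{F}_p$ is convex on $\mathds{R}^N_{<0}$ and its Hessian is positive definite on $\ln\tanh\frac{\Omega}{2}$.

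For the forward direction, suppose $\bar{K}$ is admissible and realized by $\bar{r}\in\Omega$ with $u$-coordinate $\bar{u}$. Since $\widetilde{K}=K$ on $\ln\tanh\frac{\Omega}{2}$, we have $\nabla_u\widetilde{F}_p(\bar{u})=K(\bar{r})-\bar{K}=0$, so $\bar{u}$ is a critical point. I would then repeat the argument of Proposition \ref{Prop-F-tura-infinity} verbatim: for each $\xi\in\mathbb{S}^{N-1}$ the function $\varphi_\xi(t)=\widetilde{F}_p(\bar{u}+t\xi)$ satisfies $\varphi_\xi'(0)=0$ and is strictly convex for small $t$ by positive definiteness of the Hessian at $\bar{u}$, so $\varphi_\xi'$ is eventually positive and $\varphi_\xi(t)\to+\infty$ whenever $a_\xi=+\infty$; Lemma \ref{lemma-gotoinfinity} then yields properness, $\lim_{u\to\infty}\widetilde{F}_p(u)=+\infty$. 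Because $\widetilde{F}_p(u(t))$ is non-increasing along the flow, $\|u(t)\|$ stays uniformly bounded, so every $r_i(t)$ has a uniform positive lower bound. Moreover, admissibility forces $\bar{K}_i=2\pi-\sum\theta_i^{jk}<2\pi$, so the upper bound of Proposition \ref{Prop-uniform-up-bound} applies and the orbit $\{r(t)\}$ lies in a compact subset of $\mathds{R}^N_{>0}$.

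Next I would extract a sequence $t_n\uparrow+\infty$ with $u(t_n)\to u_\infty$ in this compact set. Since $\widetilde{F}_p$ is decreasing and bounded below there, $\int_0^{\infty}\|\widetilde{K}-\bar{K}\|^2\,dt<\infty$, which forces $\widetilde{K}(u_\infty)=\bar{K}$ along a subsequence. By Theorem \ref{thm-unique-admissible-curvature}, the only metric in $\mathds{R}^N_{>0}$ realizing $\bar{K}$ is $\bar{r}\in\Omega$; hence $u_\infty=\bar{u}\in\ln\tanh\frac{\Omega}{2}$. The linearization of (\ref{Def-hyper-flow-u-extended-prescribed}) at $\bar{u}$ is $-\mathrm{hess}\,\widetilde{F}_p|_{\bar{u}}=-L$, which is negative definite, so $\bar{u}$ is asymptotically stable by Theorem \ref{Thm-0curv-metric-exist-imply-converg-prescribed}(2); once $u(t_n)$ enters its basin the whole orbit converges to $\bar{u}$ exponentially fast. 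For the converse, if $u(t)\to\bar{u}\in\ln\tanh\frac{\Omega}{2}$, then passing to the limit in $u_i'=\bar{K}_i-\widetilde{K}_i$ shows $\bar{u}$ is a rest point, so $\widetilde{K}(\bar{u})=\bar{K}$; since $\widetilde{K}=K$ there, $\bar{K}=K(\bar{r})\in K(\Omega)$ is admissible and realized by $\bar{u}$.

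I expect the main obstacle to be guaranteeing that the subsequential limit $u_\infty$ lands inside $\ln\tanh\frac{\Omega}{2}$ rather than on the degenerate ``triangle inequality invalid'' boundary, where $\widetilde{F}_p$ fails to be $C^2$ and the Hessian argument breaks down. This is exactly where the global rigidity statement of Theorem \ref{thm-unique-admissible-curvature} is indispensable: it pins down the unique realizing metric in $\mathds{R}^N_{>0}$ to be the nondegenerate $\bar{r}\in\Omega$, so the flow cannot stall at a degenerate configuration, after which the local exponential stability at $\bar{u}$ delivers the stated convergence rate.
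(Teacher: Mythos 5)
Your proposal is correct and follows exactly the route the paper intends (the paper omits this proof, stating it is obtained by "a similar analysis" of the prescribed potential $\widetilde{F}_p$, mirroring Proposition \ref{Prop-F-tura-infinity}, Theorem \ref{thm-unique-zero-curvature} and Theorem \ref{Thm-converge-zhengwen}): properness of $\widetilde{F}_p$ via convexity and the critical point $\bar{u}$, monotonicity along the flow giving compactness of the orbit, a subsequential limit pinned to $\bar{u}$ by the rigidity Theorem \ref{thm-unique-admissible-curvature}, and local exponential stability from the positive definite Hessian. No gaps worth flagging.
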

Since all these results are similar with previous sections, we omit their proofs here.

\section{Degeneration of inversive distance circle patterns}\label{section-degenerate-pattern}
We study $K(\Omega)$ in this section. If the inversive distance $I\in[0,1]$, which is equivalent to say Andreev-Thurston's circle pattern with weight $\Phi\in[0,\frac{\pi}{2}]$, the classical Andreev-Thurston's theorem (see \cite{Andreev2, Andreev1, T1, Marden-Rodin, Colindev} for details) described the shape of $K(\Omega)$ completely. They show that all admissible curvatures form an open convex polytope in $\mathds{R}^N$. A nice expression of Andreev-Thurston's theorem can be found in Theorem 1 of \cite{Guoren}. For more general inversive distance $I\geq0$, the last paragraph of \cite{Ge-Jiang1} shows that there are intrinsic difficulties, caused by non-coherence of ``triangle inequality invalid boundary", to describe $K(\Omega)$ completely. We give a little results here, which generalize Andreev-Thurston's theorem to some extent.

\subsection{Combinatorial and topological obstructions} For any nonempty proper subset $A\subset V$, let $F_A$ be the subcomplex whose vertices are in $A$ and let $Lk(A)$ be the set of pairs $(e, v)$ of an edge $e$ and a vertex $v$ satisfying the following three conditioins: (1) The end points of $e$ are not in $A$; (2) $v$ is in $A$; (3) $e$ and $v$ form a triangle. For any nonempty proper subset $A\subset V$, in \cite{Ge-Jiang1} we had defined
\begin{equation}\label{def-Y-A}
Y_A\triangleq\Big\{x\in \mathds{R}^N \Big|\sum_{i\in A}x_i >-\sum_{(e,v)\in Lk(A)}\big(\pi-\Lambda(I_e)\big)+2\pi\chi(F_A)\Big\},
\end{equation}
where $\Lambda$ is the auxiliary function (\ref{def-big-lamda}). Similar to Theorem 5.4 in \cite{Ge-Jiang1}, we have
\begin{theorem}\label{Thm-2}
Given $(M, \mathcal{T})$ with inversive distance $I\geq 0$ in hyperbolic background geometry. For any nonempty proper subset $A\subset V$,
$\sum\limits_{i\in A}K_i >-\sum\limits_{(e,v)\in Lk(A)}\big(\pi-\Lambda(I_e)\big)+2\pi\chi(F_A).$
\end{theorem}

\begin{proof}
We first prove in a single triangle $\{ijk\}\in F$, for all $(r_i,r_j,r_k)\in\Delta_{ijk}$, $0<\theta_i(r_i,r_j,r_k)<\pi-\Lambda(I_{jk})$.
For this, we just need to prove for any fixed $\bar{r}_j, \bar{r}_k\in (0,+\infty)$, $\theta_i(r_i, \bar{r}_j, \bar{r}_k)<\pi-\Lambda(I_{jk})$. Let
\begin{equation*}
\begin{aligned}
f(r_i)=\cosh^{-1}(\cosh r_i\cosh \bar{r}_j+I_{ij}\sinh r_i\sinh\bar{r}_j)+&\cosh^{-1}(\cosh r_i\cosh \bar{r}_k+I_{ik}\sinh r_i\sinh\bar{r}_k)\\
-&\cosh^{-1}(\cosh \bar{r}_j\cosh \bar{r}_k+I_{jk}\sinh \bar{r}_j\sinh\bar{r}_k)
\end{aligned}
\end{equation*}
It's easy to see $f(+\infty)=+\infty$ and $f'(r_i)>0$. Moreover, if $I_{jk}>1$, then $f(0)<0$. If $I_{jk}=1$, then $f(0)=0$. While if $0\leq I_{jk}<1$, then $f(0)>0$. Obviously, if $I_{jk}>1$, then equation $f(r_i)=0$ has an unique positive solution $\bar{r}_i$. If $I_{jk}\in[0,1]$, set $\bar{r}_i=0$. On one hand, by the law of cosines in hyperbolic geometry,
$$\cos \theta_i=\frac{\cosh l_{ij}\cosh l_{ik}-\cosh l_{jk}}{\sinh l_{ij}\sinh l_{ik}},$$
and taking limit, by careful calculation, we get $\lim\limits_{r_i\rightarrow \bar{r}_i}\theta_i(r_i, \bar{r}_j, \bar{r}_k)=\pi-\Lambda(I_{jk})$ and $\lim\limits_{r_i\rightarrow +\infty}\theta_i(r_i, \bar{r}_j, \bar{r}_k)=0$.
On the other hand, by Guo's Lemma \ref{lemma-guoren-define-W}, $r_i\partial \theta_i/\partial r_i=\partial \theta_i/\partial u_i<0$, implying that $\theta_i$ is a strictly decreasing function of $r_i$. Hence $0<\theta_i(r_i, \bar{r}_j, \bar{r}_k)<\pi-\Lambda(I_{jk})$, and then for all $(r_i,r_j,r_k)\in\Delta_{ijk}$, we get $0<\theta_i<\pi-\Lambda(I_{jk})$.

Next we follow the approach pioneered by Marden and Rodin \cite{Marden-Rodin} to finish the proof. Consider all the triangles in $F$ having a vertex in $A$. These triangles can be classified into three types $A_1$, $A_2$ and $A_3$. For each $i\in\{1,2,3\}$, a triangle is in $A_i$ if and only if it has exactly $i$ many vertices in $A$. By what is proved above, $\sum\limits_{i\in A, \{ijk\}\in A_1} \theta_i^{jk}<\sum\limits_{(e,v)\in Lk(A)}\big(\pi-\Lambda(I_e)\big)$. Also note $\theta_i^{jk}+\theta_j^{ik}<\theta_i^{jk}+\theta_j^{ik}+\theta_k^{ij}<\pi$, then it follows
\begin{equation*}
\begin{aligned}
\sum_{i\in A}K_i=&\sum_{i\in A}\Big(2\pi-\sum_{\{ijk\} \in F}\theta_i^{jk}\Big)=2\pi|A|-\sum_{i\in A}\sum_{\{ijk\} \in F}\theta_i^{jk}\\
=&2\pi|A|-\Big(\sum_{i\in A, \{ijk\}\in A_1} \theta_i^{jk}+\sum_{i,\;j\in A, \{ijk\}\in A_2} \big(\theta_i^{jk}+\theta_j^{ik}\big)+
\sum_{\{ijk\}\in A_3} \big(\theta_i^{jk}+\theta_j^{ik}+\theta_k^{ij}\big)\Big)\\[6pt]
>&2\pi|A|-\sum_{(e,v)\in Lk(A)}\big(\pi-\Lambda(I_e)\big)-\pi|A_2|-\pi|A_3|\\
=&-\sum_{(e,v)\in Lk(A)}\big(\pi-\Lambda(I_e)\big)+2\pi\Big(|A|-\frac{|A_2|}{2}-\frac{|A_3|}{2}\Big)\\
=&-\sum_{(e,v)\in Lk(A)}\big(\pi-\Lambda(I_e)\big)+2\pi\chi(F_A).
\end{aligned}
\end{equation*}
\end{proof}

\begin{corollary}\label{coroll-curvature-contain-convex-set}
Given $(M, \mathcal{T})$ with inversive distance $I\geq 0$ in hyperbolic background geometry. Then the space of all admissible discrete Gaussian curvatures $K(\Omega)$ is contained in a convex set $\mathop{\bigcap}\limits_{\phi\neq A\subsetneqq V} Y_A$.
\end{corollary}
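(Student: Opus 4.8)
The plan is to derive the corollary as an immediate consequence of Theorem \ref{Thm-2}, which already supplies the crucial pointwise estimate. First I would observe that for each fixed nonempty proper subset $A\subsetneqq V$, the set $Y_A$ defined in (\ref{def-Y-A}) is nothing but an open half-space in $\mathds{R}^N$: its defining condition is the single linear inequality $\sum_{i\in A}x_i > c_A$, where $c_A = -\sum_{(e,v)\in Lk(A)}\big(\pi-\Lambda(I_e)\big) + 2\pi\chi(F_A)$ is a constant depending only on the combinatorics, the topology, and the fixed inversive distance $I$. Since a half-space is convex and an arbitrary intersection of convex sets is convex, the set $\mathop{\bigcap}\limits_{\phi\neq A\subsetneqq V} Y_A$ is convex.

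Next I would invoke Theorem \ref{Thm-2} directly. Let $K\in K(\Omega)$ be any admissible curvature, so that $K = K(r)$ for some $r\in\Omega$. Theorem \ref{Thm-2} asserts precisely that $\sum_{i\in A} K_i > c_A$ for every nonempty proper subset $A\subsetneqq V$. Comparing with the defining inequality of $Y_A$, this says exactly that $K\in Y_A$ for every such $A$, and hence $K\in \mathop{\bigcap}\limits_{\phi\neq A\subsetneqq V} Y_A$. Since $K$ was arbitrary, we conclude $K(\Omega)\subseteq \mathop{\bigcap}\limits_{\phi\neq A\subsetneqq V} Y_A$, which together with the convexity established above yields the corollary.

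There is essentially no hard step here: the entire analytic content is carried by Theorem \ref{Thm-2}, and the corollary is a formal repackaging of that estimate as a containment in a convex region. The only point deserving a moment's care is to confirm that the constant $c_A$ in (\ref{def-Y-A}) coincides term for term with the right-hand side of the inequality in Theorem \ref{Thm-2}, so that membership in $Y_A$ and the theorem's inequality are literally the same assertion; this is immediate from the definitions. I would therefore keep the write-up to a few lines, emphasizing only the half-space/convexity observation and the pointwise application of Theorem \ref{Thm-2}.
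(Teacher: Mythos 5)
Your argument is correct and is exactly the intended one: the paper states this as an immediate corollary of Theorem \ref{Thm-2} without further proof, since each $Y_A$ is by definition an open half-space and membership in $\mathop{\bigcap}_{\phi\neq A\subsetneqq V} Y_A$ is literally the conjunction of the inequalities established in that theorem. No gaps.
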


\begin{corollary}\label{coroll-zero-curvature-comb-topo-condition}
Given $(M, \mathcal{T})$ with inversive distance $I\geq 0$ in hyperbolic background geometry. If there exists a zero curvature metric, then for each nonempty proper subset $A\subset V$, $\sum\limits_{(e,v)\in Lk(A)}\big(\pi-\Lambda(I_e)\big)>2\pi\chi(F_A)$.
\end{corollary}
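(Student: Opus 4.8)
The plan is to obtain this statement as an immediate specialization of Theorem \ref{Thm-2}, whose strict inequality for the partial curvature sums is exactly the information needed. First I would pin down the hypothesis: the phrase ``there exists a zero curvature metric'' is understood as the existence of a genuine packing metric $r^*\in\Omega$ (so that the inner angles $\theta_i^{jk}$, and hence the curvatures $K_i$, are defined in the classical sense rather than through the extension $\widetilde{K}_i$) with $K_i(r^*)=0$ for every vertex $i\in V$.

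The core step is then purely a matter of substitution. For any fixed nonempty proper subset $A\subset V$, Theorem \ref{Thm-2} applied at the metric $r^*$ gives
\begin{equation*}
\sum_{i\in A}K_i(r^*) > -\sum_{(e,v)\in Lk(A)}\big(\pi-\Lambda(I_e)\big)+2\pi\chi(F_A).
\end{equation*}
Since $K_i(r^*)=0$ for all $i\in A$, the left-hand side vanishes, and transposing the two terms on the right yields precisely
\begin{equation*}
\sum_{(e,v)\in Lk(A)}\big(\pi-\Lambda(I_e)\big) > 2\pi\chi(F_A),
\end{equation*}
which is the desired inequality. As $A$ was an arbitrary nonempty proper subset of $V$, the conclusion holds for every such $A$, completing the argument.

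There is essentially no obstacle here; all of the content resides in Theorem \ref{Thm-2}, and the corollary merely records its specialization to the zero-curvature case. The one point worth flagging is conceptual rather than computational: the strictness of the inequality in Theorem \ref{Thm-2} is exactly what upgrades the necessary condition to a strict ``$>$'' rather than a ``$\geq$'', and it is essential that the hypothesized zero curvature metric lie in $\Omega$, so that every triangle is non-degenerate and the unextended curvature $K$ is the object to which Theorem \ref{Thm-2} applies. Were one only to know of a zero of the extended curvature $\widetilde{K}$ sitting on the ``triangle inequality invalid boundary'', the present argument would instead require an extended analogue of Theorem \ref{Thm-2}, which is beyond what is being asserted in this corollary.
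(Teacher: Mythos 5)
Your argument is correct and is exactly the intended one: the paper states this as an immediate corollary of Theorem \ref{Thm-2} (with no written proof), obtained by evaluating that theorem's strict inequality at the zero curvature metric $r^*\in\Omega$ and transposing. Your remark about needing $r^*\in\Omega$ rather than merely a zero of $\widetilde{K}$ is a fair reading of the hypothesis and consistent with the paper's usage.
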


\begin{lemma}\label{Lemma-limit-xita}
Assume $b, c\in(0, +\infty]$, then in the generalized hyperbolic triangle $\{ijk\}\in F$ which is configured by three circles with non-negative inversive distance $I_{ij}$, $I_{jk}$ and $I_{ik}$,
\begin{equation}
\lim_{(r_i, r_j, r_k)\rightarrow (0,\,b,\,c)}\tilde{\theta}_i(r_i, r_j, r_k)=\pi-\Lambda(I_{jk}).
\end{equation}
\begin{equation}
\lim_{(r_i,r_j,r_k)\rightarrow (0,\,0,\,c)}\tilde{\theta}_k(r_i, r_j, r_k)=0.
\end{equation}
\end{lemma}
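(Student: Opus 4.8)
The plan is to reduce both limits to the explicit formula (\ref{formula-tilde-xita-hyper}) for the generalized angle, and then to evaluate each limit by factoring out the correct dominant scale from the numerator and denominator of the $\Lambda$-argument. Writing $C_a=\cosh r_a$ and $S_a=\sinh r_a$, formula (\ref{formula-inver-dist-hyper}) gives $\cosh l_{ij}=C_iC_j+I_{ij}S_iS_j$ together with its cyclic analogues, and by (\ref{formula-tilde-xita-hyper}) the angle at $v_i$ is $\tilde\theta_i=\Lambda(A_i)$ with
$$A_i=\frac{\cosh l_{ij}\cosh l_{ik}-\cosh l_{jk}}{\sinh l_{ij}\sinh l_{ik}}.$$
Since $\Lambda$ is continuous and $\Lambda(-x)=\pi-\Lambda(x)$, it will suffice to show $A_i\to -I_{jk}$ for the first limit and $A_k\to 1$ for the second.

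For the first limit I would begin by simplifying the numerator of $A_i$. Expanding the product and using $C_i^2-1=S_i^2$ gives
$$\cosh l_{ij}\cosh l_{ik}-\cosh l_{jk}=-I_{jk}S_jS_k+S_i\Big(S_iC_jC_k+I_{ij}I_{ik}S_iS_jS_k+C_i\big(I_{ij}C_kS_j+I_{ik}C_jS_k\big)\Big),$$
so every term other than $-I_{jk}S_jS_k$ carries a factor $S_i$. Dividing through by $S_jS_k$ and writing $C_j/S_j=\coth r_j$, $C_k/S_k=\coth r_k$, the error becomes a sum of terms each containing $S_i$ multiplied by products of $\coth r_j$, $\coth r_k$, $C_i$; since $b,c>0$ these $\coth$'s stay bounded, so the error vanishes and the numerator divided by $S_jS_k$ tends to $-I_{jk}$. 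For the denominator I would establish $\sinh l_{ij}/S_j\to 1$ from the identity $\sinh^2 l_{ij}=S_j^2\big(1+S_i^2\coth^2 r_j+2I_{ij}C_iS_i\coth r_j+I_{ij}^2S_i^2\big)$, whose bracket tends to $1$, and likewise $\sinh l_{ik}/S_k\to 1$. Combining, $A_i\to -I_{jk}$ and hence $\tilde\theta_i\to\Lambda(-I_{jk})=\pi-\Lambda(I_{jk})$.

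The second limit follows the same template with dominant scale $S_k^2$. Here $\cosh l_{ik}\cosh l_{jk}-\cosh l_{ij}=C_iC_jS_k^2+(\text{terms each carrying }S_i\text{ or }S_j)$, so dividing by $S_k^2$ and letting $r_i,r_j\to 0$ with $r_k\to c>0$ forces the numerator to be $\sim S_k^2$; the same denominator estimate gives $\sinh l_{ik}\sinh l_{jk}/S_k^2\to 1$. Thus $A_k\to 1$, and by continuity of $\Lambda$ at $1$ with $\Lambda(1)=0$ we conclude $\tilde\theta_k\to 0$.

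The step I expect to be the main obstacle is that $b$ or $c$ may be $+\infty$, so one cannot simply substitute the limiting radii: indeterminate products such as $S_i\cdot C_j$ appear. The factoring above is designed to dispose of this uniformly. The decisive point is that, because the limiting radii are strictly positive, the ratios $\coth r_j$ and $\coth r_k$ remain bounded along the entire approach (tending to $1$ when a radius tends to $+\infty$), so every error term genuinely tends to zero and the finite and infinite cases are settled by one and the same computation. This mirrors, and slightly strengthens, the boundary angle computation already carried out in the proof of Theorem \ref{Thm-2}.
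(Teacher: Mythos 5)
Your proposal is correct and follows the same route the paper indicates: it starts from the representation $\tilde\theta_i=\Lambda\bigl(\frac{\cosh l_{ij}\cosh l_{ik}-\cosh l_{jk}}{\sinh l_{ij}\sinh l_{ik}}\bigr)$ and evaluates the limits by the ``careful calculations'' that the paper's proof explicitly omits. Your factorization of the numerator (isolating $-I_{jk}S_jS_k$, respectively $C_iC_jS_k^2$) and the $\coth$-boundedness argument correctly handle the case $b=+\infty$ or $c=+\infty$, so the computation is a complete and accurate filling-in of the omitted details.
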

\begin{proof} For any $(r_i,r_j,r_k)\in\mathds{R}^3_{>0}$, we have
\begin{equation*}
\tilde{\theta}_i(r_i,r_j,r_k)=\Lambda\bigg(\frac{\cosh l_{ij}\cosh l_{ik}-\cosh l_{jk}}{\sinh l_{ij}\sinh l_{ik}}\bigg).
\end{equation*}
One can get the conclusion by careful calculations. We omit the tedious but elementary details here.
\end{proof}

\begin{proposition}\label{degenerate}
Given $(M, \mathcal{T})$ with inversive distance $I\geq 0$ in hyperbolic background geometry. Assume there is a sequence of
$r^{(n)}=\big(r_1^{(n)},...,r_N^{(n)}\big)^T\in \mathds{R}^N_{>0}$ and a nonempty proper subset $A\subset V$, so that
$\lim\limits_{n\rightarrow+\infty}r_i^{(n)}=0$ for $i\in A$ and $\lim\limits_{n\rightarrow+\infty}r_i^{(n)}>0$ (may be $+\infty$) for $i\notin A$, then
\begin{equation}\label{limit-singular-behavior}
\lim\limits_{n\rightarrow+\infty}\sum_{i\in \,A}\widetilde{K}_i(r^{(n)})=-\sum_{(e,v)\in Lk(A)}(\pi-\Lambda(I_e))+2\pi\chi(F_A).
\end{equation}
\end{proposition}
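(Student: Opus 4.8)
The plan is to expand $\sum_{i\in A}\widetilde{K}_i$ via the definition (\ref{def-K-tuta}) and to sort the resulting generalized angles exactly as in the proof of Theorem \ref{Thm-2}. Writing
\begin{equation*}
\sum_{i\in A}\widetilde{K}_i=2\pi|A|-\sum_{i\in A}\sum_{\{ijk\}\in F}\tilde{\theta}_i^{jk},
\end{equation*}
only triangles meeting $A$ contribute, and these fall into the three classes $A_1,A_2,A_3$ of triangles with exactly one, two, or three vertices in $A$, contributing respectively $\tilde{\theta}_i$ (one vertex $i\in A$), $\tilde{\theta}_i+\tilde{\theta}_j$ (two vertices $i,j\in A$), and $\tilde{\theta}_i+\tilde{\theta}_j+\tilde{\theta}_k$ (all three in $A$). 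Thus the whole problem reduces to computing the limit of each of these three local quantities along $r^{(n)}$, after which the combinatorial identity $|A|-|A_2|/2-|A_3|/2=\chi(F_A)$ used in Theorem \ref{Thm-2} turns the answer into the right-hand side of (\ref{limit-singular-behavior}).

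Two of the three limits are immediate. For an $A_1$-triangle with $i\in A$ and $j,k\notin A$ we have $r_i^{(n)}\to0$ while $r_j^{(n)},r_k^{(n)}$ tend to positive limits (possibly $+\infty$), so the first formula of Lemma \ref{Lemma-limit-xita} gives $\tilde{\theta}_i^{jk}\to\pi-\Lambda(I_{jk})$; since $\{ijk\}\mapsto(jk,i)$ is a bijection from $A_1$ onto $Lk(A)$, the total $A_1$-contribution tends to $\sum_{(e,v)\in Lk(A)}\big(\pi-\Lambda(I_e)\big)$. For an $A_3$-triangle all three radii tend to $0$, hence all three lengths $l_{ij},l_{jk},l_{ik}\to0$; at small scale the hyperbolic ratio in (\ref{formula-tilde-xita-hyper}) converges to the Euclidean ratio in (\ref{formula-tilde-xita-euclid}) and $\Lambda$ is continuous, so each generalized hyperbolic angle differs from the corresponding generalized Euclidean angle by $o(1)$. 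As the three generalized Euclidean angles of any configuration in $\mathds{R}^3_{>0}$ sum to exactly $\pi$ (either a genuine triangle, or a degenerate one with angles $\pi,0,0$), the $A_3$-contribution tends to $\pi|A_3|$.

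The real work is the $A_2$ case, with $i,j\in A$ (so $r_i^{(n)},r_j^{(n)}\to0$) and $k\notin A$ (so $r_k^{(n)}\to c\in(0,+\infty]$), where I must show $\tilde{\theta}_i+\tilde{\theta}_j\to\pi$. The obstruction is that the individual angles need not converge --- their limits depend on the ratio $r_i^{(n)}/r_j^{(n)}$ --- and the triangle may oscillate between the genuine and the degenerate regimes, where the angle sum behaves differently; so one cannot simply invoke a single limit. The device is to work with the sum. Writing $A_i$ and $A_j$ for the two cosine-type arguments of $\Lambda$ in (\ref{formula-tilde-xita-hyper}), a direct expansion using $\cosh l_{ij}-1\sim\tfrac12(r_i^2+r_j^2+2I_{ij}r_ir_j)=\tfrac12 l_{ij}^2\to0$ and $l_{ik},l_{jk}\to r_k$ gives
\begin{equation*}
A_i=\frac{\sinh r_k\,(I_{ik}r_i-I_{jk}r_j)+\cosh r_k\,(r_i^2+I_{ij}r_ir_j)}{l_{ij}\sinh r_k}+o(1),
\end{equation*}
and the symmetric expression for $A_j$; the two $\sinh r_k$-terms are opposite and cancel in $A_i+A_j$, while the $\cosh r_k$-terms combine to $\cosh r_k\,l_{ij}^2$, so that $A_i+A_j=l_{ij}\coth r_k+o(1)\to0$ (this stays valid when $c=+\infty$, since $\coth r_k$ remains bounded, and $A_i,A_j$ themselves stay bounded because $l_{ij}\gtrsim\max(r_i,r_j)$). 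Hence, using $\tilde{\theta}_i=\Lambda(A_i)$, $\tilde{\theta}_j=\Lambda(A_j)$, the identity $\Lambda(-x)=\pi-\Lambda(x)$, and the uniform continuity of $\Lambda$ on $\mathds{R}$, we get $\tilde{\theta}_i+\tilde{\theta}_j=\Lambda(A_i)+\Lambda\big(-A_i+o(1)\big)=\pi+o(1)\to\pi$, regardless of the regime. (Softly, this is the statement that the thin sliver collapses: $\tilde{\theta}_k\to0$ by the second formula of Lemma \ref{Lemma-limit-xita} and the triangle's area tends to $0$, forcing $\tilde{\theta}_i+\tilde{\theta}_j\to\pi$.) Collecting the $A_1,A_2,A_3$ limits and applying the Euler-characteristic identity yields (\ref{limit-singular-behavior}).
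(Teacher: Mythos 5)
Your proposal is correct and follows essentially the same route as the paper: the same decomposition of triangles meeting $A$ into the classes $A_1,A_2,A_3$, the first limit of Lemma \ref{Lemma-limit-xita} for the $A_1$-contribution, the angle-sum limits $\pi$ for $A_2$ and $A_3$ triangles, and the same Euler-characteristic identity at the end. The only difference is that the paper simply asserts the $A_2$ and $A_3$ limits from the geometric picture (degeneration to a geodesic segment, resp.\ collapse to a point), whereas you supply the explicit cancellation $A_i+A_j=l_{ij}\coth r_k+o(1)$ that justifies the delicate $A_2$ case; this is a welcome filling-in of detail, not a different proof.
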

\begin{proof}
If $\{ijk\}\in A_2$, then by Lemma \ref{Lemma-limit-xita}, $\tilde{\theta}_i^{jk(n)}\rightarrow\pi-\Lambda(I_e)$. If $\{ijk\}\in A_2$, then the generalized hyperbolic triangle degenerates to a geodesic segment and hence $\tilde{\theta}_i^{jk(n)}+\tilde{\theta}_j^{ik(n)}\rightarrow \pi$. If $\{ijk\}\in A_3$, then the generalized hyperbolic triangle is shrinking to a point and hence $\tilde{\theta}_i^{jk(n)}+\tilde{\theta}_j^{ik(n)}+\tilde{\theta}_k^{ij(n)}\rightarrow \pi$ too. Therefore,
\begin{equation*}
\begin{aligned}
\sum_{i\in A}\widetilde{K}_i^{(n)}
=\;&2\pi|A|-\Big(\sum_{i\in A, \{ijk\}\in A_1} \tilde{\theta}_i^{jk(n)}+\sum_{i,\;j\in A, \{ijk\}\in A_2} \big(\tilde{\theta}_i^{jk}+\tilde{\theta}_j^{ik}\big)^{(n)}+
\sum_{\{ijk\}\in A_3} \big(\tilde{\theta}_i^{jk}+\tilde{\theta}_j^{ik}+\tilde{\theta}_k^{ij}\big)^{(n)}\Big)\\[6pt]
\rightarrow&2\pi|A|-\sum_{(e,v)\in Lk(A)}\big(\pi-\Lambda(I_e)\big)-|A_2|\pi-|A_3|\pi=-\sum_{(e,v)\in Lk(A)}\big(\pi-\Lambda(I_e)\big)+2\pi\chi(F_A).
\end{aligned}
\end{equation*}
\end{proof}

\begin{theorem}\label{com-inequi-1}
Given $(M, \mathcal{T})$ with inversive distance $I\geq 0$ in hyperbolic background geometry. Then the space of all possible extended curvatures $\widetilde{K}(\mathds{R}^N_{>0})\subset\mathop{\bigcap}\limits_{\phi\neq A\subsetneqq V} \overline{Y_A}$. ($\overline{Y_A}$ is the closer of $Y_A$)
\end{theorem}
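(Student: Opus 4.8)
The plan is to run the proof of Theorem~\ref{Thm-2} at the level of the combinatorial summation, but with every genuine inner angle $\theta$ replaced by its generalized counterpart $\tilde{\theta}$ and every strict inequality relaxed to a non-strict one. It is worth noting why the obvious shortcut does not work: Corollary~\ref{coroll-curvature-contain-convex-set} already gives $K(\Omega)\subset\bigcap_A Y_A\subset\bigcap_A\overline{Y_A}$, and $\widetilde{K}$ is continuous on $\mathds{R}^N_{>0}$ with $\widetilde{K}|_\Omega=K$, so if $\Omega$ were dense in $\mathds{R}^N_{>0}$ we could conclude by continuity alone. But as soon as some $I_e>1$ the ``triangle inequality invalid'' region acquires non-empty interior, $\Omega$ fails to be dense, and a direct estimate valid on \emph{all} of $\mathds{R}^N_{>0}$ becomes unavoidable.

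The first and only genuinely new step is the single-triangle bound
$$0\leq\tilde{\theta}_i(r_i,r_j,r_k)\leq\pi-\Lambda(I_{jk}),\qquad(r_i,r_j,r_k)\in\mathds{R}^3_{>0},$$
a non-strict, globally valid version of the first claim in the proof of Theorem~\ref{Thm-2}. The lower bound holds since $\tilde{\theta}_i\in[0,\pi]$. For the upper bound I treat the three mutually exclusive possibilities (at most one of the degenerate inequalities $l_a\geq l_b+l_c$ can hold, since adding two of them would force a length to be nonpositive). On $\Delta_{ijk}$ the triangle is genuine and $\tilde{\theta}_i=\theta_i<\pi-\Lambda(I_{jk})$ by Theorem~\ref{Thm-2}. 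If $l_{ij}\geq l_{ik}+l_{jk}$ or $l_{ik}\geq l_{ij}+l_{jk}$, the straight angle falls at $v_k$ or $v_j$, so $\tilde{\theta}_i=0$ and there is nothing to prove. The delicate case is $l_{jk}\geq l_{ij}+l_{ik}$, where $\tilde{\theta}_i=\pi$: here I reuse the strictly increasing function $f(r_i)=l_{ij}+l_{ik}-l_{jk}$ from Theorem~\ref{Thm-2}, whose value at $r_i=0$ is $r_j+r_k-l_{jk}$; a short computation gives $f(0)<0\Leftrightarrow I_{jk}>1$, so $f(r_i)\leq 0$ for some $r_i>0$ forces $I_{jk}>1$, hence $\Lambda(I_{jk})=0$ and $\tilde{\theta}_i=\pi\leq\pi-\Lambda(I_{jk})$. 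Verifying that the straight-angle degeneration is compatible with the bound precisely because it can occur only when $\Lambda(I_{jk})=0$ is the heart of the argument and the step I expect to demand the most care.

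With this estimate, the combinatorial summation of Theorem~\ref{Thm-2} (in the spirit of Marden--Rodin) carries over with $\geq$ in place of $>$. Fix a nonempty proper $A\subsetneqq V$ and $r\in\mathds{R}^N_{>0}$, and split the triangles meeting $A$ into the types $A_1,A_2,A_3$. For $A_1$ the single-triangle bound together with the bijection between $A_1$-triangles and pairs in $Lk(A)$ gives $\sum_{i\in A,\,\{ijk\}\in A_1}\tilde{\theta}_i^{jk}\leq\sum_{(e,v)\in Lk(A)}(\pi-\Lambda(I_e))$; for $A_2$ and $A_3$ the generalized angle-sum inequality $\tilde{\theta}_i+\tilde{\theta}_j+\tilde{\theta}_k\leq\pi$ (equality permitted in the degenerate cases) yields $\tilde{\theta}_i^{jk}+\tilde{\theta}_j^{ik}\leq\pi$ and $\tilde{\theta}_i^{jk}+\tilde{\theta}_j^{ik}+\tilde{\theta}_k^{ij}\leq\pi$. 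Combining these exactly as in Theorem~\ref{Thm-2} and using $\chi(F_A)=|A|-|A_2|/2-|A_3|/2$ gives
$$\sum_{i\in A}\widetilde{K}_i(r)\geq-\sum_{(e,v)\in Lk(A)}(\pi-\Lambda(I_e))+2\pi\chi(F_A),$$
that is, $\widetilde{K}(r)\in\overline{Y_A}$. Since $A$ and $r$ were arbitrary, $\widetilde{K}(\mathds{R}^N_{>0})\subset\bigcap_{\emptyset\neq A\subsetneqq V}\overline{Y_A}$, as claimed.
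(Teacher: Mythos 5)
Your proposal takes essentially the same route as the paper: reduce the claim to the single-triangle estimate $0\leq\tilde{\theta}_i\leq\pi-\Lambda(I_{jk})$ valid on all of $\mathds{R}^3_{>0}$, and then rerun the Marden--Rodin summation of Theorem~\ref{Thm-2} with non-strict inequalities. The paper states exactly this reduction but omits the details (deferring to Theorem 5.8 of \cite{Ge-Jiang1}); your case analysis --- in particular the observation that the degenerate case $\tilde{\theta}_i=\pi$ can only occur when $I_{jk}>1$, whence $\Lambda(I_{jk})=0$ and the bound is not violated --- correctly supplies them.
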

\begin{proof}
We need to prove that for every $r\in \mathds{R}^N_{>0}$, the extended curvature $\widetilde{K}$ satisfies
\begin{equation*}
\sum_{i\in \,A}\widetilde{K}_i(r)\geq-\sum_{(e,v)\in Lk(A)}(\pi-\Lambda(I_e))+2\pi\chi(F_A)
\end{equation*}
for each nonempty proper subset $A\subset V$. For this, we just need to prove that in a single triangle $\{ijk\}\in F$, for all $(r_i,r_j,r_k)\in\mathds{R}^3_{>0}$, $0\leq\tilde{\theta}_i\leq\pi-\Lambda(I_{jk})$. Since the methods are almost the same with proof of Theorem 5.8 in \cite{Ge-Jiang1}, we omit the details here.
\end{proof}

Consider a triangle $\{ijk\}\in F$ that is configured by three circles with three fixed non-negative numbers $I_{ij}$, $I_{jk}$ and $I_{ik}$ as inversive distances. Let $\theta_i$, $\theta_j$ and $\theta_k$ be the three inner angles. For this (a single triangle) case, similar to Theorem 5.9 in \cite{Ge-Jiang1}, we can determine the shape of $K(\Omega)$ (in this case $\Omega=\Delta_{ijk}$) completely.
\begin{theorem}\label{thm-angle-range-one-triangle}
$(\theta_i, \theta_j, \theta_k)$ is a diffeomorphism from $\Delta_{ijk}$ to $Z$, where
\begin{equation*}\label{angle-range-asingletriangle}
Z=\left\{(\theta_i, \theta_j, \theta_k)\in\mathds{R}^3\,\big|\,\theta_i+\theta_j+\theta_k<\pi; \,0<\theta_{i'}<\pi-\Lambda(I_{j'k'}),\,\forall\, \{i',j',k'\}=\{i,j,k\}\right\}.
\end{equation*}
\end{theorem}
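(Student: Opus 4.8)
The plan is to show that the angle map $\Theta\triangleq(\theta_i,\theta_j,\theta_k)$ is a proper local diffeomorphism from the connected set $\Delta_{ijk}$ onto the convex (hence connected and simply connected) set $Z$, and then to conclude by a covering-space argument that $\Theta$ is a global diffeomorphism.

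First I would check that $\Theta$ is a local diffeomorphism. By Lemma \ref{lemma-guoren-define-W} the Jacobian of $(\theta_i,\theta_j,\theta_k)$ with respect to $(u_1,u_2,u_3)$ is symmetric and negative definite, hence nonsingular on $\ln\tanh\frac{\Delta_{ijk}}{2}$; since $r_i\mapsto u_i=\ln\tanh\frac{r_i}{2}$ is a diffeomorphism with diagonal nonsingular Jacobian, the Jacobian of $\Theta$ in the $r$-variables is nonsingular everywhere, and the inverse function theorem applies. Next I would verify $\Theta(\Delta_{ijk})\subseteq Z$: the inequality $\theta_i+\theta_j+\theta_k<\pi$ is the positivity of the angle defect of a nondegenerate hyperbolic triangle, while the bounds $0<\theta_{i'}<\pi-\Lambda(I_{j'k'})$ are exactly what was established in the first paragraph of the proof of Theorem \ref{Thm-2}. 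These are precisely the inequalities defining $Z$.

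The crux is to prove that $\Theta\colon\Delta_{ijk}\to Z$ is proper, equivalently that if $r^{(n)}$ leaves every compact subset of $\Delta_{ijk}$ then $\Theta(r^{(n)})$ tends to $\partial Z$. Passing to subsequences so that each $r_{i'}^{(n)}$ converges in $[0,+\infty]$, the escape falls into one of the following types, in each of which I would match the limit to a face of $\partial Z$: if some $r_{i'}^{(n)}\to+\infty$ then $\theta_{i'}^{(n)}\to 0$ by Lemma \ref{Lemma-edge-big-then-angle-zero}; if $r_{i'}^{(n)}\to 0$ while the other two radii stay bounded below then $\theta_{i'}^{(n)}\to\pi-\Lambda(I_{j'k'})$ by the first limit in Lemma \ref{Lemma-limit-xita}; if two radii tend to $0$ with the third bounded below then the angle at the third vertex tends to $0$ by the second limit in Lemma \ref{Lemma-limit-xita}; if all three radii tend to $0$ then all edge lengths tend to $0$, the triangle becomes asymptotically Euclidean, and $\theta_i+\theta_j+\theta_k\to\pi$; and if the radii remain in a compact subset of $(0,+\infty)^3$ but $r^{(n)}$ approaches $\partial\Delta_{ijk}$, the limiting generalized triangle is degenerate with one angle tending to $\pi$ and the other two to $0$, so again $\theta_i+\theta_j+\theta_k\to\pi$. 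In every case a defining inequality of $Z$ becomes tight, so no subsequence of $\Theta(r^{(n)})$ can converge inside $Z$, proving properness.

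Finally, a proper local diffeomorphism into the locally compact Hausdorff space $Z$ is a closed map with open image, so $\Theta(\Delta_{ijk})$ is both open and closed in the connected set $Z$ and therefore equals $Z$; moreover a proper local diffeomorphism onto a connected space is a covering map, and a connected covering of the simply connected $Z$ has a single sheet. Hence $\Theta$ is a bijective local diffeomorphism, i.e. a diffeomorphism of $\Delta_{ijk}$ onto $Z$. I expect the properness step to be the main obstacle, since one must treat all the mixed and corner degenerations and confirm that each is governed by the degeneration lemmas above.
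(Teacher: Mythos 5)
Your argument is correct, but note that the paper does not actually prove Theorem \ref{thm-angle-range-one-triangle}: it is stated without proof, with a pointer to the analogous Euclidean result (Theorem 5.9 of \cite{Ge-Jiang1}), where the standard route is to get injectivity from the (local-to-global) rigidity of the angle map and then obtain surjectivity onto $Z$ by an openness-plus-boundary analysis. Your proposal reorganizes this into a single topological argument: local diffeomorphism from the negative definiteness of the Jacobian in Lemma \ref{lemma-guoren-define-W}, containment $\Theta(\Delta_{ijk})\subseteq Z$ from the first paragraph of the proof of Theorem \ref{Thm-2} plus the angle-defect inequality, and then properness via the exhaustive subsequence analysis (radius $\to\infty$ handled by Lemma \ref{Lemma-edge-big-then-angle-zero}, radii $\to 0$ by Lemma \ref{Lemma-limit-xita}, total collapse by the vanishing of the hyperbolic area so that $\theta_i+\theta_j+\theta_k\to\pi$, and degeneration of the triangle inequality by continuity of the extended angles), concluding with the covering-space argument over the convex, hence simply connected, $Z$ and the connectedness of $\Delta_{ijk}$ guaranteed by Lemma \ref{lemma-guoren-define-W}. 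This buys you global injectivity for free, without invoking Guo--Luo rigidity or the convexity extension of $W_{ijk}$, at the cost of having to check that every mode of escape from $\Delta_{ijk}$ forces the image to leave every compact subset of $Z$; your case list is exhaustive after passing to subsequences converging in $[0,+\infty]$, so the argument is complete.
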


\subsection{Maximum principle}\label{section-max-principle} The maximum principle is very useful in geometric analysis and other fields. Its discrete version was first applied in combinatorial curvature flows by Chow-Luo \cite{CL1} and Glickenstein \cite{Glickenstein2}. Ge-Xu \cite{Ge-Xu3,Ge-Xu4} systematically developed the theory of discrete maximum principle and used it to study the combinatorial Yamabe problem, that is, the existence of constant curvature metrics. Using this, Ge-Xu proved that there exists a zero (or constant) curvature metric if and only if there exists a metric with non-negative curvatures in the hyperbolic (or Euclidean) background geometry. Note that by Andreev, Thurston and other pioneer's work for classical $K$-curvature \cite{Andreev2,Andreev1,T1,Marden-Rodin,Colindev,Ri,Hezhengxu1,Bobenko}, and by Ge-Xu's recent work for newly defined $R$-curvature \cite{Ge-Xu3,Ge-Xu1,Ge-Xu4}, the existence of constant ($K$- or $R$-) curvature metrics, or non-negative curvature metrics are determined by the topological and combinatorial structures of $(M,\mathcal{T})$. It seems amazing that discrete maximum principle can be used to extract topological information of $M$ and combinatorial information of the triangulation $\mathcal{T}$.

Let the background geometry is hyperbolic, we consider Chow and Luo's combinatorial Ricci flow, which is equivalent to require the inversive distance $0\leq I \leq 1$. In this case, $\Omega=\mathds{R}^N_{>0}$, and the curvature $K_i(t)$ evolves according to
\begin{equation}\label{formula-k-evolve}
\frac{dK_i}{dt}=\sum_{j\thicksim i}C_{ij}(K_j-K_i)-B_iK_i,
\end{equation}
where both $C_{ij}$ and $B_i$ are positive. Set $M(t)=max\big\{K_1(t),\cdots,K_N(t),0\big\}$, $m(t)=min\big\{K_1(t),\cdots,K_N(t),0\big\}$. Using the maximum principle, Chow-Luo (corollary 3.3, \cite{CL1}) proved that $M(t)$ is non-increasing while $m(t)$ is non-decreasing. We can show even more.
\begin{theorem}\label{thm-negative-imply-zero}
Given $(M,\mathcal{T})$ with inversive distance $0\leq I\leq1$ in hyperbolic background
geometry. If there exists a metric with non-positive curvatures, then there exist a zero curvature metric.
\end{theorem}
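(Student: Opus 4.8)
The plan is to run the flow (\ref{Def-ChowLuo-flow-hyper}) starting from a metric with non-positive curvatures and to show, via the maximum principle, that this sign is preserved, which then forces the radii to increase monotonically to a zero curvature limit. First, since $0\le I\le 1$ we are exactly in Andreev--Thurston's circle pattern regime, so $\Omega=\mathds{R}^N_{>0}$ and the flow exists for all time. Let $r(0)$ be a metric with $K_i(0)\le 0$ for every $i\in V$, so that $M(0)=\max\{K_1(0),\dots,K_N(0),0\}=0$. By Chow--Luo's maximum principle recalled above, $M(t)$ is non-increasing along (\ref{formula-k-evolve}), whence $M(t)\le M(0)=0$; on the other hand $M(t)\ge 0$ by definition. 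Therefore $M(t)\equiv 0$, i.e. $K_i(t)\le 0$ for all $i\in V$ and all $t\ge 0$. This preservation of sign is the heart of the argument.

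Next I would exploit this to control the radii. From ${r_i}'(t)=-K_i\sinh r_i\ge 0$ each $r_i(t)$ is non-decreasing, so in particular $r_i(t)\ge r_i(0)>0$ for all $t$. Proposition \ref{Prop-uniform-up-bound} supplies a uniform upper bound $r_i(t)\le C$. Hence each $r_i(t)$ stays in the compact interval $[r_i(0),C]\subset(0,+\infty)$; this confines the whole solution to a compact subset of $\Omega$ and reconfirms all-time existence. A bounded monotone function converges, so $r_i(t)\to r_i^\ast\in[r_i(0),C]$ as $t\to+\infty$, and the limit $r^\ast$ lies in the interior $\Omega=\mathds{R}^N_{>0}$ rather than on its boundary precisely because monotonicity furnishes the positive lower bound while Proposition \ref{Prop-uniform-up-bound} furnishes the upper bound.

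Finally, since $r(t)\to r^\ast\in\Omega$, part (1) of Theorem \ref{Thm-0curv-metric-exist-imply converg} immediately identifies $r^\ast$ as a zero curvature metric, which completes the proof. The only genuinely delicate point is the first step: one must verify that the maximum principle of Chow--Luo is available in the present regime $0\le I\le 1$ (it is, since here $\Omega=\mathds{R}^N_{>0}$ and the evolution (\ref{formula-k-evolve}) holds with $C_{ij},B_i>0$) and then read off $M(t)\equiv 0$ from the non-increase of $M$ together with $M(0)=0$. Everything after that is routine monotone convergence combined with the already-established local stability result, so I expect no further obstacle.
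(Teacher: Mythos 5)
Your proposal is correct and follows essentially the same route as the paper: preserve $K_i\le 0$ via Chow--Luo's maximum principle ($M(t)\le M(0)=0$), deduce that the radii are non-decreasing and hence bounded below, invoke Proposition \ref{Prop-uniform-up-bound} for the upper bound, and conclude convergence to a zero curvature metric. The only cosmetic difference is at the end, where you obtain convergence directly from monotone boundedness and then apply Theorem \ref{Thm-0curv-metric-exist-imply converg}(1), whereas the paper cites Proposition 3.7 of \cite{CL1} after establishing compactness; both are valid and your version is slightly more self-contained.
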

\begin{proof}
This theorem was first proved by Ge-Xu in \cite{Ge-Xu4}, where they defined a new $R$-curvature and a new hyperbolic curvature flow. Using a discrete version of maximum principle for $R$-curvature evolutions along the newly defined hyperbolic curvature flow, they finally proved above conclusion. The proof in \cite{Ge-Xu4} is very complicated, and we need to provide a more direct proof here. We still follow \cite{Ge-Xu4}. We deform the metric $r(t)$ according to Chow-Luo's flow $dr_i/dt=-r_iK_i$, beginning from a initial metric $r(0)$ which is exactly the special metric with non-positive curvatures. By Chow-Luo's results, $M(t)\leq M(0)\leq 0$, and hence all $K_i(t)\leq0$. Thus $dr_i/dt\geq 0$ and every $r_i(t)$ is increasing, which implies that all $r_i(t)$ are uniformly bounded below from a positive constant. By Proposition \ref{Prop-uniform-up-bound}, or by Corollary 3.6 in \cite {CL1}, all $r_i(t)$ are uniformly bounded from above. Thus the solution $\{r(t)\}$ lies in a compact region in $\mathds{R}^N_{>0}$. Using Proposition 3.7 in \cite{CL1}, we get the conclusion above.
\end{proof}

The inversive distance $I\in[0,1]$ is equivalent to Andreev-Thurston's circle pattern with weight $\Phi\in[0,\frac{\pi}{2}]$. In this case, the existence of zero curvature metric is purely combinatorial and topological. In fact, Thurston \cite{T1} proved that there exists a metric with zero curvature if and only if the following two combinatorial conditions are satisfied simultaneously: (1) for any three edges $e_1, e_2, e_3$ forming a null homotopic loop in $M$, if $\sum_{i=1}^3\Phi(e_i)\geq\pi$, then $e_1, e_2, e_3$ form the boundary of a triangle in $F$; (2) for any four edges $e_1, e_2, e_3, e_4$ forming a null homotopic loop in $M$, if $\sum_{i=1}^4\Phi(e_i)\geq2\pi$, then $e_1, e_2, e_3, e_4$ form the boundary of the union of two adjacent triangles. The method to prove Proposition \ref{Prop-uniform-up-bound} is valid for all $I\geq 0$, while the maximum principle method to prove $M(t)$ decreasing and Corollary 3.6 in \cite {CL1} is only valid for $0\leq I\leq1$. By an observation of Guo \cite{Guoren}, $C_{ij}=\frac{\partial K_i}{\partial u_j}$ may be negative for general inversive distance $I\geq 0$, hence the maximum principle may not valid any more.

\section{Some questions}\label{section-open-question}
\subsection{Is $\Omega$ simply connected} If so, then we can define the discrete Ricci potential directly as the line integral
\begin{equation}
G(u)\triangleq\int_{c}^u \sum_{i=1}^NK_idu_i,
\end{equation}
where $c\in \ln\tanh\frac{\Omega}{2}$ is arbitrary chosen. Since $\ln\tanh\frac{\Omega}{2}$ is simply connected, and $\frac{\partial K_i}{\partial u_j}=\frac{\partial K_j}{\partial u_i}$, above line integral is well defined  and is independent on the choice of piecewise smooth paths in $\ln\tanh\frac{\Omega}{2}$ from $c$ to $u$. Also, we can introduce
\begin{equation}
\widetilde{G}(u)\triangleq\int_{c}^u \sum_{i=1}^N\widetilde{K}_idu_i,
\end{equation}
which is the $C^1$-smooth extension of $G(u)$. It's easy to see, $G(u)$ differs from $F(u)$ by a constant, while $\widetilde{G}(u)$ differs from $\widetilde{F}(u)$ by a constant. For $M=\{ijk\}\in F$ a single triangle case, $\Omega$ is in fact $\Delta_{ijk}$ and is simply connected non-empty open set by Guo's Lemma \ref{lemma-guoren-define-W}. For general case, we don't know if $\Omega$ is simply connected. We don't know if $\Omega$ is connected or non-empty even worse.

\subsection{Uniqueness of solution to the extended flow} Let $u(t)$ be the solution to the extended flow (\ref{Def-hyper-flow-u-extend}), when ever $u(t)$ lies in $\ln\tanh\frac{\Omega}{2}$, it is unique since $\widetilde{K}_i=K_i$ is smooth and then locally Lipschitz continuous in $\ln\tanh\frac{\Omega}{2}$. However, $\widetilde{K}_i$ is not Lipschitz continuous near the boundary of $\ln\tanh\frac{\Omega}{2}$. So we don't know whether $u(t)$ is unique. If all inversive distance $I_{ij}$ are in $[0,1]$, in this case, $\Omega$ equals to $\mathds{R}^N_{>0}$, and the extended flow is in fact Chow-Luo's flow, hence the solution $u(t)$ is unique in $[0,+\infty)$. We hope $u(t)$ is unique for any inverse distance $I\geq 0$.

\noindent \textbf{Acknowledgements}: The research is supported by National Natural Science Foundation of China under grant (No.11501027), and Fundamental Research Funds for the Central Universities (Nos. 2015JBM103, 2014RC028 and 2016JBM071).

\bibliographystyle{plain}

\end{document}